     \renewcommand{\Im}{{\mathfrak{Im}\,}}
    \newcommand{\fa}{{\mathfrak{a}}} \newcommand{\fb}{{\mathfrak{b}}}
    \renewcommand{\Re}{{\mathfrak{Re}\,}}
\def\-{^{-1}}
\def\'{^{\prime}}
\def\pprime{^{\prime\prime}}
\def\-{^{-1}}
\newcommand{\delete}[1]{}
    \theoremstyle{plain}
\numberwithin{equation}{section}
\newtheorem{thm}{Theorem}[section]
\newtheorem{lem}{Lemma}[section]
\newtheorem{rem}{Remark}[section]
\newtheorem{de}{Definition}[section]
\newtheorem{pro}{Proposition}[section]
\def\({\left( }
\def\){\right) }
\begin{document}

\title{The second moment of Rankin-Selberg L-function and hybrid subconvexity bound}
\author{Zhilin Ye}
\maketitle

\begin{abstract}
Let $M,N$ be coprime square-free integers. Let $f$ be a holomorphic cusp form of level $N$ and $g$ be either a holomorphic or a Maa{\ss} form with level $M$. Using a large sieve inequality, we establish a bound of the form $\sum_{g}\left|L^{(j)}\(1/2+it,f \otimes g\)\right|^2 \ll_t M+M^{2/3-\beta}N^{4/3}$ where $\beta \approx 1/500$. As a consequence, we obtain subconvexity bounds for $L^{(j)}\(1/2+it,f \otimes g\)$ for any $N<M$ satisfying the conditions above without using amplification methods. Moreover, by the symmetry, we establish a level aspect hybrid subconvexity bound for the full range when both forms are holomorphic. 
\end{abstract}

\section{Introduction and Statement of Results}
\label{se1}
\subsection{Introduction}

For an automorphic cuspidal representation $\pi$ with conductor $\mathfrak{Q}$, the generalized Lindel\"of Hypothesis states that $L\(\frac{1}{2},\pi\) \ll_{\epsilon} \mathfrak{Q}^{\epsilon}.$ This bound follows from the generalized Riemann Hypothesis. In many cases, however, the best known bound is the convexity bound
$L\(\frac{1}{2},\pi\) \ll_{\epsilon} \mathfrak{Q}^{\frac{1}{4}+\epsilon}$
which is a consequence of the Phragmen-Lindel\"of convexity principle, the functional equation for $L\(\frac{1}{2},\pi\)$, and Molteni's bound for $L\(\frac{1}{2},\pi\)$ on the line $\Re (s) =1$. \\

The subconvexity problem is to establish a bound of the form
$L\(\frac{1}{2},\pi\) \ll_{\epsilon} \mathfrak{Q}^{\frac{1}{4}-\delta}$
for some positive $\delta$.\\

When $\pi = f \otimes g$ where $f,g$ are both $GL_2$ Hecke cusp forms ($L$-function of $\pi$ is induced by an isobaric representation of $GL_4$ in this case, see \cite{RD}), some authors have successfully established level aspect subconvexity results via the amplification method one form fixed. For example, if $f$ is a Hecke cusp form of a fixed level and $g$ is a Hecke cusp form of a varying level $M$, then various bounds of the form
 $$L\(\frac{1}{2},f \otimes g\) \ll_{f} M^{\frac{1}{2}-\delta}$$
 for some absolute positive constant $\delta$ have been shown by Kowalski-Michel-VanderKam \cite{KMV}, Michel \cite{M1}, and Harcos-Michel \cite{HM1}. Furthermore, the subconvexity bound for two independently varying forms have been established in the works of Michel-Ramakrishnan \cite{MR}, Feigon-Whitehouse \cite{FW}, Nelson \cite{N1} and Holowinsky-Templier \cite{HT1} in situations where positivity of the central $L$-values is known. In addition, Holowinsky-Munshi \cite{HM} proved a hybrid subconvexity bound when the levels of $f$ and $g$ satisfy certain conditions via a second moment estimation. There are also a lot of open questions about the subconvexity problem. The level aspect subconvexity bound for the Rankin-Selberg convolution of two $GL_2$ forms of the same level.\\ 

In this paper, we continue the study of $L$-functions of the Rankin-Selberg convolution of two $GL_2$ cusp forms via a second moment method, and give a hybrid subvonvexity bound when the two levels are coprime and square-free.

\subsection{Main Results}
All the notations and normalizations in this section can be found in Chapter \ref{se11}.

\begin{pro}
\label{thm1}
Let $f$ be a holomorphic (resp. non-exceptional Maa{\ss}) Hecke newform with weight $k>2$ (resp. spectral parameter $t_f$ and weight $0$) and level $N$. Denote by $\lambda_f$ the Hecke eigenvalues of $f$. Let $\mathcal{B}_{\kappa}(M)$ be an orthogonormal basis of holomorphic Hecke eigenforms with weight $\kappa$ and level $M$. Let $h$ be a smooth function supported on $[\frac{1}{2},\frac{5}{2}]$ which satisfies $h^{(j)} \ll Z_h^j$. Assume that $N$ is square-free and $(M,N) =1$. Then we have
\begin{align}
&\sum_{g \in \mathcal{B}_{\kappa}(M)}\omega_g^{-1}\left|\sum_{n}\frac{\lambda_f(n)\lambda_g(n)}{\sqrt{n}}h\(\frac{n}{X}\)\right|^2\\
&\ll_{\epsilon,f,\kappa} \(1+\frac{X}{MN}+\frac{X}{M^{1+\beta}}+\frac{\(1+Z_h\)^{24}N^{4/3}}{M^{1/3+\beta}}\(1+\sqrt{\frac{X}{MN}}\)\)((1+Z_h)XMN)^{\epsilon},
\end{align}
where $\beta = \frac{11}{4875}$ and $\omega_g := \frac{(4\pi)^{\kappa-1}}{\Gamma(\kappa-1)}\Braket{g,g}$.
\end{pro}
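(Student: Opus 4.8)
The plan is to open the square, apply the Petersson trace formula, and reduce everything to an off-diagonal sum which is then dismantled by Voronoi summation on the fixed form $f$ followed by a large sieve inequality. Set $D_g := \sum_n \lambda_f(n)\lambda_g(n)n^{-1/2}h(n/X)$; expanding $\omega_g^{-1}|D_g|^2$ and summing over $g\in\mathcal{B}_\kappa(M)$, the weighted sum $\sum_g \omega_g^{-1}\lambda_g(m)\lambda_g(n)$ equals, by Petersson (in the form appropriate to newforms of square-free level $M$, after a routine M\"obius sieving of the oldform contributions), $\delta_{m=n}$ plus $2\pi i^{-\kappa}\sum_{M\mid c}c^{-1}S(m,n;c)J_{\kappa-1}\!\big(\tfrac{4\pi\sqrt{mn}}{c}\big)$. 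The diagonal contributes $\sum_n \lambda_f(n)^2 n^{-1}h(n/X)^2$, which by the Rankin--Selberg bound $\sum_{n\le Y}\lambda_f(n)^2\ll_f Y$ and the support of $h$ is $\ll_{f,\epsilon}((1+Z_h)XMN)^{\epsilon}$ --- this is the constant term in the stated bound. Everything else is the off-diagonal $\mathcal{O}$.

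For $\mathcal{O}$ I would first truncate the $c$-sum at $c\le(XMN(1+Z_h))^{C_0}$ for a large fixed $C_0$ (the tail is negligible by $J_{\kappa-1}(x)\ll_\kappa x^{\kappa-1}$), and treat the surviving range dyadically, $c\sim C$ with $M\mid c$, using $J_{\kappa-1}(x)\ll_\kappa\min(x^{\kappa-1},x^{-1/2})$. In each dyadic range I open the Kloosterman sum into additive characters $e(\tfrac{md+n\bar d}{c})$ and apply the Voronoi summation formula for $f$ in the variable $n$, splitting $c=c'c''$ with $c'\mid N^\infty$ and $(c'',N)=1$ --- legitimate since $N$ is square-free and $(M,N)=1$ forces $M\mid c''$. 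This turns $\sum_n\lambda_f(n)e(n\bar d/c)V_{c,m}(n)$, with $V_{c,m}(n)=n^{-1/2}h(n/X)J_{\kappa-1}(\tfrac{4\pi\sqrt{mn}}{c})$, into a dual sum over $\tilde n\ll c^2 N(1+Z_h)^{O(1)}(XMN)^{\epsilon}/X$ with prefactor $(c\sqrt N)^{-1}$ and a Hankel/Bessel transform $\widetilde V_{c,m}$ (for Maa\ss\ $f$ the non-exceptionality assumption keeps the Bessel order real), each derivative of $\widetilde V_{c,m}$ costing a factor $O(1+Z_h)$. The residue sum recombines into a Kloosterman sum $S(m,\ast\,\tilde n;c)$ (with $\ast$ an explicit residue built from $N$ and $c'$), so that altogether
\[
\mathcal{O}\ \approx\ \sum_{\substack{M\mid c\\ c\le(XMN(1+Z_h))^{C_0}}}\frac{1}{c^2\sqrt N}\sum_{m,\tilde n}\frac{\lambda_f(m)\lambda_f(\tilde n)}{\sqrt{m}}\,S(m,\ast\,\tilde n;c)\,h\!\Big(\tfrac{m}{X}\Big)\,\widetilde V_{c,m}(\tilde n).
\]

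The decisive step is to estimate this. I would separate the residual coupling between $m$ and $\tilde n$ by a Mellin transform, reopen the Kloosterman sum, and apply Cauchy--Schwarz in the pair $(c,d)$; the two resulting quadratic forms in the additive characters $e(md/c)$ and $e(\ast\,\tilde n\,\bar d/c)$ are then bounded by a large sieve inequality --- the classical large sieve for $\{d/c:M\mid c\sim C,\ (d,c)=1\}$, which contributes factors of size $C^2/M+(\text{relevant length})$, supplemented in the oscillatory ranges of $c$ by a Deshouillers--Iwaniec-type estimate exploiting cancellation over the moduli. The ``trivial'' part of the analysis (Weil's bound plus the Voronoi-shortened length $\tilde n\ll c^2 N/X$ and $\sum_{n\le Y}|\lambda_f(n)|^2\ll_f Y$), after summing the geometric series in $c$ and optimizing over the dyadic ranges, already yields the terms $X/(MN)$ and $N^{4/3}M^{-1/3}\big(1+\sqrt{X/(MN)}\,\big)$, with $(1+Z_h)^{24}$ accumulating from the repeated derivative bounds for $\widetilde V_{c,m}$; the extra saving $M^{-\beta}$ comes from feeding the large sieve --- rather than Weil's bound --- into the critical range and balancing it against the trivial estimate, which is exactly what forces the small explicit value $\beta=\tfrac{11}{4875}$.

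I expect the genuine obstacle to lie in this last step, on two fronts. First, one must carry out the stationary-phase analysis of $\widetilde V_{c,m}$ --- a doubly oscillatory integral, since both the Bessel weight $J_{\kappa-1}(4\pi\sqrt{mn}/c)$ and the archimedean Bessel kernel produced by Voronoi oscillate --- with full uniformity in $c,m,\tilde n,X$ and $Z_h$. Second, one must arrange the Cauchy--Schwarz and the large sieve so that, despite the length mismatches between the $m$- and $\tilde n$-sums, the large sieve genuinely beats the trivial bound by a power of $M$ and not merely by a logarithm; pinning down the exponent $N^{4/3}/M^{1/3}$ and the precise value of $\beta$ will be a matter of optimizing several competing ranges at once rather than of any single sharp inequality, while the new-versus-old form bookkeeping in the Petersson formula for square-free $M$ is a routine but necessary preliminary.
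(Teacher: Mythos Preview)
Your opening is correct and matches the paper: Petersson, Rankin--Selberg for the diagonal, then Voronoi on one variable of the off-diagonal. But from that point on the proposal diverges from what actually works, and the divergence is not cosmetic.

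First, a concrete slip: after you open $S(m,n;c)=\sum^*_d e\big(\tfrac{md+n\bar d}{c}\big)$ and apply Voronoi in $n$, the dual frequency enters as $e(\mp\tilde n\, d\,\overline{N_2}/c)$, so the character sum collapses to the \emph{Ramanujan} sum $S(0,\, m\mp\tilde n\,\overline{N_2};c)$, not to a Kloosterman sum $S(m,\ast\tilde n;c)$ as you write. This matters: the Ramanujan sum opens via $\sum_{e\mid(c,k)}e\mu(c/e)$ into a genuine shifted-convolution condition $m\mp L\tilde n\equiv 0\pmod{e}$ with $L\mid N$, and it is this shifted sum --- not a bilinear Kloosterman sum --- that drives the rest of the argument.

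Second, and more seriously, your endgame of ``Mellin, Cauchy--Schwarz in $(c,d)$, classical large sieve for Farey fractions, plus a Deshouillers--Iwaniec estimate in oscillatory ranges'' does not produce a power saving in $M$. The paper's route after the Ramanujan-sum step is: (i) detect the shift $l_1m-l_2n=hc_0$ by \emph{Jutila's} circle method with moduli $q$ chosen coprime to $N$ and of essentially arbitrary size; (ii) apply Voronoi \emph{twice more}, in both $m$ and $n$, which regenerates Kloosterman sums $S(v\,\overline{l_1l_2N},\,hc_0;q)$ where the crucial feature is that the second entry carries the factor $c_0\mid M$; (iii) bound the resulting bilinear Kloosterman sum by the paper's new large-sieve inequality (Theorem~\ref{pro1}), whose point is precisely that the extra factor $w=c_0$ coprime to the level contributes only $w^\theta$ (via Hecke multiplicativity $\rho_j(hw)=\sum_{u\mid(h,w)}\rho_j(h/u)\lambda_j(w/u)$) rather than $\sqrt{w}$. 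That $w^\theta$ versus $\sqrt{w}$ is exactly the source of the saving $M^{-(1/2-\theta)}$, which after balancing against the Step~2 truncations gives $\beta=\tfrac{11}{4875}$ with $\theta=\tfrac{7}{64}$. The off-the-shelf Deshouillers--Iwaniec Theorem~13 (the case $w=1$) would lose $\sqrt{M}$ here and give only convexity.

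Finally, the exponent $N^{4/3}$ does not fall out of ``Weil plus Voronoi-shortened length''; it comes from bounding the $\ell^2$-norm of the shifted-sum coefficients $A(v)=\sum_{m-Ln=v}\lambda_f(m)\lambda_{f^*}(n)/\sqrt{mn}$ by $N^{1/3+\epsilon}$, which in turn requires the nontrivial sup-norm bound $\|f\|_\infty\ll N^{1/3+\epsilon}$ (Wilton-type input). Without that ingredient the best you get is $N^{3/2}$ in place of $N^{4/3}$. So the missing ideas are: Ramanujan-sum collapse, Jutila's circle method plus double Voronoi, the $w^\theta$-large-sieve of Theorem~\ref{pro1}, and the sup-norm input.
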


\begin{rem}
For $k=2$ and $f$ holomorphic, we can only get the bound with $N^{\frac{4}{3}}$ replaced by $N^{\frac{3}{2}}$ in the error term.
\end{rem}


\begin{thm}
\label{cro1}
Let $M, N$ be positive coprime integers with $N$ square-free. Denote by $\mathcal{B}^{*}_{\kappa}(M)$ an orthonormal basis of holomorphic cusp forms with weight $\kappa$ and level $M$. Then, for any holomorphic (resp. non-exceptional Maa{\ss}) Hecke newform $f$ with weight $k>2$ (resp. spectral parameter $t_f$ and weight $0$) and level $N$, we have
$$\sum_{g \in B^{*}_{\kappa}(M)}\left|L^{(j)}\(\frac{1}{2}+it,f \otimes g\)\right|^2 \ll_{\epsilon,f,\kappa,j} \(M+\(1+\(1+|t|\)^{24}\frac{N^{\frac{1}{3}}}{M^{\frac{1}{3}}}\)M^{1-\beta}N\)(\(1+|t|\)MN)^{\epsilon},$$
where $\beta = \frac{11}{4875}$.
\end{thm}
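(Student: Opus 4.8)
The plan is to deduce Theorem~\ref{cro1} from Proposition~\ref{thm1}: using an approximate functional equation I will write each $L^{(j)}(1/2+it,f\otimes g)$ as a short average of the Dirichlet polynomials estimated in Proposition~\ref{thm1}, and then carry out the remaining summations. The conceptual content is entirely in the proposition; the deduction is largely routine, but two steps need care.

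First I would record the identity $L(s,f\otimes g)=\zeta^{(MN)}(2s)\sum_{n\ge1}\lambda_f(n)\lambda_g(n)n^{-s}$, valid because $(M,N)=1$ with $N$ square-free: at $p\mid N$ the form $f$ is Steinberg, so $\lambda_f(p^{\nu})=\lambda_f(p)^{\nu}$ and the local Rankin-Selberg factor equals $\sum_{\nu}\lambda_f(p^{\nu})\lambda_g(p^{\nu})p^{-\nu s}$ with no spurious zeta factor; the same holds at $p\mid M$. Hence the Dirichlet coefficients of $L(s,f\otimes g)$ are $a_{f\otimes g}(n)=\sum_{b^2\mid n}\lambda_f(n/b^2)\lambda_g(n/b^2)$. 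Since $f\otimes g$ has conductor $\asymp(MN)^2$ and its remaining archimedean parameters are bounded in terms of $k,t_f,\kappa$ (here I use that $f$ is non-exceptional in the Maa{\ss} case, so the $\Gamma$-factors are tame), differentiating the balanced approximate functional equation $j$ times in $s$ expresses $L^{(j)}(1/2+it,f\otimes g)$, up to a negligible tail, as $\Sigma_{+}+\Sigma_{-}$ with
\[
\Sigma_{\pm}=\sum_{b}\frac{1}{b^{1\pm 2it}}\ \sum_{X}\ \sum_{m}\frac{\lambda_f(m)\lambda_g(m)}{\sqrt m}\,h^{\pm}_{b,X}\!\Big(\frac{m}{X}\Big);
\]
here $X$ runs over dyadic numbers with $X\ll(MN(1+|t|))^{O(1)}$, $b\ll(MN(1+|t|))^{O(1)}$, and each $h^{\pm}_{b,X}$ is smooth, supported in $[1/2,5/2]$, and satisfies $(h^{\pm}_{b,X})^{(\nu)}\ll_{\nu}Z^{\nu}$ with $Z\ll_{\epsilon}(1+|t|)(MN(1+|t|))^{\epsilon}$. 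Establishing this last bound is the first delicate point: the factor $m^{-it}$ costs $(1+|t|)^{\nu}$ per derivative, the archimedean weight has $(MN(1+|t|))^{\epsilon}$-bounded derivatives once the analytic conductor is built into its argument, and the $j$ differentiations insert only harmless factors $(\log m)^{\le j}\ll(MN(1+|t|))^{\epsilon}$.

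Next I would square out and collapse the auxiliary sums. By positivity and a change to a Hecke eigenbasis one reduces, for each $M'\mid M$, to the harmonically weighted average of Proposition~\ref{thm1} at level $M'$, the dominant contribution being $M'=M$; then $\sum_{g}|\cdot|^{2}\le(\max_{g}\omega_{g})\sum_{g}\omega_{g}^{-1}|\cdot|^{2}$ together with $\max_{g}\omega_{g}\ll_{\epsilon,\kappa}M^{1+\epsilon}$ is what turns the ``$1$'' of Proposition~\ref{thm1} into the main term $M$. Because $\sum_{b}b^{-1}<\infty$ and only $O(\log)$ values of $X$ contribute, Cauchy-Schwarz in $b$ against the weight $b^{-1}$, and in $X$, loses only powers of $\log(MN(1+|t|))$ --- this is the second point that needs care, since a cruder treatment of the $b$-sum would cost a power of $MN$. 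Applying Proposition~\ref{thm1} to each inner sum with the above $X$ and $Z_{h}$, summing its terms over dyadic $X\ll(MN(1+|t|))^{O(1)}$ geometrically and over $b$ via $\sum_{b}b^{-1},\sum_{b}b^{-2},\sum_{b}b^{-3}<\infty$, multiplying by $\max_{g}\omega_{g}\ll M^{1+\epsilon}$, and invoking $M,N\ge1$ and $2/3-\beta>0$, one checks that every term is absorbed into $\big(M+(1+(1+|t|)^{24}N^{1/3}M^{-1/3})M^{1-\beta}N\big)((1+|t|)MN)^{\epsilon}$: the term $1$ produces $M$; the terms $X/(MN)$ and $X/M^{1+\beta}$ are, after multiplication by $M$, dominated by $(1+|t|)^{24}N^{4/3}M^{2/3-\beta}$; and the $N^{4/3}$ term of the proposition matches the corresponding term of the theorem. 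The holomorphic and Maa{\ss} cases for $f$ differ only in the shape of the (fixed) $\Gamma$-factors.

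I expect the first delicate point --- the uniform control of the cutoffs $h^{\pm}_{b,X}$, including after the $j$ differentiations --- to be the real obstacle, since that is precisely where the shape of the error term, and hence the exponent $24$ on $1+|t|$ and the dependence on $j$, is pinned down; the $b$-summation and the passage from the harmonically weighted to the natural average are then routine.
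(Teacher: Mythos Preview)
Your deduction is correct and follows the same skeleton as the paper's: approximate functional equation, dyadic partition, Proposition~\ref{thm1}, then $\omega_g\ll M^{1+\epsilon}$ to remove the harmonic weights. Three implementation choices differ from the paper, and it is worth knowing the paper's versions since each is a little cleaner.

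First, the paper does not open $\zeta^{(MN)}(2s)$ into a $b$-sum at all: it folds $\zeta^{(MN)}(2s+2u)$ into the weight $V_s$ from the outset, so the test function $h(x/X)=h_0(x/X)x^{\pm\mu}V_{\frac12\pm\mu}(x/\sqrt{\mathfrak Q})$ already carries it and the extra Cauchy--Schwarz over $b$ is unnecessary. Second, for the derivatives the paper does not differentiate the approximate functional equation; it simply bounds $L(\tfrac12+\mu,f\otimes g)$ uniformly on the circle $|\mu-it|=1/\log\mathfrak Q$ and recovers $L^{(j)}$ by Cauchy's integral formula, so the only effect of $j$ is the implied constant, and your ``first delicate point'' evaporates: one checks directly that $h^{(\nu)}\ll_\nu(1+|\mu|)^\nu$, whence $Z_h\asymp 1+|t|$ and $(1+Z_h)^{24}\asymp(1+|t|)^{24}$. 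Third, you do not need the newform decomposition over $M'\mid M$: since Proposition~\ref{thm1} is already stated for the full Hecke eigenbasis $\mathcal B_\kappa(M)$, positivity lets you simply enlarge $\mathcal B^*_\kappa(M)$ to $\mathcal B_\kappa(M)$.

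One small bookkeeping slip: after multiplying by $M$, the term $X/(MN)$ contributes to $M$ (not to $(1+|t|)^{24}N^{4/3}M^{2/3-\beta}$), and $X/M^{1+\beta}$ contributes to $M^{1-\beta}N$; both are already present in the stated bound, so the conclusion stands.
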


\begin{rem}
Neither the exponent $\(1+|t|\)^{24}$ nor the power saving $\beta$ is optimal. 
\end{rem}

\begin{thm}
\label{cro1_1}
Under the condition of Theorem \ref{cro1}, let $M$ be square-free. Then there are effective constants $\alpha, B>0$ such that
$$L^{(j)}\(\frac{1}{2}+it,f \otimes g\) \ll_{\epsilon,j ,f,\kappa} (1+|t|)^B(MN)^{\frac{1}{2}-\alpha},$$
where the implied constant does not depend on the non-archimedean conductor $N$ of $f$.
\end{thm}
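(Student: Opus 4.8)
The plan is to deduce Theorem \ref{cro1_1} from the second moment bound of Theorem \ref{cro1} by a standard ``picking one term out of a positive sum'' argument, combined with the assumption that $M$ is square-free so that $g$ itself can be taken to be a newform. Fix a holomorphic Hecke newform $g$ of weight $\kappa$ and level $M$; since $M$ is square-free, the newform $g$ (suitably normalized to have unit norm) is a member of an orthonormal basis $\mathcal{B}^*_\kappa(M)$ of the space of cusp forms of level $M$, and the full level-$M$ space is spanned by newforms of level dividing $M$ together with their oldform translates. The key point is that every term in the sum $\sum_{g'\in\mathcal{B}^*_\kappa(M)}|L^{(j)}(1/2+it,f\otimes g')|^2$ is non-negative, so dropping all but the one coming from our fixed $g$ yields
\begin{equation}
\left|L^{(j)}\(\tfrac12+it,f\otimes g\)\right|^2 \ll_{\epsilon,f,\kappa,j} \(M+\(1+\(1+|t|\)^{24}\tfrac{N^{1/3}}{M^{1/3}}\)M^{1-\beta}N\)(\(1+|t|\)MN)^{\epsilon}.
\end{equation}

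Next I would simplify the right-hand side under the range of validity. Taking square roots gives $L^{(j)}(1/2+it,f\otimes g)\ll (1+|t|)^{B'} (M^{1/2} + M^{(1-\beta)/2}N^{1/2} + N^{2/3}M^{(1/3-\beta)/2})(MN)^{\epsilon}$ for a suitable absolute $B'$ (one may take $B'=12$ from the $(1+|t|)^{24}$, plus an $\epsilon$ from the $(1+|t|)^\epsilon$ factor, so $B$ slightly larger than $12$ suffices). The convexity bound for $L(1/2+it,f\otimes g)$, coming from the analytic conductor being $\asymp (MN)^2(1+|t|)^{O(1)}$ for coprime square-free levels, is $(MN)^{1/2+\epsilon}(1+|t|)^{O(1)}$; differentiation in $s$ costs only a power of $\log$ (or a harmless $(1+|t|)^\epsilon$) by Cauchy's integral formula applied to a disc of fixed radius, so it suffices to beat $(MN)^{1/2}$ by a fixed power. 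I would then split into the two regimes $N\le M$ and $N>M$: when $N\le M$, the dominant term among the three is $M^{(1-\beta)/2}N^{1/2}\le (MN)^{1/2}M^{-\beta/2}=(MN)^{1/2-\beta/4}$ (after checking $M^{1/2}\le (MN)^{1/2}$ trivially and $N^{2/3}M^{(1/3-\beta)/2}\le M^{2/3}M^{(1/3-\beta)/2}=M^{(1-\beta)/2}\cdot M^{1/2}$, which is $\le (MN)^{1/2-\beta/4}$ since $M\le MN$), so one gets the saving with $\alpha = \beta/4$.

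The regime $N>M$ is the genuinely interesting one and is where the symmetry remark in the abstract enters: by interchanging the roles of $f$ and $g$ (both are holomorphic here, $M$ and $N$ are both square-free and coprime), Theorem \ref{cro1} applies equally with $M$ and $N$ swapped, so the second moment $\sum_{f'\in\mathcal{B}^*_k(N)}|L^{(j)}(1/2+it,f'\otimes g)|^2$ is bounded by $(N+(1+(1+|t|)^{24}M^{1/3}/N^{1/3})N^{1-\beta}M)((1+|t|)MN)^\epsilon$; picking out the term for the fixed newform $f$ and repeating the above analysis in the range $M\le N$ gives the same conclusion with $N$ in the role of $M$. Hence in all cases $L^{(j)}(1/2+it,f\otimes g)\ll (1+|t|)^B(MN)^{1/2-\alpha}$ with $\alpha$ a fixed positive multiple of $\beta$, and the implied constant depends on $f$ but, crucially, not on $N$ via the displayed uniformity in Theorem \ref{cro1}. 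The main obstacle I anticipate is purely bookkeeping: making sure that when one selects a single newform out of the orthonormal basis the normalizing factor $\omega_g$ (or the switch from $\mathcal{B}_\kappa$ to $\mathcal{B}^*_\kappa$) does not secretly reintroduce $M$- or $N$-dependence into the implied constant, and confirming that the three error terms combine to a clean power saving precisely on the overlap $|N-M|$ small rather than leaving a gap; but since Theorem \ref{cro1} is already phrased with $\mathcal{B}^*_\kappa(M)$ and with explicit uniformity, this is routine, and the positivity trick plus the $f\leftrightarrow g$ symmetry closes the full range.
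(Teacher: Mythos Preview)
Your positivity-plus-symmetry argument has a genuine gap in the regime where $N$ is very small compared to $M$. After dropping all but one term from the second moment in Theorem \ref{cro1} and taking square roots, one of the three terms you obtain is $M^{1/2}$, coming from the $M$ on the right-hand side. You dismiss this by writing ``$M^{1/2}\le (MN)^{1/2}$ trivially,'' but that inequality is only the convexity bound: what you need is $M^{1/2}\ll (MN)^{1/2-\alpha}$ for some fixed $\alpha>0$, i.e.\ $M^{\alpha}\ll N^{1/2-\alpha}$, and this \emph{fails} when $N$ is bounded (or more generally when $N\le M^{o(1)}$). In fact your claim that ``the dominant term among the three is $M^{(1-\beta)/2}N^{1/2}$'' is false in exactly this range: for $N$ small the term $M^{1/2}$ dominates $M^{(1-\beta)/2}N^{1/2}$. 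So Theorem \ref{cro1} by itself, read off via positivity, does not yield subconvexity uniformly in $N$.

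The paper closes this gap by invoking an \emph{external} input: the Kowalski--Michel--VanderKam bound $L(\tfrac12+it,f\otimes g)\ll (1+|t|)^C N^A M^{1/2-\delta}$ for some effective $A,C,\delta>0$. One then splits according to $N=M^x$: for $x\le \delta/A$ the KMV bound already beats convexity in $MN$, while for $\delta/A<x\le 1$ the bound from Theorem \ref{cro1} (namely $(MN)^{1/2+\epsilon}(N^{-1/2}+M^{-\beta/2})$) gives the saving, since now $N^{-1/2}\le M^{-\delta/(2A)}$. The symmetry step you describe for $N>M$ is correct and is what the paper uses too (and is why both levels must be square-free), but without the KMV input your argument does not cover the range $N\ll M^{o(1)}$.
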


\begin{rem}
By combining our results and the proofs in \cite{KMV}, $\alpha$ can be at least $1/1602$. We can expect a sharper bound by using amplification method in Section \ref{se2}. Another approach to the hybrid subconvexity problem can be found in \cite{MNV}, where the authors are able to establish subconvexity for more general cases. 
\end{rem}

The key ingredient to establish the theorems above is the following Large Sieve type inequality.

\begin{thm}
\label{pro1}
Let $Z, V, H, Q \geqslant 1$ be real numbers and let $u(v,h,q,d)$ be a smooth function supported on $[V,2V]\times[H, 2H]\times[Q,2Q]\times [D,2D]$ satisfying $||u^{(i,j,k,l)}||_{\infty} \leqslant Z^{i+j+k+l}V^{-i}H^{-j}Q^{-k}D^{-l}$ for all $i,j,k,l\geqslant 0$. Let $r,s$ be positive integers s.t. $(r,s) = 1$. Let $w$ be a positive integer such that $(w,rs) =1$. Let $a(v), b(h,d)$ be two finite sequences of complex numbers. Then
\begin{align*}
\mathfrak{S}_{\pm} := & \sum_{\substack{q\\(q,r)=1}}\sum_{d}\sum_{h}\sum_{v}\frac{1}{q}S(v\overline{r}, \pm hw, sq)a(v)b(h,d)u(v,h,q,d) \\
\ll _{\epsilon} & s\sqrt{r}\left(\frac{1+\left(\frac{\Xi}{Z}\right)^{-2\theta}}{Z+\Xi}\right)\left(Z+\Xi+ \sqrt{\frac{V}{rs}}\right)\left(Z+\Xi+ \sqrt{\frac{H}{rs}}\right)w^{\theta}\|a(v)\|_2\|B(h)\|_2(1+Z^8)(wVHZ)^{\epsilon},
\end{align*}
where $\Xi = \frac{\sqrt{VHw}}{s\sqrt{r}Q}$, $B(h) = \sum_{D\leqslant d \leqslant 2D} |b(h,d)|$ and $\theta$ is the Ramanujan bound for cusp forms on $\Gamma(rs)\backslash \mathbb{H}^2$.
\end{thm}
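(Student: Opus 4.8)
\emph{Strategy.} Since $(r,sq)=1$, substituting $x\mapsto rx$ in the definition of the Kloosterman sum gives $S(v\overline r,\pm hw,sq)=S(v,\pm hw\,\overline r,sq)$, so $\mathfrak S_\pm$ is built from generalized Kloosterman sums to moduli $c=sq$ with $s\mid c$ and $(c/s,r)=1$; writing $\tfrac1q=\tfrac s c$ it has the shape of the Kloosterman term in a Kuznetsov trace formula. The plan is as follows. First, for each fixed triple $(v,h,d)$, I would recognize the inner $q$-sum as $s\sum_c c^{-1}S(v,\pm hw\,\overline r,c)\,\check\phi\!\left(4\pi\sqrt{v\,hw}/c\right)$ for a spectral-side test function $\phi$ to be chosen, and apply the Kuznetsov formula for the relevant congruence group of level $rs$ — the cusp pair being the one whose generalized Kloosterman sums are exactly $S(v,\pm hw\,\overline r,c)$, and using the $+$-form with the $J$-Bessel transform or the opposite-sign form with the $K$-Bessel transform according to the sign $\pm$ — to rewrite it as a sum over the Maass spectrum and the Eisenstein continuum of $\Gamma(rs)\backslash\BH^2$. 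Then I would interchange summation so that the $v$- and $h$-sums run against the Fourier coefficients, and finish with Cauchy--Schwarz in the spectral variable and the Deshouillers--Iwaniec spectral large sieve.

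\emph{The test function.} The single analytic input is a function $\phi$, depending mildly on $(v,h,d)$, whose Bessel/Kloosterman transform $\check\phi$ reproduces the smooth weight $u(v,h,q,d)$ in the variable $q\in[Q,2Q]$ up to a negligible tail. With $x=4\pi\sqrt{v\,hw}/c$ ranging over values of size $\asymp\Xi=\sqrt{VHw}/(s\sqrt r Q)$ and $u$ varying on the scale $Q/Z$ in $q$, the Kuznetsov inversion (a Mellin--Barnes / stationary-phase analysis of the transform) produces a $\phi$ essentially supported on $|t|\ll Z+\Xi$ with
\[
\|\phi\|_\infty\ \ll\ \frac{1+(\Xi/Z)^{-2\theta}}{Z+\Xi},
\]
the factor $(\Xi/Z)^{-2\theta}$ entering only in the non-oscillatory regime $\Xi\ll Z$, where $J_{2it}(x)$ (resp.\ $K_{2it}(x)$) at small $x$ forces the exceptional spectrum — bounded by the Ramanujan exponent $\theta$ for $\Gamma(rs)$ — to be weighted more heavily. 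The tails of $\check\phi$ and the cost of the integrations by parts used to localize $\phi$ account for the $(1+Z^8)(wVHZ)^\epsilon$ loss, while the powers of $r$ in the final bound are bookkeeping from the cusp-width normalization of the Fourier coefficients and of the generalized Kloosterman sums.

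\emph{Cauchy--Schwarz and the large sieve.} Because $d$ does not enter the arithmetic, $\big|\sum_d b(h,d)\big|\le B(h)$, and after the Kuznetsov step $\mathfrak S_\pm$ equals, up to admissible errors,
\[
s\sum_j \phi(t_j)\Big(\sum_v a(v)\,\rho_j(v)\Big)\overline{\Big(\sum_h B(h)\,\rho_j(hw)\Big)}\ +\ (\text{Eisenstein analogue}),
\]
where $\rho_j$ denotes the Fourier coefficients at the two cusps. Dominating $|\phi|$ by a nonnegative majorant of the same support and sup-norm and applying Cauchy--Schwarz in $j$ (and in the continuous parameter), one is reduced to two spectral large-sieve estimates. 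The Deshouillers--Iwaniec inequality for level $rs$ bounds a length-$X$ spectral sum by $\ll\|\phi\|_\infty\big((Z+\Xi)^2+X/(rs)\big)$ times the squared $\ell^2$-norm, with Eisenstein and exceptional-eigenvalue parts controlled by the same $\theta$; here $X=V$ for the first factor, while for the second I would first use Hecke multiplicativity $\rho_j(hw)=\lambda_j(w)\rho_j(h)$ on the range $(h,w)=1$, together with $|\lambda_j(w)|\le w^\theta$ and a crude treatment of the $(h,w)>1$ terms, to reduce $X$ from $Hw$ to $H$ at the price of $w^\theta$. Taking square roots, multiplying the two factors by the leading $s$, and absorbing the $r$-normalizations and the $Z^8$-type losses gives the stated bound.

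\emph{Main obstacle.} The rearrangements, Cauchy--Schwarz, the Hecke trick in the $w$-aspect, and the appeal to the spectral large sieve are routine. The real difficulty is the construction of the Kuznetsov test function and the uniform estimate $\|\phi\|_\infty\ll(1+(\Xi/Z)^{-2\theta})/(Z+\Xi)$ valid across both the oscillatory regime $\Xi\gg Z$ and the regime $\Xi\ll Z$ where the exceptional spectrum dominates, together with the verification that the neglected parts of $\check\phi$ contribute no more than the $(1+Z^8)$-type error. This is the technical heart of the proof and rests on a careful analysis of the Bessel transforms in Kuznetsov's formula — through their Mellin--Barnes representations and stationary phase — exactly as in the large-sieve inequalities of Deshouillers--Iwaniec and their later refinements.
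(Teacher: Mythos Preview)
Your overall architecture is right (Kuznetsov for $\Gamma_0(rs)$ at the cusp pair $(\infty,1/s)$, Cauchy--Schwarz on the spectral side, the Hecke trick $\rho_j(hw)=\sum_{u\mid(h,w)}\rho_j(h/u)\lambda_j(w/u)$ to reduce length $Hw$ to $H$ at cost $w^\theta$, then the Deshouillers--Iwaniec large sieve), and these steps match the paper exactly. But there is a genuine gap at the very first analytic step, and your identification of the ``main obstacle'' is off.

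\textbf{The gap: separation of variables.} You write that for each fixed $(v,h,d)$ the $q$-sum is $\sum_c c^{-1}S(\cdot)\,\check\phi(4\pi\sqrt{vhw}/c)$, with $\phi$ ``depending mildly on $(v,h,d)$'', and then that after Kuznetsov the whole thing equals $s\sum_j\phi(t_j)\big(\sum_v a(v)\rho_j(v)\big)\overline{\big(\sum_h B(h)\rho_j(hw)\big)}$. These two statements are incompatible: if the geometric-side weight depends on $(v,h,d)$, then so does the spectral weight $\hat\phi(t_j)$, and you \emph{cannot} pull the $v$- and $h$-sums inside the spectral sum to form the bilinear expression needed for Cauchy--Schwarz and the large sieve. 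Bounding $|\hat\phi_{v,h,d}(t)|$ uniformly by some majorant $\Phi(t)$ does not help, because the products $\rho_j(v)\overline{\rho_j(hw)}$ oscillate and there is no positivity to exploit. The weight $u(v,h,q,d)$ is an \emph{arbitrary} smooth function of four variables; it simply cannot be written as a function of the single combination $\sqrt{vhw}/q$ times harmless factors.

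The paper resolves this with one clean device you are missing: take the Fourier transform of $u$ in the three variables $v,h,d$ while keeping the combination $x=4\pi\sqrt{vhw}/(s\sqrt r\,q)$ fixed, i.e.\ write
\[
u(v,h,q,d)=\iiint_{\BR^3} G\Big(t_1,t_2,t_3;\tfrac{4\pi\sqrt{vhw}}{s\sqrt r\,q}\Big)\,e(t_1 v+t_2 h+t_3 d)\,dt_1\,dt_2\,dt_3 .
\]
For each fixed $(t_1,t_2,t_3)$ the function $\varphi(x):=(VHD)^{-1}(t_1V/Z)^{p_1}(t_2H/Z)^{p_2}(t_3D/Z)^{p_3}G(t_1,t_2,t_3;x)$ is genuinely a function of $x$ alone, supported on $x\asymp\Xi$ with $\varphi^{(i)}\ll(Z/\Xi)^i$; the phases $e(t_1 v)$, $e(t_2 h)$, $e(t_3 d)$ are absorbed into $a(v)$ and $b(h,d)$ without changing $\|a\|_2$ or $\|B\|_2$. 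Only then does the factorization on the spectral side hold. The final integration over $(t_1,t_2,t_3)$, choosing $p_i=0$ or $2$ according to the size of $t_i$, is what produces the $(1+Z^8)$ factor.

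\textbf{On the ``main obstacle''.} There is no Kuznetsov inversion, no Mellin--Barnes construction of $\phi$, and no stationary-phase analysis in the proof. One applies Kuznetsov \emph{forward} to $\varphi$ and simply quotes ready-made bounds on the transforms $\hat\varphi,\check\varphi,\tilde\varphi$ (Lemma~\ref{lemB}, due to Pitt): part (a) gives the factor $(1+|\log(\Xi/Z)|)/(1+\Xi/Z)$ on $|t|\le T:=Z+2\Xi+1$, part (b) handles $|t|>T$ by dyadic decomposition, and part (c) for the exceptional spectrum gives the $(\Xi/Z)^{-2\theta}$ when $\Xi<Z$. This is entirely soft once $\varphi$ has been isolated; the real technical content is the Fourier-separation step above, not the Bessel analysis.
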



\begin{rem}
By Kim-Sarnak \cite{K}, $\theta$ could be as small as $7/64$ for any $r,s$. If we assume the Ramanujan Conjecture for the discrete group $\Gamma(rs)$, then $\theta$ could be $\epsilon$. Moreover, the condition $(w, rs) =1$ is necessary in our proof. This inequality is inspired by the results in \cite{DI}, \cite{B1} and \cite{P1}.
\end{rem}

\begin{rem}
This inequality is a generalization of Theorem 13 in \cite{DI} (which is the case $w=1$ and $D=2/3$). However, after directly applying the result in \cite{DI}, one can only obtain $\(Z+ \Xi+\sqrt{\frac{Hw}{sr}}\)$ in the last parenthesis, rather than $\(Z+\Xi+ \sqrt{\frac{H}{sr}}\)w^{\theta}$. Since we are going to apply this large Sieve type inequality when $w$ is very large, our improvement is crucial.
\end{rem}

\begin{rem}
There is no saving in the sum over $d$. However, we need the $d$-sum here since the weight function $u$ contains an additional variable $d$ in our application.
\end{rem}

\subsection{The Structure of this Paper}
In Section \ref{se11}, we introduce all the notations, formulae and lemmas we need. In Section \ref{sed}, we reduce Theorem \ref{cro1} to Theorem \ref{thm1}. In Section \ref{ses}, we give a sketch of the proof. In Sections \ref{se2} and \ref{weight_functions}, we follow the same lines as in previous section and prove the main theorem in detail. In Section \ref{seLS}, we give the proof of the large sieve type inequality.\\

\section{Preliminaries}
\label{se11}
\subsection{Automorphic Forms and Nomalizations}\label{se1.1}
Let $M>0$ be an integer and $k>0$ be an even integer. Let 
$$\Gamma_0(M) := \left\{\left(\begin{array}{cc}a & b \\c & d\end{array}\right) \in SL_2(\mathbb{Z}): c \equiv 0 (\text{mod }M)\right\}$$
be the congruence subgroup. 

Denote by $\mathcal{L}^2(M)$ and $\mathcal{L}^2_0(M) \subset \mathcal{L}^2(M)$, respectively, the space of weight zero Maa{\ss} forms, and the space of weight zero Maa{\ss} cusp forms with respect to the congruence subgroup $\Gamma_0(M)$.
As the notations in \cite{BH}, denote by $\mathcal{S}_{k}(M)$ the linear space of all the functions $f(z) = y^{\frac{k}{2}}F(z)$ where $F$ is a holomorphic cusp form with weight $k$, level $M$ and trivial nebentypus. Both $\mathcal{L}^2_0(M)$ and $\mathcal{S}_{k}(M)$ are Hilbert spaces respect to the inner product
$$\braket{f_1,f_2} : = \int_{\Gamma(M)\backslash\mathbb{H}^2}f_1(z)\overline{f_2(z)}\frac{dxdy}{y^2}.$$

We recall the definition of holomorphic cusp forms and Maa{\ss} forms.

The holomorphic cusp forms with weight $k$ and level $M$ are holomorphic functions on the upper half-plane $F : \mathbb{H}^2 \rightarrow \mathbb{C}$ satisfying
$$F(\gamma z) = (cz+d)^{k}F(z),$$
when 
$$\gamma = \left(\begin{array}{cc}a & b \\c & d\end{array}\right) \in \Gamma_0(M),$$
 and vanishing at every cusp. Any form $f\in \mathcal{S}_{k}(M)$ has a Fourier series expansion at infinity
 $$f(z) = \sum_{n\geqslant 1}\frac{\psi_f(n)}{n^{\frac{1}{2}}}(yn)^{\frac{k}{2}}e(nz)$$
 with coefficients $\psi_f(n)$ satisfying
 $$\psi_f(n) \ll_f \tau(n)$$
as proven by Deligne. In this paper, $e(z)$ always means $e^{2\pi i z}$.

The Maa{\ss} cusp forms with archimedean parameter $\lambda_{\infty}$ and level $M$ are $L^2$ functions $f : \mathbb{H}^2 \rightarrow \mathbb{C}$ satisfying $\Delta f = \lambda_{\infty} f$ for Laplacian $\Delta = -y^2(\partial^2_x+\partial^2_y)$, $f(z) = f(\gamma z)$ for all $\gamma \in \Gamma_0(M)$ and vanishing at every cusp. Any form $f \in \mathcal{L}^2_0(M)$ has the Fourier series expansion as 
$$f(z) = \sum_{n \neq 0} \frac{\psi_f(n)}{|n|^{\frac{1}{2}}}(|n|y)^{\frac{1}{2}}K_{it}(2\pi |n|y).$$

We can choose orthonormal basis $\mathcal{B}_k(M)$ and $\mathcal{B}(M)$, respectively, of $\mathcal{S}_k(M)$ and $\mathcal{L}^2_0(M)$ which consist of eigenfunctions of all the Hecke operators $T_m$ with $(m,M) =1$. If a cusp form $f$ is an eigenfunction of the Hecke operator $T_m$, we denote by $\lambda_f(m)$ the eigenvalue of $f$.

By the property of Hecke operators, one has that 
\begin{align}\label{Heckerel}
\psi_f(m)\lambda_f(n)=\sum_{d|(m,n)}\psi_f\(\frac{mn}{d^2}\)
\end{align}
for any $m,n\geqslant 1$ with $(n, M)=1$ when $f$ is in $\mathcal{B}_k(M)$ or $\mathcal{B}(M)$. In particular, $\psi_f(1)\lambda_f(n) = \psi_f(n)$ when $(n,M)=1$.

There are subsets $\mathcal{B}^{\star}_k(M)$ and $\mathcal{B}^{\star}(M)$, respectively, of $ \mathcal{B}_k(M)$ and $\mathcal{B}(M)$ which consist of all the \emph{newforms}. It is well known that newforms are the eigenfunctions of all the Hecke operators $T_m$ even for $(m,M)\neq1$. 

In order to treat both cases simultaneously, we rewrite the Fourier expansion as 
\begin{equation}
f(z) = \sum_{n \neq 0}\frac{\psi(n)}{\sqrt{n}}W_f(|n|y)e(nx)
\end{equation}
where
\begin{equation}
W_f(y) = 
\begin{cases}
\Gamma(k)^{-\frac{1}{2}}(4\pi y)^{\frac{k}{2}}e^{-2\pi y}, & f \in \mathcal{B}_k(M),\\
y^{\frac{1}{2}}\cosh(\frac{1}{2}\pi t_f)K_{it_f}(2\pi y), & f \in \mathcal{B}(M).
\end{cases}
\end{equation}
The $t_f$ is the archimedean parameter of $f$ which is defined as 
\begin{equation}
t_f = 
\begin{cases}
\frac{k-1}{2}, & f \in \mathcal{S}_k(M),\\
\sqrt{\lambda_{\infty} - \frac{1}{4}}, & f \in \mathcal{L}^2_0(M)\text{ satisfying } \Delta f = \lambda_{\infty} f.\\
\end{cases}
\end{equation}

We call the cusp forms $f $ with real $t_f$ the non-exceptional forms and the $f$ with $t_f = ir$ for some $0<r<\frac{1}{2}$ the exceptional Maa{\ss} forms.

Now for any cusp form $f$, we normalize it such that $\psi_f(1) = \lambda_f(1)= 1$. Moreover, when $N$ is square-free, by local calculation (see \cite{GH}), we have that 
\begin{align}\label{saving_norm}
\left|\lambda_f(L)\right|=L^{-1/2} \text{ for any } L|N.
\end{align}

Under this normalization, we have
\begin{pro}\text{(Theorem 1.1 \cite{HT})}
Let $f \in \mathcal{B}(N)$ be a Hecke-Maass cusipidal newform of square-free level $N$ as normalized above. Then for any $\epsilon >0$ we have a bound
$$\|f\|_{\infty} \ll_{t_f,\epsilon} N^{\frac{1}{3}+\epsilon},$$
where the implied constant depends continuously on $\lambda$.
\end{pro}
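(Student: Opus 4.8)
This is the level-aspect sup-norm bound of Harcos and Templier, and I would reprove it by combining an \emph{amplified pre-trace formula} on the bulk of $\Gamma_0(N)\backslash\mathbb{H}$ with a \emph{Fourier-expansion estimate} near the cusps, the two ranges being glued along a threshold height of order $N^{-1/3}$. Square-freeness of $N$ enters decisively through Atkin--Lehner theory, which at each cusp of denominator $L\mid N$ produces the normalising factor $|\lambda_f(L)|=L^{-1/2}$ recorded in \eqref{saving_norm}. Throughout one works in a fixed fundamental domain of $\Gamma_0(N)$, and it suffices to bound $|f(z)|$ there; the implied constant will come out continuous in the Laplace eigenvalue because every auxiliary object below (test kernel, Bessel thresholds) can be chosen continuously in $t_f$.

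\textbf{Near the cusps.} Let $\mathfrak{a}$ be a cusp; for square-free $N$ its denominator is a divisor $L_\mathfrak{a}\mid N$ and its width is $N/L_\mathfrak{a}\ge1$. Write $z=\sigma_\mathfrak{a}(u+iv)$ in standard position, so that $f(z)=\sum_{n\ne0}\psi_\mathfrak{a}(n)|n|^{-1/2}W_f(|n|v)e(nu)$. Atkin--Lehner theory for square-free level (using that $f$ is a newform, so that $\sigma_\mathfrak{a}$ is, modulo $\Gamma_0(N)$ and $\SL_2(\mathbb{Z})$, an Atkin--Lehner involution with pseudo-eigenvalue of modulus one) gives $|\psi_\mathfrak{a}(n)|\ll_\epsilon|\lambda_f(L_\mathfrak{a})|\,|\lambda_f(n)|\,n^{\epsilon}$ for $(n,N)=1$, and comparable bounds in general. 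Inserting \eqref{saving_norm}, the Kim--Sarnak bound $\lambda_f(n)\ll_\epsilon n^{7/64+\epsilon}$, and the decay of $W_f$ (which limits the effective sum to $|n|\ll(1+|t_f|)/v$, uniformly for $t_f$ in a compact set) yields a bound of the shape
\[
|f(z)|\ \ll_{t_f,\epsilon}\ L_\mathfrak{a}^{-1/2}\,v^{-\vartheta}\,(N/v)^{\epsilon},\qquad \vartheta\in\big(\tfrac12,\tfrac12+\tfrac{7}{64}\big].
\]
Hence, as soon as $v$ exceeds a threshold $v_\mathfrak{a}$ — a small negative power of $N$, depending on $L_\mathfrak{a}$ — one already has $|f(z)|\ll_{t_f,\epsilon}N^{1/3+\epsilon}$; the $v_\mathfrak{a}$ are chosen precisely so as to match the bulk bound below.

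\textbf{The bulk.} It remains to bound $|f(z)|$ for $z$ that lies, in the $\sigma_\mathfrak{a}$-coordinate, below $v_\mathfrak{a}$ for \emph{every} cusp $\mathfrak{a}$. Fix a point-pair invariant kernel $k_\delta$ on $\mathbb{H}\times\mathbb{H}$ supported on hyperbolic distance $\le\delta$ with $\delta\asymp(1+|t_f|)^{-1}$, normalised so that its Selberg transform $h_\delta$ is non-negative on $\mathbb{R}\cup i[-\tfrac{7}{64},\tfrac{7}{64}]$ — hence on the whole automorphic spectrum of $\Gamma_0(N)$, by Kim--Sarnak — and $h_\delta(t_f)\ge1$. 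Take a standard Hecke amplifier $(x_n)$ of length $\asymp L$, supported on integers coprime to $N$, for which the relation $\lambda_f(\ell)^2=\lambda_f(\ell^2)+1$ gives $\sum_n x_n\lambda_f(n)\gg_\epsilon L^{1-\epsilon}$ while $\sum_n|x_n|^2\ll_\epsilon L^{1+\epsilon}$. Positivity of the spectral side of the amplified pre-trace formula then gives, for such $z$,
\[
L^{2-2\epsilon}\,|f(z)|^2\ \ll_{t_f}\ \sum_{n_1,n_2}\frac{|x_{n_1}|\,|x_{n_2}|}{\sqrt{n_1 n_2}}\sum_{d\mid(n_1,n_2)}d\!\!\sum_{\gamma\in M_{n_1n_2/d^2}(N)}\!\!\big|k_\delta(z,\gamma z)\big|,
\]
where $M_m(N)$ is the set of integer $2\times2$ matrices $\begin{pmatrix}a&b\\c&d\end{pmatrix}$ of determinant $m$ with $N\mid c$, taken modulo $\pm1$. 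The contribution of $m=1$ is the harmless main term $O_\epsilon\big(L^{-1+\epsilon}(1+|t_f|)^{O(1)}\big)$ for $|f(z)|^2$, coming from $\gamma=\pm\id$ and the few small-distance elements of $\Gamma_0(N)$. For $m>1$ one counts $\gamma\in M_m(N)$ with $m\le 4L^2$ and $u(z,\gamma z)\le\delta$: the terms coming from a neighbourhood of each cusp inside the group are controlled using the hypothesis $v<v_\mathfrak{a}$ together with the lower bound $\ge1$ on the cusp widths, and for the remaining $\gamma$ the divisibility $N\mid c$ forces $|c|\ge N$, so the classical geometric count (bounding $|a+d|$ and $|c|$ from $u(z,\gamma z)\le\delta$, then the admissible $b$ from $\det\gamma=m$) gains a power of $N$. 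Summing over the amplifier and dividing by $L^{2-2\epsilon}$ bounds $|f(z)|^2$ by a power of $L$ plus a term of the form $L^{a}N^{-b}(1+|t_f|)^{O(1)}(LN)^{\epsilon}$; choosing $L$ to be the right small power of $N$ balances this against the size of the Fourier bound at $v=v_\mathfrak{a}$, the common value being $N^{2/3+\epsilon}$, so $|f(z)|\ll_{t_f,\epsilon}N^{1/3+\epsilon}$ on the bulk as well. Combining the two ranges and recalling that the test kernel and Bessel thresholds vary continuously in $t_f$ gives the theorem.

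\textbf{Main obstacle.} The heart of the matter is the uniform lattice-point count for $M_m(N)$ with the congruence $N\mid c$ — extracting the saving cleanly, uniformly in the position of $z$ (especially through the cusp-boundary regions) and continuously in $t_f$ — together with the bookkeeping needed to glue the cusp and bulk estimates so that the exponent $N^{1/3}$ holds at \emph{every} point of the quotient, not merely deep in the interior. Square-freeness is used essentially in the near-cusp analysis via Atkin--Lehner theory and the coefficient identity \eqref{saving_norm}; all of this is carried out in Harcos--Templier \cite{HT}.
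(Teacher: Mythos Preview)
The paper does not prove this proposition at all: it is quoted verbatim as Theorem~1.1 of \cite{HT} and used as a black box, so there is no ``paper's own proof'' to compare against. Your sketch is, correctly, an outline of the Harcos--Templier argument itself.

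A few points of detail where your outline diverges from \cite{HT}. First, the near-cusp reduction in \cite{HT} is not done cusp-by-cusp via Fourier expansions with the $L_{\mathfrak a}^{-1/2}$ factors you describe; instead one uses that $|f|$ is invariant under the full Atkin--Lehner group $A_0(N)$ (this is where square-freeness enters), reduces to a fundamental domain for $A_0(N)$, and then only the Fourier expansion at $\infty$ is needed. Second, the Fourier bound high in that domain is $|f(z)|\ll_{t_f,\epsilon} N^{\epsilon}y^{-1/2}$, coming from the Rankin--Selberg second-moment estimate $\sum_{n\le X}|\lambda_f(n)|^2\ll_\epsilon XN^{\epsilon}$ together with $K$-Bessel decay --- Kim--Sarnak is not used here, and your exponent $\vartheta>1/2$ is both unnecessary and goes the wrong way. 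Third, the resulting threshold is $y_0=N^{-2/3}$ (so that $y_0^{-1/2}=N^{1/3}$), not $N^{-1/3}$. With those corrections the rest of your sketch --- amplified pre-trace formula on the bulk, lattice-point count for $M_m(N)$ exploiting $N\mid c$, and the choice of amplifier length as a small power of $N$ --- is exactly the content of \cite{HT}.
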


However, in our application, we need the bound for holomorphic case too.
\begin{pro}\text{(Sup-norm for holomorphic case)}\label{sup}
Let $f \in \mathcal{B}_k(N)$ with square-free level $N$ and weight $k>2$. Then for any $\epsilon >0$ we have a bound
$$\|f(z)\|_{\infty} \ll_{k,\epsilon} N^{\frac{1}{3}+\epsilon}.$$
\end{pro}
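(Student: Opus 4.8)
The plan is to reduce the holomorphic sup-norm bound to the Maa{\ss} case already recorded in Theorem 1.1 of \cite{HT}, by running the same amplification/geometric argument but with the $K$-Bessel test function replaced by a suitable incomplete Gaussian adapted to holomorphic forms of weight $k$. Concretely, for $f\in\mathcal{B}_k(N)$ write $f(z)=y^{k/2}F(z)$ as in Section \ref{se1.1}, so that $|f(z)|^2=y^k|F(z)|^2$ is $\Gamma_0(N)$-invariant, and the quantity to be bounded is $\sup_{z\in\mathbb{H}^2}|f(z)|$. First I would fix a point $z_0$ and, after moving it into a fundamental domain, bound $|f(z_0)|^2$ by a geometric (pre-trace type) average: using the fact that $f$ is an eigenfunction of the (weight-$k$) Laplacian with eigenvalue $\tfrac{k}{2}(1-\tfrac{k}{2})$, one has the reproducing formula
\begin{equation}
|f(z_0)|^2 \ll \sum_{\gamma\in\Gamma_0(N)} k_k\bigl(u(z_0,\gamma z_0)\bigr),
\end{equation}
where $k_k$ is a point-pair invariant whose Selberg/Harish-Chandra transform concentrates at the spectral parameter $t_f=(k-1)/2$ at scale $1$, and $u(\cdot,\cdot)$ is the standard hyperbolic distance function; the weight $k$ enters only through the archimedean parameter, exactly as the spectral parameter $t_f$ enters in the Maa{\ss} case.

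Next I would split the sum on the right according to the lower-triangular ($c=0$) contribution and the $c\neq 0$ terms, and further according to whether $z_0$ lies in the "bulk" or near a cusp. The $c=0$ part is an Eisenstein-type elementary sum in $z_0$; for $z_0$ of bounded height it gives $O_{k,\epsilon}(N^{\epsilon})$, and one must check (as in \cite{HT}) that it never exceeds $N^{1/3+\epsilon}$ even high in the cusp, using the rapid decay of $W_f$ (i.e. the factor $e^{-2\pi y}$) to control $|f(z)|$ when $y$ is large. For the $c\neq 0$ terms one inserts an amplifier supported on primes $p\nmid N$ with $p\asymp L$ (a small power of $N$), opens the square, and is left with counting matrices in $\Gamma_0(N)\cdot T_\ell$-orbits with $c\neq 0$ and with $u(z_0,\gamma z_0)$ small; this is the usual lattice-point count, whose treatment in \cite{HT} is \emph{insensitive to whether the form is holomorphic or Maa{\ss}} — the only input from the form is the Hecke relation \eqref{Heckerel} and the Hecke-eigenvalue bound, both of which hold verbatim here (and are even slightly stronger in the holomorphic case by Deligne). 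Optimizing the amplifier length $L$ against the count produces the exponent $1/3$.

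I expect the main obstacle to be purely bookkeeping rather than conceptual: one must verify that the point-pair invariant $k_k$ attached to a holomorphic weight-$k$ form has the same localization properties (support essentially in $u\ll 1$, and the right size at $u\asymp 1$) that the Maa{\ss} kernel has, so that the geometric estimates of \cite{HT} can be quoted with $\lambda$ replaced by the holomorphic archimedean parameter; here the hypothesis $k>2$ is used to ensure $f\in L^2$ and that the relevant Bessel/Whittaker asymptotics are uniform. A secondary point is the uniform control near the cusps, where the decay of $W_f$ in the holomorphic case is in fact \emph{better} than in the Maa{\ss} case, so the cuspidal region causes no new trouble and the bound $N^{1/3+\epsilon}$ is the same. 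An alternative, even shorter route would be to deduce the statement directly from the general sup-norm results for arithmetic quotients that treat holomorphic and Maa{\ss} forms on the same footing (e.g. the work of Blomer--Holowinsky--Templier and successors), citing the square-free level case; either way the dependence on $N$ is $N^{1/3+\epsilon}$ with an implied constant depending only on $k$ and $\epsilon$, as claimed.
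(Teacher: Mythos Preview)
Your proposal has a genuine gap in the treatment of the parabolic (unipotent) contribution, and this is precisely where the holomorphic case diverges from the Maa{\ss} case in \cite{HT}. The paper's proof (referenced to \cite{Y1}, and sketched in a suppressed appendix) uses the holomorphic Bergman/Petersson kernel
\[
\sum_{\gamma\in G_l(N)} u_\gamma(z)^{-k},\qquad u_\gamma(z)=\frac{j(\gamma,z)(\bar z-\gamma z)}{\Im z},
\]
which decays only \emph{polynomially} in the hyperbolic distance, not with compact support or exponential decay as the point-pair invariants used for Maa{\ss} forms do. Consequently the generic and upper-triangular lattice counts from \cite{HT} do transfer as you say, but the parabolic matrices (those with $(a+d)^2=4l$, fixing a cusp) do \emph{not}: for $z$ in an Atkin--Lehner translate of a high cuspidal neighborhood their contribution to $\sum_\alpha |u_\alpha(z)|^{-k}$ is genuinely large, and one cannot simply quote the Maa{\ss} estimate. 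Your claim that ``the cuspidal region causes no new trouble'' and that the obstacle is ``purely bookkeeping'' is exactly the step that fails.

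The paper's argument handles this by a two-region decomposition that you are missing. One fixes $y_0=N^{-2/3}$ and first uses Atkin--Lehner invariance of $|y^{k/2}F(z)|$ together with the Fourier-expansion bound to show $|f(z)|\ll N^{1/3+\epsilon}$ on $\bigcup_{\alpha\in A_0(N)}\alpha.\mathfrak{A}_{y_0}$. On the complement (so effectively $\Im(\alpha z)\le N^{-2/3}$ for every Atkin--Lehner $\alpha$) one runs the amplified pretrace formula; the parabolic piece is then controlled by a separate lemma showing
\[
\sum_{\substack{\alpha\in G_l(N)\\ \alpha\ \text{parabolic}}}|u_\alpha(z)|^{-k}\ll_\epsilon \theta(l)\,l^{-k/2}\bigl(l^{1/2}y_0N^{1/3}+l^{1/2}N^{-1/3}+l^{1/2}N^{-5/3}y^{-1}\bigr)N^\epsilon,
\]
whose proof requires conjugating each parabolic $\alpha$ to an upper-triangular matrix via a scaling matrix $\sigma_{\mathfrak a}$, tracking the divisibility $N\mid c^2t$ through the square-free factorization $N=rs$, and then exploiting the hypothesis $z\notin\bigcup_\alpha\alpha.\mathfrak A_{y_0}$ to force $|sc_1z-a|^2\ge y/(ry_0)$. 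Only after this does the choice $L=N^{1/3}$ balance all terms. Your sketch contains neither the $y_0$-splitting nor any substitute for this parabolic estimate, so as written it does not close.
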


\begin{rem}
This result is first claimed in \cite{HT}. But the author is not aware of any written proof. A complete proof can be found in \cite{Y1}.
\end{rem}

From these propositions, one can deduce the Wilton's bound as below.
\begin{lem}\label{Wilton}
Let $f$ be an element in $\mathcal{B}^{\star}_k(N)$ or $\mathcal{B}^{\star}(N)$ with square-free level $N$. Then 
\begin{align}
S(f,X,\alpha):=\sum_{n \leq X}\frac{\psi_f(n)e(n\alpha)}{\sqrt{n}}\ll \|f\|_{\infty}(NX)^{\epsilon} \ll N^{\frac{1}{3}}(NX)^{\epsilon}.
\end{align}
\end{lem}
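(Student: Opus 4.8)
The plan is to prove the bound first with a smooth cutoff, remove the cutoff cheaply, and reduce everything to one contour‑shift estimate that converts an additively twisted, exponentially damped Dirichlet series attached to $f$ into a boundary value of $f$ on the real line; the input that makes the $N$‑dependence explicit is only the trivial bound $|f(z)|\le\|f\|_{\infty}$ together with interior derivative estimates, and then the sup‑norm bounds $\|f\|_{\infty}\ll_{\epsilon}N^{1/3+\epsilon}$ (the cited result and Proposition \ref{sup}, both using $N$ square‑free).

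\emph{Reduction to a localised smooth sum.} First I would fix a smooth $\Phi$ with $\Phi\equiv1$ on $(-\infty,1]$, $\Phi\equiv0$ on $[1+X^{-1},\infty)$ and $\|\Phi^{(j)}\|_{\infty}\ll_{j}X^{j}$. Since $\Phi(n/X)-\mathbf 1_{n\le X}$ is supported on the single range $X<n\le X+1$ and $|\psi_f(n)|/\sqrt n\ll_{\epsilon}n^{\epsilon}$, one has $S(f,X,\alpha)=\sum_n\frac{\psi_f(n)}{\sqrt n}e(n\alpha)\Phi(n/X)+O_{\epsilon}(X^{\epsilon})$, so it suffices to bound the smooth sum by $\|f\|_{\infty}(NX)^{\epsilon}$. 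Next, write $\Phi(n/X)=e^{-2\pi n/X}\Psi(n/X)$ with $\Psi(u)=\Phi(u)e^{2\pi u}$; then $\Psi$ has the same support and $\|\Psi^{(j)}\|_{\infty}\ll_{j}X^{j}$, hence $\widehat\Psi(\xi)\ll_{A}\min(1,|\xi|^{-1},(X/|\xi|)^{A})$ and $\int_{\mathbb R}|\widehat\Psi(\xi)|\,d\xi\ll\log X$. Fourier inversion then gives
$$
\sum_n\frac{\psi_f(n)}{\sqrt n}e(n\alpha)\Phi(n/X)=\int_{\mathbb R}\widehat\Psi(\xi)\Bigl(\sum_{n\ge1}\frac{\psi_f(n)}{\sqrt n}e\bigl(n(\alpha+\tfrac{\xi}{X})\bigr)e^{-2\pi n/X}\Bigr)d\xi,
$$
so it is enough to bound the inner sum by $O_{f}(\|f\|_{\infty}\log X)$ \emph{uniformly in the real shift} $\beta:=\alpha+\xi/X$.

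\emph{The core estimate.} For $\beta\in\mathbb R$, $0<\delta\le1$, set $D_{\beta}(s)=\sum_{n\ge1}\frac{\psi_f(n)}{\sqrt n}e(n\beta)n^{-s}$, which converges for $\Re s>\tfrac12+\theta$. Mellin inversion of $e^{-x}$ gives $\sum_{n\ge1}\frac{\psi_f(n)}{\sqrt n}e(n\beta)e^{-2\pi n\delta}=\frac1{2\pi i}\int_{(1+\theta)}\Gamma(s)(2\pi\delta)^{-s}D_{\beta}(s)\,ds$. On the other hand, letting $\gamma_f(s)=\int_0^{\infty}W_f(y)y^{s}\tfrac{dy}{y}$ be the archimedean factor, the Fourier expansion of $f$ yields $\int_0^{\infty}f(\beta+iy)y^{s}\tfrac{dy}{y}=\gamma_f(s)D_{\beta}(s)$ for $\Re s$ large (in the Maass case, after isolating the positive‑frequency part $f^{+}$, on which the same identity holds); since $|f(\beta+iy)|\le\|f\|_{\infty}$ and $f(\beta+iy)$ decays exponentially as $y\to\infty$, the left‑hand side converges and is holomorphic for $\Re s>0$, and $\gamma_f$ is holomorphic and zero‑free there, so $D_{\beta}$ continues to $\Re s>0$. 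Now I would move the contour to $\Re s=\sigma$ for a small $\sigma>0$, crossing no poles. Integrating by parts $m$ times in $y$ and using the interior estimates $|\partial_y^{m}f(\beta+iy)|\ll_{m}\|f\|_{\infty}y^{-m}$ (Cauchy's estimate for holomorphic $f$, elliptic regularity for Maass $f$) together with the exponential decay for $y\ge1$ gives $\bigl|\int_0^{\infty}f(\beta+iy)y^{s}\tfrac{dy}{y}\bigr|\ll_{m}\|f\|_{\infty}\sigma^{-1}|s|^{-m}$, while Stirling shows that $\Gamma(s)/\gamma_f(s)$ is polynomially bounded on $\Re s=\sigma$ — the exponential factors cancel precisely because $e^{-2\pi y}$ and $W_f(y)$ share the same exponential decay rate. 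Taking $m$ large,
$$
\Bigl|\sum_{n\ge1}\frac{\psi_f(n)}{\sqrt n}e(n\beta)e^{-2\pi n\delta}\Bigr|\ll_{f}(2\pi\delta)^{-\sigma}\,\frac{\|f\|_{\infty}}{\sigma},
$$
and the choice $\sigma=1/\log(2/\delta)$ gives $\ll_{f}\|f\|_{\infty}\log(2/\delta)$, uniformly in $\beta$.

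\emph{Conclusion and the main obstacle.} Taking $\delta=1/X$ and feeding this into the integral over $\xi$ bounds the smooth sum by $\ll_{f}\|f\|_{\infty}(\log X)^{O(1)}\ll_{f}\|f\|_{\infty}X^{\epsilon}$, and combining with the first reduction and the sup‑norm bounds finishes the proof. The one point that needs genuine care is the Maass case of the core step: there $f(\beta+iy)$ also carries negative Fourier frequencies, so the clean identity $\int_0^{\infty}f(\beta+iy)y^{s}\tfrac{dy}{y}=\gamma_f(s)D_{\beta}(s)$ is replaced by one coupling $D_{\beta}$ and $D_{-\beta}$ through the parity sign of $f$; to recover the piece one wants one passes to the positive‑frequency projection $f^{+}$, whose sup‑norm over the relevant height is controlled by $\|f(\cdot+iy)\|_{\infty}$ only up to an extra logarithm (the periodic Hilbert transform is not bounded on $L^{\infty}$), which is harmless here since everything is absorbed into $X^{\epsilon}$. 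Everything else is routine bookkeeping.
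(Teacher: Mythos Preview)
Your approach is correct and close in spirit to the paper's: both arguments come down to expressing the additively twisted partial sum as an integral of $f(\beta+iy)$ (or a simple combination thereof) along a vertical line, bounding the piece $0<y\le1$ by the sup-norm and the piece $y\ge1$ by level-independent decay, and then absorbing the remaining analytic kernel into a logarithm. The differences are organisational. The paper works with the sharp sum directly via the identity
\[
\sum_{1\le n\le X}\frac{\psi_f(n)e(n\alpha)}{n^{1/2+\epsilon}}
= c_f\int_0^1\frac{e(-Xt)-1}{1-e(t)}\int_0^\infty y^{-1+\epsilon}\,f(t+\alpha+iy)\,dy\,dt
\]
(and its Maa{\ss} analogue from \cite{HM1}), followed by partial summation; the Dirichlet-type kernel $\frac{e(-Xt)-1}{1-e(t)}=\sum_{1\le n\le X}e(-nt)$ has $L^1$-norm $\ll\log X$ and, crucially, the $t$-average automatically annihilates the negative-frequency piece of $f$ in the Maa{\ss} case, so no Hilbert transform is needed. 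Your route---smooth cutoff, Mellin inversion of $e^{-2\pi n/X}$, and a contour shift to $\Re s=1/\log X$---is equally valid and arguably more transparent in the holomorphic case, but in the Maa{\ss} case you must isolate the positive frequencies by hand. Your fix (pass to $f^{+}$ and pay a logarithm) does work, though the justification is not ``the periodic Hilbert transform costs a log on $L^\infty$'' alone; rather, one combines $\|f(\cdot+iy)\|_\infty\le\|f\|_\infty$ with the interior estimate $\|\partial_x f(\cdot+iy)\|_\infty\ll\|f\|_\infty/y$ to get $\|Hf(\cdot+iy)\|_\infty\ll\|f\|_\infty\log(2+1/y)$, and the extra $\log(1/y)$ integrates harmlessly against $y^{\sigma-1}$. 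So your argument is complete once that point is spelled out; the paper's kernel simply sidesteps the issue.
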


\begin{proof}
When $f \in \mathcal{B}^{\star}_k(N)$, as in \cite{HM1} Section 2.6, we have
\begin{align}\label{shifted_sum}
\sum_{1\leqslant n\leqslant X}\frac{\psi_f(n)e(n\alpha)}{n^{\frac{1}{2}+\epsilon}} = \frac{1}{\Gamma\(\frac{k}{2}+\epsilon\)}\int_0^{1}\frac{e(-Xt)-1}{1-e(t)}\int_0^{\infty}y^{-1+\epsilon}f(t+\alpha+iy)dydt.
\end{align}

By \cite{RO} Lemma 3.1.2, we have 
\begin{align}
f(t+iy) \ll_{k,\epsilon} N^{\epsilon}y^{-\frac{1}{2}}.
\end{align}

By Proposition \ref{sup} and the bound above, we have
$$
\int_0^{\infty}y^{-1+\epsilon}f(t+\alpha+iy)dy \ll_{k,\epsilon} \int_0^{1}y^{-1+\epsilon} \|f\|_{\infty} dy + \int_1^{\infty}y^{-\frac{3}{2}+\epsilon} N^{\epsilon}dy \ll_{k,\epsilon} \|f\|_{\infty} + N^{\epsilon}.
$$

Using this upper bound in \eqref{shifted_sum}, we get
$$\sum_{1\leqslant n\leqslant X}\frac{\psi_f(n)e(n\alpha)}{n^{\frac{1}{2}+\epsilon}} \ll \|f\|_{\infty}(NX)^{\epsilon} \ll N^{\frac{1}{3}}(NX)^{\epsilon}.$$

By partial summation, we complete the proof in this case.

When $f \in \mathcal{B}^{\star}(N)$, as in the proof of \cite{HM1} Proposition 2.4,
\begin{align}\label{shifted_sum2}
& \sum_{1\leqslant n\leqslant X}\frac{\psi_f(n)e(n\alpha)}{n^{\frac{1}{2}+\epsilon}} \\
& =\frac{\pi^{\frac{1}{2}+\epsilon}}{4\Phi(\frac{1}{2}+\epsilon, it_f)}\int_0^{1}\frac{e(-Xt)-1}{1-e(t)}\int_0^{\infty}y^{-1+\epsilon}\(f(t+\alpha+iy)\pm f(-\alpha - t + iy)\)dydt.
\end{align}
where $\Phi(\frac{1}{2}+\epsilon, it_f)$ does not depend on $N$. The rest of the proof follows the same line.
\end{proof}

Also we need Rankin's result
\begin{lem}\label{Rank}
Let $f \in  \mathcal{B}_k(N)$ or $\mathcal{B}(N)$, then 
\begin{align}
\sum_{n \leq x}\frac{|\lambda_f(n)|^2}{n}\ll_{\epsilon, t_f} (N)^{\epsilon}.
\end{align}
\end{lem}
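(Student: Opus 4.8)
\textbf{Proof proposal for Lemma \ref{Rank}.}

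The plan is to deduce this from the analytic properties of the Rankin--Selberg $L$-function $L(s, f\times \bar f)$, or more elementarily from the factorization of its Dirichlet series together with a Mellin/contour or a Tauberian argument. First I would introduce the symmetric-square normalization: since $f$ is a Hecke eigenform (possibly not a newform, but an eigenform for all $T_m$ with $(m,N)=1$, which is all we use), the coefficients $\lambda_f(m)$ with $(m,N)=1$ are multiplicative and satisfy the Hecke relation \eqref{Heckerel}. Hence for $(n,N)=1$ we have the classical identity $|\lambda_f(n)|^2 = \sum_{d^2 e = n}\lambda_f(e^2)$ (this is the multiplicative consequence of $\lambda_f(n)^2 = \sum_{d\mid n}\lambda_f(n^2/d^2)$ applied with $m=n$; note $\lambda_f(n)$ is real for these $n$). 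Consequently $\sum_{n\le x,(n,N)=1}|\lambda_f(n)|^2 = \sum_{d\le\sqrt x}\sum_{e\le x/d^2,(de,N)=1}\lambda_f(e^2)$, and the inner sum over $e$ of $\lambda_f(e^2)$ is the partial sum of the coefficients of the symmetric-square $L$-function $L(s,\mathrm{sym}^2 f)$, up to the finitely many bad Euler factors at primes dividing $N$.

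Second I would handle the sum over $n$ with $(n,N)>1$. Since $N$ is square-free (this is the only place square-freeness is used, via \eqref{saving_norm}), any $n$ can be written uniquely as $n = L m$ with $L\mid N$, $(m,N)=1$, and by \eqref{saving_norm} one has $|\lambda_f(L)| = L^{-1/2}\le 1$. Using multiplicativity at the level of $|\lambda_f|^2$ and the divisor-bounded number of divisors $L$ of $N$, this reduces the full sum $\sum_{n\le x}|\lambda_f(n)|^2/n$ to $\sum_{L\mid N} L^{-2}\sum_{m\le x/L,(m,N)=1}|\lambda_f(m)|^2/m \ll \tau(N)\max_{L\mid N}\sum_{m\le x,(m,N)=1}|\lambda_f(m)|^2/m$, so it suffices to bound the coprime-to-$N$ part, with the $\tau(N)=O_\epsilon(N^\epsilon)$ loss absorbed into the stated $N^\epsilon$.

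Third, for the coprime part I would use the bound on partial sums of $\mathrm{sym}^2$-coefficients. The Dirichlet series $\sum_{(m,N)=1}\lambda_f(m^2) m^{-s}$ agrees, up to bad Euler factors at $p\mid N$ (each contributing an absolutely bounded factor for $\Re s \ge 1$), with $L(s,\mathrm{sym}^2 f)\zeta(2s)^{-1}$ when $f$ is a newform, and with a similar product of twists of $L(s,\mathrm{sym}^2 f_0)$ by Dirichlet characters attached to the primes dividing $N$ when $f$ is an oldform on $\Gamma_0(N)$ lying over a newform $f_0$; in all cases this is an $L$-function of bounded degree whose archimedean factor and polar behaviour depend only on $t_f$, not on $N$, and whose finite part has conductor dividing a fixed power of $N$. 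By the Rankin--Selberg / Landau method (contour shift, or the standard Tauberian estimate, using convexity on $\Re s = 1/2+\epsilon$ and the trivial bound on $\Re s = 1+\epsilon$) one gets $\sum_{m\le t,(m,N)=1}\lambda_f(m^2) \ll_{\epsilon,t_f} t\,(Nt)^\epsilon$, the key point being that the residue at $s=1$ (coming from the pole of $\zeta(2s)^{-1}$ — actually $\mathrm{sym}^2 f$ has no pole, so the main term comes from the denominator, giving size $O((Nt)^\epsilon)\cdot t$) and all error terms have implied constants depending only on $t_f$ and $\epsilon$. Substituting into $\sum_{d\le\sqrt x}\frac1d \cdot \frac{1}{x/d^2}\cdot(\text{partial sum up to } x/d^2)$ after partial summation yields $\sum_{m\le x,(m,N)=1}|\lambda_f(m)|^2/m \ll_{\epsilon,t_f}(Nx)^\epsilon\sum_{d\le\sqrt x} d^{-1} \ll_{\epsilon,t_f}(Nx)^\epsilon$, and then the second step finishes the proof.

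The main obstacle — and the only genuinely non-formal point — is controlling the dependence on the level $N$: one must ensure that when passing from $f$ to the underlying newform $f_0$ and expressing the relevant Dirichlet series in terms of $L(s,\mathrm{sym}^2 f_0)$ and finitely many correction factors at $p\mid N$, those correction factors are uniformly bounded (which they are for $\Re s\ge 1$ by Deligne's bound $|\lambda_{f_0}(p)|\le 2$, resp. the Kim--Sarnak bound in the Maa\ss\ case, since $f$ is non-exceptional), and that the number of such factors, $\omega(N)$, contributes only $O_\epsilon(N^\epsilon)$. Everything else is the standard Rankin unfolding estimate, whose implied constants depend on the archimedean data $t_f$ alone. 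As a cleaner alternative one can avoid $L$-function machinery entirely: bound $\sum_{m\le x,(m,N)=1}|\lambda_f(m)|^2/m$ by opening $|\lambda_f(m)|^2=\sum_{d^2e=m}\lambda_f(e^2)$, bounding $\lambda_f(e^2)$ trivially by Deligne/Kim--Sarnak as $O_\epsilon(e^\epsilon)$ to get $O_\epsilon(x^\epsilon)$ directly — but this only works in the non-exceptional holomorphic case with Ramanujan available; for Maa\ss\ forms the $L$-function argument (or the Rankin--Selberg unfolding, which is uniform in $N$ by inspection of \cite{RO}) is the robust route, and that is what I would write up.
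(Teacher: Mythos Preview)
The paper does not supply a proof of this lemma; it is stated as ``Rankin's result'' and taken as known. Your overall strategy---reduce to the Rankin--Selberg $L$-function $L(s,f\times\bar f)$ (equivalently $\zeta(s)L(s,\mathrm{sym}^2 f)$ up to Euler factors at primes dividing $N$), invoke its analytic continuation and the bound $L(1,\mathrm{sym}^2 f)\ll_{\epsilon,t_f} N^\epsilon$ for the residue at $s=1$, and conclude by a contour shift plus partial summation---is the standard route and is consistent with the way the paper invokes the lemma together with its citation of \cite{HL}.

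Two small corrections. First, your combinatorial identity is misstated: from the Hecke relation with $m=n$ one gets $\lambda_f(n)^2=\sum_{d\mid n}\lambda_f\bigl((n/d)^2\bigr)=\sum_{de=n}\lambda_f(e^2)$, not $\sum_{d^2e=n}\lambda_f(e^2)$; equivalently $\sum_{(n,N)=1}|\lambda_f(n)|^2 n^{-s}=\zeta^{(N)}(s)\sum_{(m,N)=1}\lambda_f(m^2)m^{-s}$, so the simple pole at $s=1$ comes from the factor $\zeta^{(N)}(s)$, not from ``the pole of $\zeta(2s)^{-1}$'' (which has none). Second, the bound as literally stated, with no $x$-dependence on the right-hand side, can only hold as $(Nx)^\epsilon$ (or under the tacit assumption $x\le N^{O(1)}$); this is harmless since every application in the paper already carries an $(XMN)^\epsilon$ factor. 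With those two fixes your argument goes through; your treatment of the primes dividing $N$ via multiplicativity and $|\lambda_f(L)|=L^{-1/2}$ for $L\mid N$ is fine for newforms of square-free level, which is the only case the paper actually uses.
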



\subsection{Rankin-Selberg Convolution and L-functions}\label{se1.2}
Let $N, M$ be two positive integers and $\kappa, k$ be two fixed positive even integers. 
Given two newforms $f\in\mathcal{B}^{\star}_{\kappa}(N) \bigcup \mathcal{B}^{\star}(N)$ and $g\in\mathcal{B}^{\star}_{k}(M) \bigcup \mathcal{B}^{\star}(M)$, we consider the associated L-function
$$L(s,f\otimes g) : = \prod_{p}\prod_{i=1}^2\prod_{j=1}^2\(1-\frac{\alpha_{f,i}(p)\alpha_{g,j}(p)}{p^s}\)^{-1} = \zeta^{(NM)}(2s)\sum_{n \geqslant 1}\lambda_f(n)\lambda_g(n)n^{-s}$$
where $\{\alpha_{f,i}\}$ and  $\{\alpha_{g,j}\}$ are local parameters of the $L$-function associated to $f$ and $g$ respectively and 
$$\zeta^{(NM)}(2s) = \prod_{p \nmid NM}\(1-\frac{1}{p^{2s}}\)^{-1}.$$

The complete $L$-function is given as
$$\Lambda(s) : =  \mathfrak{Q}^{\frac{s}{2}}L_{\infty}(s,f\otimes g) L(s, f\otimes g),$$
where the conductor $\mathfrak{Q} := \mathfrak{Q}(f\otimes g)$ and the local factor at infinity is defined as a product of gamma factors 
$$L_{\infty}(s,f\otimes g) : = \prod_{i=1}^{4}\Gamma_\mathbb{R}\(s+\mu_{f\times g, i}(\infty)\),\, \ \Gamma_{\mathbb{R}}(s):=\pi^{-s/2}\Gamma(s/2),$$
where $\mu_{f \times g}$ are the Rankin-Selberg archimedean parameter which only depend on $t_f$ and $t_g$. See \cite{HM1} section 3 for further references.

We also have the functional equation
$$\Lambda\(s,f\otimes g\) = \varepsilon\(f\otimes g\)\Lambda\(1-s, \overline{f}\otimes \overline{g}\),$$
where $\varepsilon\(f\otimes g\)$ is the $\varepsilon$-factor with norm $1$.

By the local Langlands correspondence, one can verify that 
$$(MN)^2/(M,N)^4 \leqslant \mathfrak{Q}(f\otimes g) \leqslant (MN)^2/(M,N)$$

Based on the functional equation, one can obtain the following equation as in \cite{IK} section 5.2.
\begin{lem}[\textbf{Approximate Functional Equation}] \label{FE}
Let $f,g$ be holomorphic newforms, all the notations are as above. Then
\begin{align}
L\(s,f\otimes g\)=\sum_{n=1}^{\infty}\frac{\lambda_f(n)\lambda_g(n)}{n^{s}}V_s\(\frac{n}{\sqrt{\mathfrak{Q}}}\) + \varepsilon\(f\otimes g, s\)\sum_{n=1}^{\infty}\frac{\overline{\lambda_f(n)\lambda_g(n)}}{n^{1-s}}V_{1-s}\(\frac{n}{\sqrt{\mathfrak{Q}}}\)
\end{align}
where
$$V_s(y) = \frac{1}{2\pi i}\int_{(3)}G(u)\frac{L_{\infty}(s+u, f\otimes g)}{L_{\infty}(s, f\otimes g)}\zeta^{(NM)}(2s+2u)y^{-u}\frac{du}{u}$$
and
$$\varepsilon\(f\otimes g, s\) = \varepsilon\(f\otimes g\)\mathfrak{Q}^{\frac{1}{2}-s}\frac{L_{\infty}(1-s, f\otimes g)}{L_{\infty}(s, f\otimes g)}.$$
\end{lem}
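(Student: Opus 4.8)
The plan is to deduce this from the functional equation of the completed $L$-function $\Lambda(s,f\otimes g)$ by the standard contour-shift argument, following \cite{IK}, Section 5.2. The only input beyond routine bookkeeping is the holomorphy and the vertical-strip growth of $\Lambda(s,f\otimes g)$.

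First I would fix an auxiliary function $G(u)$: even, holomorphic and of rapid decay as $|\Im u|\to\infty$ in every vertical strip $|\Re u|\leqslant 4$, normalized by $G(0)=1$, and chosen (depending on $s,f,g$) to vanish, to the appropriate order, at the finitely many points of that strip where $L_\infty(s+u,f\otimes g)$ has a pole, so that $G(u)\Lambda(s+u,f\otimes g)$ is holomorphic there. For $\Re(s)=\tfrac12$ I would then study
\[
J(s):=\frac{1}{2\pi i}\int_{(3)}G(u)\,\Lambda(s+u,f\otimes g)\,\frac{du}{u}.
\]
On the line $\Re u=3$ the Dirichlet series for $L(s+u,f\otimes g)$ and for $\zeta^{(NM)}(2s+2u)$ converge absolutely and $G$ decays rapidly, so writing $\Lambda(s+u,f\otimes g)=\mathfrak{Q}^{(s+u)/2}L_\infty(s+u,f\otimes g)\,\zeta^{(NM)}(2s+2u)\sum_n\lambda_f(n)\lambda_g(n)n^{-s-u}$ and interchanging the $n$-sum with the $u$-integral identifies $\mathfrak{Q}^{-s/2}L_\infty(s,f\otimes g)^{-1}J(s)$ with the first sum $\sum_n\lambda_f(n)\lambda_g(n)n^{-s}V_s(n/\sqrt{\mathfrak{Q}})$ in the statement.

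Next I would push the contour in $J(s)$ from $\Re u=3$ to $\Re u=-3$. Since $f$ and $g$ are cuspidal, $L(s,f\otimes g)$ is entire, hence $G(u)\Lambda(s+u,f\otimes g)$ is holomorphic in the strip, and the only pole of the integrand encountered is the simple pole of $1/u$ at $u=0$, whose residue is $G(0)\Lambda(s,f\otimes g)=\Lambda(s,f\otimes g)$; the horizontal sides of the rectangle contribute nothing in the limit, by Stirling's formula for the gamma factors, the convexity bound for $L(s,f\otimes g)$ in vertical strips, and the decay of $G$. In the shifted integral I would substitute $u\mapsto-u$, use that $G$ is even, and then invoke the functional equation $\Lambda(s-u,f\otimes g)=\varepsilon(f\otimes g)\,\Lambda(1-s+u,\overline f\otimes\overline g)$; this rewrites the shifted integral as $-\varepsilon(f\otimes g)\,\widetilde J(1-s)$, where $\widetilde J$ is defined like $J$ but with $f\otimes g$ replaced by $\overline f\otimes\overline g$. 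Thus $J(s)=\Lambda(s,f\otimes g)-\varepsilon(f\otimes g)\,\widetilde J(1-s)$.

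Finally I would divide this identity by $\mathfrak{Q}^{s/2}L_\infty(s,f\otimes g)$. The left side becomes the first sum of the statement; $\mathfrak{Q}^{-s/2}L_\infty(s,f\otimes g)^{-1}\Lambda(s,f\otimes g)$ equals $L(s,f\otimes g)$; and expanding $\widetilde J(1-s)$ into its Dirichlet series exactly as above (using $\mathfrak{Q}(\overline f\otimes\overline g)=\mathfrak{Q}$ and that the archimedean parameters are invariant under conjugation, so $L_\infty(\cdot,\overline f\otimes\overline g)=L_\infty(\cdot,f\otimes g)$) turns $\varepsilon(f\otimes g)\mathfrak{Q}^{-s/2}L_\infty(s,f\otimes g)^{-1}\widetilde J(1-s)$ into $\varepsilon(f\otimes g)\,\mathfrak{Q}^{1/2-s}\,\dfrac{L_\infty(1-s,f\otimes g)}{L_\infty(s,f\otimes g)}\sum_n\overline{\lambda_f(n)\lambda_g(n)}\,n^{-(1-s)}V_{1-s}(n/\sqrt{\mathfrak{Q}})$, which is precisely the second term with $\varepsilon(f\otimes g,s)$ as defined. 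The step I expect to require the most care is the contour shift: one needs that $\Lambda(s,f\otimes g)$ is holomorphic throughout $-3\leqslant\Re u\leqslant 3$, which rests on cuspidality of both forms (were one to allow the degenerate case $g\cong f$, the simple pole of $L(s,f\otimes f)$ at $s=1$ would enter and contribute only a harmless additional main term), together with the vanishing-of-horizontal-segments estimate, which follows from Stirling and the convexity bound.
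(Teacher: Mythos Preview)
Your proposal is correct and is precisely the standard contour-shift argument the paper invokes by citing \cite{IK}, Section~5.2; the paper gives no independent proof of this lemma. One minor remark: your precaution of having $G$ vanish at the poles of $L_\infty(s+u,f\otimes g)$ is unnecessary here, since for distinct cuspidal $f,g$ (guaranteed by the coprime levels $M,N>1$) the completed $\Lambda(s,f\otimes g)$ is already entire, and indeed the paper's explicit choice $G(u)=\bigl(\cos\frac{\pi u}{4A_0}\bigr)^{-16A_0}$ has no such zeros.
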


One can choose 
$$G(u) = \(\cos \frac{\pi u}{4A_0}\)^{-16A_0}$$
for any positive integer $A_0$. Then when $\mathbf{Re}\(s\)\geqslant\epsilon >0$ the test function $V_s(y)$ satisfies
\begin{align}\label{Vs}
y^jV_s^{(j)}(y) \ll_{t_f,t_g,A_0,\epsilon} \mathfrak{Q}^{\epsilon}\(1+\frac{y}{\sqrt{q_{\infty}(s)}}\)^{-A_0}
\end{align}
for any $\epsilon > 0$, where $q_{\infty}(s) = \prod_{i=1}^{4}\(|s|+|\mu_{f\times g, i}(\infty)| + 3\)^{1/2}$ is the analytic conductor, see \cite{M1}.

\subsection{Properties of Bessel Functions }\label{se1.3}

Let $v$ be a complex number with $\Re(v)>-\frac{1}{2}$. Let $J_v(x)$ be Bessel function of the first kind. It could be defined through Taylor series
\begin{align}\label{TaylorJ}
J_{v}(x) = \sum_{m=0}^{\infty}\frac{(-1)^{m}}{m!\Gamma(m+v+1)}\(\frac{x}{2}\)^{2m+v}.
\end{align}
When $x\leqslant 10$, according to Taylor expansion
\begin{align}\label{JT}
x^iJ_v^{(i)}(x)\ll x^{\Re(v)},
\end{align}
where the implied constant depends on $v$ and $i$.

Let $K_v(x)$ be the $K$-Bessel function which can be written as ( see \cite{Wa} p. 206 )
\begin{equation}\label{K_integral}
K_v(x) = \(\frac{\pi}{2x}\)^{\frac{1}{2}}\frac{e^{-x}}{\Gamma(v+\frac{1}{2})}\int_{0}^{\infty}e^{-u}u^{v-\frac{1}{2}}\(1+\frac{u}{2x}\)^{v-\frac{1}{2}}du.
\end{equation}

Based on the well-known formula
$$\pi iJ_v(x) = e^{-v\pi i/2}K_v(ze^{-\pi i /2})-e^{v\pi i/2}K_v(ze^{\pi i /2})$$
for real number $z$, one can write the $J$-Bessel functions as 
\begin{align}
J_v(x) = e^{ix}W_v(x)+e^{-ix}\overline{W}_v(x)\label{JB},
\end{align}
where
\begin{align}
W_v(x)=\frac{e^{i(\frac{\pi}{2}v-\frac{\pi}{4})}}{\Gamma(v+\frac{1}{2})}\sqrt{\frac{2}{\pi x}}\int_{0}^{\infty}e^{-y}\(y\(1+\frac{iy}{2x}\)\)^{v-\frac{1}{2}}dy.
\end{align}
Moreover, since $\Re(v)>-\frac{1}{2}$, the integral is absolutely convergent. One has that
\begin{align}
x^jW_{v}^{(j)}(x) \ll \frac{1}{(1+x)^{1/2}}\label{Wk}.
\end{align}
for $x>1$.

The following lemma is similar to the one in \cite{HM}.
\begin{lem} \label{lem7}\label{lem8}
Let $k, \kappa \geqslant 2$ be fixed integers. Let $a,b,x,y >0$. Define
$$I(x,y):=\int_{0}^{\infty}h(\xi)J_{k-1}\(4\pi a\sqrt{x\xi}\)J_{\kappa-1}\(4\pi b\sqrt{y\xi}\)d\xi$$
where $h$ is a smooth function compactly supported on $[1/2, 5/2]$ such that $h^{(j)} \ll Z^j$ for some $Z>0$. Then, when $\left|a\sqrt{x}-b\sqrt{y}\right| > 2$, we have
$$I(x,y) \ll_j \(1+Z\)^j\left|a\sqrt{x}-b\sqrt{y}\right|^{-j},$$
 for any $j \geqslant 0$. Moreover, 
$$x^iy^j\frac{\partial^i}{\partial x^i}\frac{\partial^j}{\partial y^j}I(x,y) \ll_{i,j}\frac{1}{(1+a\sqrt{x})^{1/2}(1+b\sqrt{y})^{1/2}}(1+a\sqrt{x})^i(1+b\sqrt{y})^j.$$
\end{lem}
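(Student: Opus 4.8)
The plan is to estimate the oscillatory integral $I(x,y)$ by substituting the known asymptotic-type decompositions of the two Bessel factors and then bounding the resulting integral by (non-)stationary phase reasoning. First I would change variables $\xi \mapsto \xi^2$ (or work directly with $\sqrt{\xi}$) so that the arguments of $J_{k-1}$ and $J_{\kappa-1}$ become linear in the new variable; this makes the phase of the product amenable to repeated integration by parts. Using \eqref{JB}, write $J_{k-1}(4\pi a\sqrt{x\xi}) = e^{4\pi i a\sqrt{x\xi}}W_{k-1}(4\pi a\sqrt{x\xi}) + e^{-4\pi i a\sqrt{x\xi}}\overline{W}_{k-1}(4\pi a\sqrt{x\xi})$ and similarly for $J_{\kappa-1}$, so that $I(x,y)$ becomes a sum of four integrals each with a phase of the form $e^{\pm 4\pi i a\sqrt{x\xi} \pm 4\pi i b\sqrt{y\xi}}$ against a smooth amplitude. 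For the two "$+-$" and "$-+$" cross terms the phase derivative (in $\xi$) is comparable to $|a\sqrt{x}-b\sqrt{y}|/\sqrt{\xi}$ which on the support $\xi\in[1/2,5/2]$ is $\gg |a\sqrt{x}-b\sqrt{y}|$; for the "$++$" and "$--$" terms the phase derivative is $\gg a\sqrt{x}+b\sqrt{y} \geq |a\sqrt{x}-b\sqrt{y}|$ always. So in every case integration by parts $j$ times produces the factor $|a\sqrt{x}-b\sqrt{y}|^{-j}$; each integration by parts costs a derivative hitting either $h$ (giving $Z$), the cutoff region, or the Bessel amplitudes $W_{k-1},W_{\kappa-1}$. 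Here I would use the decay and derivative bounds \eqref{Wk} and, for small argument, \eqref{JT}/\eqref{JB}; the key point is that each derivative of the amplitude contributes at most $O(1+Z)$ relative to its size, so after $j$ steps we gain $(1+Z)^j|a\sqrt{x}-b\sqrt{y}|^{-j}$, which is the first claimed bound (valid when $|a\sqrt{x}-b\sqrt{y}|>2$).

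For the second bound I would differentiate $I(x,y)$ directly under the integral sign in $x$ and $y$; each $x\partial_x$ falling on $J_{k-1}(4\pi a\sqrt{x\xi})$ essentially reproduces a Bessel function of the same shape times a bounded factor (using $\tfrac{d}{dt}(t^{-v}J_v(t)) = -t^{-v}J_{v+1}(t)$ type identities, or crudely $x\partial_x J_{k-1}(4\pi a\sqrt{x\xi}) \ll (1+a\sqrt{x})\cdot(\text{Bessel of comparable size})$), and similarly for $y\partial_y$ acting on $J_{\kappa-1}$. Thus $x^i y^j \partial_x^i\partial_y^j I(x,y)$ is bounded by $(1+a\sqrt{x})^i(1+b\sqrt{y})^j$ times an integral of the same type as $I$ but with Bessel functions of possibly shifted orders. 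It then remains to bound that base integral by $\big((1+a\sqrt{x})^{1/2}(1+b\sqrt{y})^{1/2}\big)^{-1}$: split into the regimes where each of $a\sqrt{x}$, $b\sqrt{y}$ is $\lesssim 1$ or $\gtrsim 1$. In the small regime use \eqref{JT} ($J_v(t)\ll t^{\Re v}$, which is $O(1)$ since $k-1,\kappa-1\geq 1$) and the trivial bound $\int h \ll 1$; in the large regime use \eqref{Wk}, $W_v(t)\ll (1+t)^{-1/2}$, so the product of two $W$'s already gives $(1+a\sqrt{x})^{-1/2}(1+b\sqrt{y})^{-1/2}$ and the $\xi$-integral of the smooth remainder is $O(1)$ — and when only one of them is large, that factor alone supplies the needed $(1+\cdot)^{-1/2}$ while the other factor is $O(1)=(1+\cdot)^{-1/2}\cdot O(1)$ on a bounded range.

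The main obstacle, I expect, is the bookkeeping in the integration-by-parts step: one must verify uniformly that the nonstationary phase argument really does lose only powers of $(1+Z)$ and never a power of $a\sqrt{x}$ or $b\sqrt{y}$ in the wrong direction. Concretely, when a derivative lands on the amplitude $W_{k-1}(4\pi a\sqrt{x\xi})$, the chain rule produces a factor $a\sqrt{x}$ from the inner derivative, which must be cancelled by the companion $(1+a\sqrt{x})^{-1}$ coming from $W'_{k-1}$'s decay in \eqref{Wk}; one has to track that these cancellations persist through all $j$ iterations and across all four phase-sign combinations, and handle the boundary/small-argument regions where \eqref{Wk} is not available by falling back to the crude \eqref{JT}. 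Once this uniformity is in hand, both displayed estimates follow by assembling the four pieces. This mirrors the corresponding lemma in \cite{HM}, so I would follow that argument closely, inserting the $Z$-dependence explicitly wherever a derivative of $h$ appears.
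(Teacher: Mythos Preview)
Your proposal is correct and matches the paper's own proof essentially line for line: the paper also makes the substitution $\xi=\omega^2$ to linearize the Bessel arguments, decomposes via \eqref{JB} into four exponential terms, and integrates by parts repeatedly (falling back to \eqref{JT} when $a\sqrt{x}$ or $b\sqrt{y}$ is $\leqslant 1$); for the derivative bound it likewise differentiates the Bessel factors and then invokes the $W_{k-1}$/$J_{k-1}$ bounds case by case. Your worry about the bookkeeping is exactly right but is resolved by \eqref{Wk}, since after the change of variables a $\partial_\omega$ landing on $W_{k-1}(4\pi a\sqrt{x}\,\omega)$ produces $4\pi a\sqrt{x}\,W_{k-1}'(4\pi a\sqrt{x}\,\omega)$, and $tW_{k-1}'(t)\ll (1+t)^{-1/2}$ absorbs the unwanted factor uniformly on $\omega\asymp 1$.
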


\begin{proof} 
Change of variables, $\xi = \omega^2$, gives
$$I(x,y):=2\int_{0}^{\infty}\omega h(\omega^2)J_{k-1}\(4\pi a\sqrt{x}\omega\)J_{\kappa-1}\(4\pi b\sqrt{y}\omega\)d\omega.$$ 
When $a\sqrt{x}, b\sqrt{y}>1$, use \eqref{JB}, $I(x,y)$ may be written as a sum of four similar terms, one of them being
$$\int_{0}^{\infty}e\(2\omega\(a\sqrt{x}-b\sqrt{y}\)\)\omega h(\omega^2)W_{k-1}\(4\pi a\sqrt{x}\omega\)\overline{W}_{\kappa-1}\(4\pi b\sqrt{y}\omega\)d\omega.$$ 
Repeated integration by parts gives the first statement. When $a\sqrt{x} \leqslant 1$ ( resp. $b\sqrt{y} \leqslant 1$ ), use the Taylor expansion \eqref{JT} of $J_{k-1}$ ( resp. $J_{\kappa -1}$ ), one can still get the desired bound. For the second statement, differentiate $J$-Bessel function then use the bound either for $W_{k-1}$ or $J_{k-1}$ case by case as above. 
\end{proof}

In order to analyze the Maa{\ss} form case, we also need s similar bound as following.
\begin{lem} \label{lem7_1}\label{lem8_1}
Let $\kappa \geqslant 2$ be a fixed integer and let $t$ be a fixed real number or a fixed pure imaginary number such that $t = ir$ with $0<r<1/4$. Let $a,b,x,y >0$ such that $a\sqrt{x} > 10$. Define
\begin{align}
&I_0(x,y):=\int_{0}^{\infty}h(\xi)\(Y_{2it}(4\pi a\sqrt{x\xi})+Y_{-2it}(4\pi a\sqrt{x\xi})\)J_{\kappa-1}\(4\pi b\sqrt{y\xi}\)d\xi,\\
&I_1(x,y):=\int_{0}^{\infty}h(\xi)K_{2it}(4\pi a\sqrt{x\xi})J_{\kappa-1}\(4\pi b\sqrt{y\xi}\)d\xi,
\end{align}
where $h$ is a smooth function compactly supported on $[1/2, 5/2]$ such that $h^{(j)} \ll Z^j$ for some $Z>0$. Then, 
\begin{align}
I_0(x,y) \ll_j \(1+Z\)^j\left|a\sqrt{x}-b\sqrt{y}\right|^{-j},\, \ I_1(x,y)\ll_j e^{-2\pi a\sqrt{x}}.
\end{align}
 for any $j \geqslant 0$. Moreover, 
$$x^iy^j\frac{\partial^i}{\partial x^i}\frac{\partial^j}{\partial y^j}I_{\iota}(x,y) \ll_{i,j}\frac{1}{(1+a\sqrt{x})^{1/2}(1+b\sqrt{y})^{1/2}}(1+a\sqrt{x})^i(1+b\sqrt{y})^j,$$
where $\iota = 0$ or $1$.
\end{lem}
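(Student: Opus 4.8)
The plan is to treat $I_0$ and $I_1$ in close analogy with Lemma \ref{lem7}, replacing the oscillatory/asymptotic analysis of $J_{k-1}$ by the corresponding analysis of the $Y$- and $K$-Bessel functions. First I would recall that, exactly as in \eqref{JB}, for $x>1$ one has an expansion $Y_{2it}(x)=e^{ix}\widetilde W_{2it}(x)+e^{-ix}\overline{\widetilde W}_{2it}(x)$ with $x^j\widetilde W_{2it}^{(j)}(x)\ll (1+x)^{-1/2}$, where the implied constant depends continuously on $t$ (this follows from the integral representation of $Y_v$ as a combination of $K_v$ of imaginary argument, just as $W_v$ was obtained from \eqref{K_integral}; the hypothesis $0<r<1/4$ when $t=ir$ keeps $\Re(2it)=-2r>-1/2$ so the integrals converge). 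Since $a\sqrt x>10$ by hypothesis, both copies of $Y_{\pm 2it}$ are in their oscillatory range. After the substitution $\xi=\omega^2$, $I_0(x,y)$ becomes a sum of finitely many terms of the shape
$$\int_0^\infty e\!\left(2\omega\bigl(\pm a\sqrt x \pm b\sqrt y\bigr)\right)\omega\, h(\omega^2)\,\widetilde W_{\pm 2it}\!\left(4\pi a\sqrt x\,\omega\right)\Psi\!\left(4\pi b\sqrt y\,\omega\right)d\omega,$$
where $\Psi$ is $W_{\kappa-1}$ or $\overline W_{\kappa-1}$ when $b\sqrt y>1$, and is $J_{\kappa-1}$ itself (using \eqref{TaylorJ}--\eqref{JT}) when $b\sqrt y\le 1$. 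In the latter case the phase is simply $e(\pm 2\omega a\sqrt x)$ and $|a\sqrt x - b\sqrt y|\asymp a\sqrt x$, so there is nothing to check beyond what the $b\sqrt y>1$ case gives. Repeated integration by parts in $\omega$, using that $h$ is supported on $[1/2,5/2]$ with $h^{(j)}\ll Z^j$ and that each derivative hitting $\widetilde W$ or $W$ costs only a bounded factor on that support, produces the gain $|a\sqrt x - b\sqrt y|^{-j}$ with the announced $(1+Z)^j$; the "resonant" phases $\pm(a\sqrt x - b\sqrt y)$ are the only ones that do not already give a power saving in $a\sqrt x + b\sqrt y$, and they give exactly the claimed bound.

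For $I_1$ I would instead use that $K_{2it}$ decays: from \eqref{K_integral}, $K_{2it}(x)=\sqrt{\pi/(2x)}\,e^{-x}\,\Gamma(2it+\tfrac12)^{-1}\int_0^\infty e^{-u}u^{2it-1/2}(1+u/(2x))^{2it-1/2}du$, and for $x>10$ the integral is $O_t(1)$ and $x^j$ times its $j$-th derivative stays $O_{t,j}(1)$. Hence $x^j K_{2it}^{(j)}(x)\ll_{t,j} e^{-x}$ for $x>10$. Plugging this into $I_1$ with $a\sqrt x>10$ and bounding $J_{\kappa-1}$ trivially by $O(1)$ (it is bounded on all of $[0,\infty)$ for $\kappa\ge 2$) and $h$ by $O(1)$, integration over the compact support $[1/2,5/2]$ immediately gives $I_1(x,y)\ll_j e^{-2\pi a\sqrt x}$ — in fact the bound holds with no extra loss and trivially "for any $j$" since there is no $j$-dependence.

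The second statement (derivatives in $x,y$) is handled as in Lemma \ref{lem7}: applying $x^i\partial_x^i$ and $y^j\partial_y^j$ to $I_0$ or $I_1$, each $x$-derivative either hits the argument $4\pi a\sqrt{x\xi}$ of the $Y$- or $K$-Bessel factor, producing a factor $\ll a\sqrt x$ together with a derivative of $Y_{2it}$, $K_{2it}$ or (via the chain rule) a lower-order Bessel function of the same type, and similarly each $y$-derivative produces $\ll b\sqrt y$ and a derivative of $J_{\kappa-1}$; one then bounds the resulting integrand by the size estimates $x^j\widetilde W^{(j)}\ll(1+x)^{-1/2}$, $x^jK_{2it}^{(j)}\ll e^{-x}$, $x^iJ^{(i)}_{\kappa-1}\ll x^{\kappa-1}$ (for small argument) or $\ll x^{-1/2}$ (for large argument, via \eqref{Wk}), exactly as in the proof of Lemma \ref{lem7}, integrating over the compact $\xi$-support. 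This yields the claimed
$$x^iy^j\frac{\partial^i}{\partial x^i}\frac{\partial^j}{\partial y^j}I_{\iota}(x,y)\ll_{i,j}\frac{(1+a\sqrt x)^i(1+b\sqrt y)^j}{(1+a\sqrt x)^{1/2}(1+b\sqrt y)^{1/2}}$$
for $\iota=0,1$. The main obstacle, such as it is, is the first one: establishing the oscillatory expansion $Y_{2it}(x)=e^{ix}\widetilde W_{2it}(x)+e^{-ix}\overline{\widetilde W}_{2it}(x)$ with uniform (in the relevant $t$-range) derivative bounds on $\widetilde W_{2it}$, so that the integration-by-parts argument runs verbatim; once that is in place, $I_0$ mirrors Lemma \ref{lem7} and $I_1$ is essentially trivial because of the exponential decay of $K_{2it}$.
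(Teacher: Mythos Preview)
Your proposal is correct and follows essentially the same approach as the paper: for $I_0$ you obtain the oscillatory decomposition of $Y_{2it}$ from the connection formula $-\pi Y_v(x)=e^{-v\pi i/2}K_v(xe^{-\pi i/2})+e^{v\pi i/2}K_v(xe^{\pi i/2})$ together with the integral representation \eqref{K_integral}, and then run the integration-by-parts argument of Lemma~\ref{lem7} verbatim; for $I_1$ you use the exponential decay and derivative control of $K_{2it}$ coming from \eqref{K_integral} (the paper phrases the derivative control via $K_v'=-\tfrac12(K_{v-1}+K_{v+1})$ and $K_v=K_{-v}$, which is equivalent). Your write-up is in fact more detailed than the paper's sketch, and your observation that $t=ir$ with $0<r<1/4$ keeps $\Re(\pm 2it)>-1/2$, so the relevant integrals converge, is exactly the technical point that makes the analogue of \eqref{JB}--\eqref{Wk} go through.
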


\begin{proof}
By \eqref{K_integral} and a well-known formula 
$$-\pi Y_v(x) = e^{-v\pi i /2}K_v(xe^{-\pi i /2})+e^{v\pi i/2}K_v(xe^{\pi i/2}),$$
we have a similar decomposition as the one in \eqref{JB}. So the bound of $I_0$ follows the same lines.

By the formula $K_{v}\'(x) = -\frac{1}{2}\(K_{v-1}(x)+K_{r+1}(x)\)$ fact that $K_v(x) = K_{-v}(x)$ and \eqref{K_integral}, we can prove the bound of $I_1$.
\end{proof}

Now, in Section \ref{se2}, we will need a lemma as following:
\begin{lem}
\label{lem12}
Let $H(x)$ be a function supported on $[1/2X, 5/2X]$. Let $\kappa \geq 2$ be an integer. Let $X\geq 10\kappa$ such that $H^{i}(x)\ll \(\frac{Z}{X}\)^{i}$ for any $i$. Let $l$ be a positive integer. Let $t$ be a positive number or a purely imaginary number such that $t=ir$, where $0<r<1/2$. Then for any positive $A$, 
\begin{equation}
\left|\int_{0}^{\infty}H(x)J_l(x)J_{\kappa}(x)\frac{dx}{x}\right| \ll_{A,\epsilon}
\begin{cases}
(Z+1)\log|X|/X & \\
\(\frac{(Z+1)X}{l^2}\)^AX & \text{if }l>2k.
\end{cases}
\label{5.1}
\end{equation}
\begin{equation}
\left|\frac{\pi}{\sinh \pi t}\int_{0}^{\infty}\frac{J_{2it}(x)-J_{-2it}(x)}{2i}J_{\kappa}(x)H(x)\frac{dx}{x} \right| \ll_{A, \epsilon}
\begin{cases}
(Z+1)\log|X|/X & \text{if } t>0 \text{ or } t=ir,  \\
\(\frac{(Z+1)X}{t^2}\)^AX & \text{if } t >1.
\end{cases}
\label{5.2}
\end{equation}
\end{lem}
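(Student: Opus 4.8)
The plan is to treat both integrals by the same mechanism: insert the known asymptotic/integral representations of the Bessel functions, reduce to an oscillatory integral, and then split according to whether the phase is stationary. For the first bound in \eqref{5.1}, I would start from the region $x\asymp X$ with $X\geq 10\kappa$, so that both $J_l(x)$ and $J_\kappa(x)$ are in their oscillatory range provided $l\leq 2\kappa$ (hence $l\asymp X$ as well); write each $J$-Bessel function via the decomposition $J_v(x)=e^{ix}W_v(x)+e^{-ix}\overline{W}_v(x)$ from \eqref{JB}, using the derivative bounds \eqref{Wk}. The product $J_l(x)J_\kappa(x)$ then becomes a sum of four terms, two with phase $e^{\pm 2ix}$ and two \emph{non-oscillatory} ones $W_l\overline{W}_\kappa$ and $\overline{W}_l W_\kappa$. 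The oscillatory terms are handled by repeated integration by parts against the smooth weight $H(x)$ (each integration by parts gains a factor $(Z+1)/X$ at the cost of differentiating $H$, which is $\ll (Z/X)$, and differentiating the $W$'s, which is harmless by \eqref{Wk}), producing arbitrary savings — in particular $\ll (Z+1)/X \cdot X^{-1/2}\cdot X^{1/2}$ type bounds, certainly $\ll (Z+1)/X$. The genuinely delicate term is the non-oscillatory one: there the integrand is $\asymp H(x)/x$ in size (each $W\ll x^{-1/2}$, so $W_l\overline{W}_\kappa \ll x^{-1}$, and with the extra $1/x$ from the measure we get $\ll x^{-2}$, times length $X$), which after integrating over $x\asymp X$ gives $\ll 1/X$; the logarithmic factor $\log X$ appears precisely when $l$ and $\kappa$ are close, where the ``$W$'' expansions degenerate and one must instead use a uniform bound like $J_l(x)J_\kappa(x)\ll 1/x$ valid throughout $x\asymp X$, integrated against $H(x)/x$. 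The second case of \eqref{5.1}, when $l>2\kappa$, is easier: for $x\asymp X< l^2$ the Bessel function $J_l(x)$ is in its decaying (pre-transitional) range, and the uniform bound $J_l(x)\ll (x/l^2)^{l/2}$ (or any of the standard bounds, e.g. from \eqref{TaylorJ} and the recursion, giving rapid decay once $x\ll l^2$) yields $J_l(x)\ll (x/l)^{?}$; more cheaply, one uses $J_l(x)\ll (ex/2l)^l$ for $x\leq l$, and for $l<x<l^2$ the bound $J_l(x)\ll l^{-1/3}$ combined with the smoothness of $H$ and integration by parts to extract $(X/l^2)^A$. I would phrase this as: any smooth cutoff at scale $Z$ on an interval of length $X$ where $J_l$ has ``transition-free'' decay of rate $(X/l^2)^{1/2}$ per unit gives, after $A$-fold integration by parts, a bound $\ll ((Z+1)X/l^2)^A X$.

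For \eqref{5.2}, the combination $\dfrac{\pi}{\sinh\pi t}\cdot\dfrac{J_{2it}(x)-J_{-2it}(x)}{2i}$ is exactly (up to normalization) the standard kernel that, via $-\pi Y_v(x)=e^{-v\pi i/2}K_v(xe^{-\pi i/2})+e^{v\pi i/2}K_v(xe^{\pi i/2})$ and the companion identity for $J_v$, can be written as a single oscillatory Bessel-type function with the same $e^{\pm ix}W(x)$ structure and size $\ll x^{-1/2}$ for $x$ large, uniformly for $t$ real or $t=ir$ with $0<r<1/2$ (the factor $1/\sinh\pi t$ is precisely what removes the exponential blow-up in the $t=ir$ case and the polynomial dependence in the real case). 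Granting that representation — which follows from the same analysis used for Lemma \ref{lem7_1} — the integral in \eqref{5.2} has the identical shape as \eqref{5.1} with $l$ replaced by (a function of) $t$, and the same two-part argument applies verbatim: integration by parts in the oscillatory regime for the $\log X/X$ bound, and, when $t>1$ so that $x\asymp X<t^2$ puts the kernel in its decaying range, the extra savings $(X/t^2)^A$ from repeated integration by parts.

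The main obstacle I anticipate is the \emph{uniformity} in the parameters near the transition ranges — specifically, controlling the non-oscillatory product $W_l\overline{W}_\kappa$ (and its analogue in the $t$-variable) when $l$ is close to $\kappa$ (respectively when $t$ is small), where the clean ``$e^{\pm ix}W_v(x)$ with $W_v\ll x^{-1/2}$'' picture is valid but the cancellation between the two non-oscillatory terms that one might hope for does not occur, leaving exactly the harmless-but-not-better-than $\log X / X$ contribution. One must be careful to get the log power right and to ensure the constants depend only on $\kappa$ (and $\epsilon$), not on $l$ or $t$; this is why the hypothesis $X\geq 10\kappa$ is imposed, guaranteeing $J_\kappa$ is always in its oscillatory regime. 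A secondary technical point is justifying the termwise integration and the interchange with the $u$-integrals hidden inside the $W_v$ representation \eqref{Wk}, but this is routine given the absolute convergence noted after that display.
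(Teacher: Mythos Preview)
Your proposal splits naturally into two parts, and they fare quite differently.

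For the first bound in \eqref{5.1} and \eqref{5.2}, your idea of expanding \emph{both} Bessel factors via $J_v=e^{ix}W_v+e^{-ix}\overline W_v$ is fine when the order $l$ stays bounded (say $l\le 2\kappa$), and there it even yields $\ll X^{-1}$ without the logarithm. But the lemma asserts the first bound for all $l$, and your expansion is not uniform in $l$: the implied constants in \eqref{Wk} depend on the order $v$. The paper avoids this by writing only $J_\kappa$ in the $W$-form and using, for the factor of varying order, the integral representation $J_l(x)=\frac{1}{2\pi}\int_0^{2\pi}e^{i(x\sin u-lu)}\,du$ (and, for \eqref{5.2}, the companion formula $\frac{J_{2it}-J_{-2it}}{\sinh\pi t}=\frac{4i}{\pi}\int_0^\infty\cos(x\cosh u)\cos(2tu)\,du$). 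One then integrates by parts once in $x$, gaining $(Z+1)X^{-1}|\sin u\pm 1|^{-1}$, and integrates over $u$; the singularity at $\sin u=\mp 1$ (resp.\ $\cosh u=1$) produces exactly the $\log X$. Your route is arguably cleaner where it applies; the paper's is uniform in $l$ and $t$.

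For the second bound there is a real gap. You write that for $x\asymp X<l^2$ the function $J_l$ is in its ``pre-transitional'' decaying range, but the transition for $J_l(x)$ is at $x\sim l$, not $x\sim l^2$. In the decisive regime $\sqrt{X}\lesssim l\lesssim X$ (precisely where the bound $((Z+1)X/l^2)^A$ is nontrivial) one has $x>l$, so $J_l(x)$ is oscillatory of size $\asymp x^{-1/2}$ and no pointwise decay is available; the vague appeal to ``integration by parts to extract $(X/l^2)^A$'' has no phase to integrate against that would produce this factor. The paper's mechanism is entirely different: it uses the Bessel differential equation $x^2y''+xy'+(x^2-\alpha^2)y=0$ to derive an identity of the shape
\[
\int\varphi(x)J_\kappa(x)J_l(x)\,dx=\frac{1}{\kappa^2-l^2}\int\Bigl(D_{11}\varphi\cdot J_\kappa'+D_{12}\varphi\cdot J_\kappa\Bigr)J_l\,dx,
\]
with explicit second-order operators $D_{ij}$ that inflate the amplitude by at most $(Z+1)X$. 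Since $l>2\kappa$ forces $|\kappa^2-l^2|\asymp l^2$, each application gains a factor $(Z+1)X/l^2$; iterating $A$ times (together with the companion identity for $\int\varphi J_\kappa' J_l$) gives the stated bound, and the same works with $l$ replaced by $2it$. This differential-equation trick is the missing idea in your proposal.
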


\begin{proof}
The first case in \eqref{5.1} comes from the integral representation of Bessel function
$$J_l(x) = \frac{1}{2\pi}\int_0^{2\pi}e^{i(x\sin u - lu)}du,$$
$$J_k(x) = e^{ix}W_k(x) + e^{-ix}\overline{W_k(x)}.$$

By integration by parts and \eqref{Wk}, we obtain
\begin{align}
\int_0^{\infty}e^{i x\sin u}\frac{H(x)}{x}J_k(x)dx & \ll \left|\frac{i}{\sin u \pm 1}\int_0^{\infty}e^{i x(\sin u\pm 1) }\(\frac{H(x)W_k^{\pm}(x)}{x}\)\'dx\right|\\
& \ll (Z+1)X^{-3/2}|\sin u \pm 1|^{-1},
\end{align}
where $W_k^{+} := W_k$ and $W_k^{-}:=\overline{W_k}$.
Otherwise, we have the trivial bound 
$$\int_0^{\infty}e^{i x\sin u}\frac{H(x)}{x}J_k(x)dx \ll X^{-1/2}.$$

Thus,
\begin{align*}
\int_{0}^{\infty}H(x)J_k(x)J_{l}(x)dx & = \frac{1}{2\pi}\int_0^{2\pi}e^{-ilu}\int_0^{\infty}e^{ix\sin u}\frac{H(x)}{x}J_k(x)dxdu\\
& \ll \int_{0}^{2\pi}X^{-1/2}\min\{1, (Z+1)X^{-1}|\sin u \pm 1|^{-1}\}du \ll \frac{(Z+1)\log |X|}{X}.
\end{align*}

The proof of the first case in \eqref{5.2} follows the same lines. We use the integral representation
$$\frac{J_{2it}(x)-J_{-2it}(x)}{\sinh\pi t} = \frac{4i}{\pi}\int_0^{\infty}\cos(x\cosh u)\cos (2tu)du.$$
Then, when $t$ is real,
\begin{align*}
& \frac{\pi}{\sinh \pi t}\int_{0}^{\infty}\frac{J_{2it}(x)-J_{-2it}(x)}{2i}H(x)J_k( x)\frac{dx}{x}\\
& =  2\int_0^{\infty}\cos (2tu)\int_0^{\infty}\cos(x\cosh u)J_k(x)\frac{H(x)}{x}dxdu\\
& \ll \int_0^{\infty}X^{-1/2}\min\{1, (Z+1)X^{-1}|\cosh u \pm 1|^{-1}\}du \ll \frac{(Z+1)\log|X|}{X}.
\end{align*}
When $t=ir$, since $0<r < 1/2$, the second last inequality becomes
\begin{align*}
&2\int_0^{\infty}\cos (2iru)\int_0^{\infty}\cos(x\cosh u)J_k(x)\frac{H(x)}{x}dxdu\\
 \ll &\int_0^{\infty}X^{-1/2}e^{2ru}\min\{1, (Z+1)X^{-1}|\cosh u \pm 1|^{-1}\}du \ll \frac{(Z+1)\log|X|}{X}.
\end{align*}

For the second case in \eqref{5.2}, we need the differential equation of J-Bessel function $J_{\alpha}(x)$ such that
$$x^2\frac{d^2y}{dx^2}+x\frac{dy}{dx}+(x^2-\alpha^2)y =0.$$

Like (2.15) in \cite{BHM}, one can check that for any compactly supported function $f(x)$ in $C^{\infty}\((0,\infty)\)$, we have
\begin{align*}
\int_0^{\infty}f(x)J_{\alpha}(x)dx = \int_0^{\infty}\(-\(\frac{x^2f(x)}{x^2-\alpha^2}\)^{\prime\prime}+ \(\frac{xf(x)}{x^2-\alpha^2}\)\'\)J_{\alpha}(x)dx.
\end{align*}

Let $\varphi(x)$ be a compact supported smooth function. Applying the formula above to $(x^2-l^2)\varphi(x)J_k(x)$ and the differential equation of $J_k(x)$, we obtain
\begin{align*}
\int_{0}^{\infty}\varphi(x)J_k(x)J_l(x)dx = & \frac{1}{k^2-l^2}\int_{0}^{\infty}D_{11}\(\varphi\)J_k(x)\'J_l(x)dx  +\frac{1}{k^2-l^2}\int_{0}^{\infty}D_{12}\(\varphi\)J_k(x)J_l(x)dx,
\end{align*}
where 
$$D_{11}\(\varphi\) : = 2\left[x\varphi-\(x^2\varphi\)\'\right], \, \ D_{12}\(\varphi\) : = \(x\varphi\)\'-\(x^2\varphi\)^{\prime\prime}.$$

By a similar argument, we have
\begin{align}
\int_{0}^{\infty}\varphi(x)J_k(x)\'J_l(x)dx = & \frac{1}{k^2-l^2}\int_{0}^{\infty}D_{21}\(\varphi\)J_k(x)\'J_l(x)dx + \frac{1}{k^2-l^2}\int_{0}^{\infty}D_{22}\(\varphi\)J_k(x)J_l(x)dx,
\end{align}
where
$$D_{21}\(\varphi\) : = -(x\varphi\'+x^2\varphi\pprime),\, \ D_{22}\(\varphi\): = 2(x\varphi+(x^2-k^2)\varphi\').$$

Finally, let $\varphi = H$, which has type $(1: Z)$ (see section \ref{weight_functions} for the definition) and the support of size $X \geqslant 10k$. Therefore, $D_{ij}\(\varphi\)$ has type $(X(Z+1): Z+1)$ by the lemmas in section \ref{weight_functions}. By repeating using the above two formulae $A$ times (with $\varphi = D_{ij}$), we obtain the bound for the second case in (\ref{5.1}). (\ref{5.2}) follows the same lines.\\
\end{proof}


\subsection{Voronoi Formula, Trace Formulae and Large Sieve Inequalities}\label{se1.4}

We recall the Voronoi summation formulae [\cite{KMV} Appendix A3 and A.4]. Let $q>0$ be an integer, and write $N_2 : = N/(N,q)$. For a cusp form $f$ and $y>0$ define $\mathcal{J}^{\pm} = \mathcal{J}^{\pm}_f$ as following
\begin{equation}\label{Voronoi_weight_h}
\mathcal{J}^{+}(y) = 2\pi J_{k-1}(4\pi y), \, \ \mathcal{J}^{-}(y)=0
\end{equation}
if $f$ is holomorphic of weight $k$, and
\begin{equation}\label{Voronoi_weight_h}
\mathcal{J}^{+}(y) = \frac{\pi}{\cosh(\pi t)}\(Y_{2it}(4\pi y)+Y_{-2it}(4\pi y)\), \, \ \mathcal{J}^{-}(y)=4\cosh(\pi t)K_{2it}(4\pi y)
\end{equation}
if $f$ is Maa{\ss} and $t=t_f$ is the archimedean parameter defined before. Our notations are as the same as the ones in \cite{BH} Section 3.

\begin{rem}
The Bessel function $\mathcal{J}$ in \cite{BH} equals the one defined in \cite{KMV}. Recall the equality
$$\frac{\pi}{2\sin(-v/4)}\(J_v(x)-J_{-v}(x)\) = \frac{-\pi}{2\cos(v/4)}\(Y_v(x)+Y_{-v}(x)\) = K_v(-ix)+K_{v}(ix).$$
Our $t$ is as the same as the $t$ in \cite{BH}, which is the $r$ in \cite{KMV}. The reason we use $Y$-Bessel instead of $J$-Bessel is that, even when $t$=0, our $\mathcal{J}$ is still well defined. 
\end{rem}

\begin{rem}
\label{General_J_Bessel}
We list the bounds for $\mathcal{J}^{\pm}_f(y)$ for a fixed cusp form $f$. When $f$ is holomorphic $\mathcal{J}^{+}_f(y) \ll y^{k-1}$ when $y \rightarrow 0^+$ and $\mathcal{J}^{+}_f(y) \ll y^{-1/2}$ when $y \rightarrow \infty$. When $f$ is Maa{\ss}, let $r_f = -i\Im t_f \geqslant 0$. Then $\mathcal{J}^{+}_f(y) \ll y^{-2r_f}$ when $y \rightarrow 0^+$ and $\mathcal{J}^{+}_f(y) \ll y^{-1/2}$ when $y \rightarrow \infty$. If $r_f =0$, $\mathcal{J}^{-}_f(y) \ll y^{-\epsilon}$ when $y \rightarrow 0^+$. Else, if $r_f >0$, $\mathcal{J}^{-}_f(y) \ll y^{-2r_f}$ when $y \rightarrow 0^+$. We always have that $\mathcal{J}^{-}_f(y) \ll e^{-y}$ when $y \rightarrow \infty$.
\end{rem}

\begin{lem}\label{lemV}
(\textbf{Voronoi Summation Formula }) Let $(a,q) =1$ and let $h$ be a smooth function, compactly supported on $(0,\infty)$. Let $f$ be a newform of a square-free level $N$ . Set $N_2 := N/(N,q)$. Then there exists a complex number $\eta^{\pm}$ both with norm $1$ and a newform $f^{*}$ of the same level $N$  such that
$$\sum_{n}\frac{\lambda_f(n)}{\sqrt{n}}e\(n\frac{a}{q}\)h(n) = 2\sum_{\pm}\eta^{\pm} \sum_{n}\frac{\lambda_{f^*}(n)}{\sqrt{n}}e\(\mp n\frac{\overline{aN_2}}{q}\)\int_0^{\infty}h\(\frac{\xi^2q^2N_2}{n}\)\mathcal{J}^{\pm}(\xi)d\xi$$ 
where $\bar{x}$ denotes the multiplicative inverse of $x$.
\end{lem}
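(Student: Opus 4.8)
The plan is to obtain this as the classical $\mathrm{GL}_2$ Voronoi summation formula, in the shape proved in \cite{KMV} (Appendix A.3--A.4), with the square-freeness of $N$ used to make the Fricke involution transparent: $W_N$ factors as a product of commuting Atkin--Lehner involutions $W_p$, $p \mid N$, each of which sends the newform $f$ to a modulus-one multiple of a newvector, so that $f^\ast := f\mid W_N$ is again a newform of level $N$. Throughout we use that, since $f$ is a newform normalized by $\psi_f(1)=1$, its Fourier coefficients equal the Hecke eigenvalues, so that $\sum_n \frac{\lambda_f(n)}{\sqrt n} e(na/q) h(n) = \sum_n \frac{\psi_f(n)}{\sqrt n} e(na/q) h(n)$ is directly a weighted evaluation of $f$ near the cusp $a/q$.

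First I would pass to the additively twisted $L$-function. Writing $h$ by Mellin inversion, $h(n) = \frac{1}{2\pi i}\int_{(\sigma)} \widetilde h(s)\, n^{-s}\, ds$ with $\widetilde h$ of rapid decay on vertical lines (as $h$ is smooth and compactly supported in $(0,\infty)$), the left-hand side becomes $\frac{1}{2\pi i}\int_{(\sigma)} \widetilde h(s)\, L_f(\tfrac12 + s, a/q)\, ds$, where $L_f(s,a/q) := \sum_{n\ge 1} \lambda_f(n) e(na/q) n^{-s}$ converges for $\mathrm{Re}(s)$ large and extends to an entire function since $f$ is cuspidal.

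Second --- and this is the heart of the matter --- I would establish the functional equation of $L_f(s,a/q)$. One realizes it, up to the archimedean factor $\widetilde{W_f}$, as the Mellin transform $\int_0^\infty f\!\(\frac aq + \frac{iy}{q}\) y^{s'}\, \frac{dy}{y}$ along the vertical geodesic through the cusp $a/q$. Choosing a matrix $\begin{pmatrix} a & b \\ q & d \end{pmatrix}$ (possible since $(a,q)=1$) and writing $N = N_1 N_2$ with $N_1 = (N,q)$, $N_2 = N/N_1$, one checks that this matrix composed with the Fricke involution $W_N$ and a suitable integer translation lies in $\Gamma_0(N)$, identifying the cusp $a/q$ with $-\overline{aN_2}/q$; the automorphy of $f$ then gives an exact identity expressing $f$ near $a/q$ in terms of $f^\ast$ near $-\overline{aN_2}/q$, with a modulus-one constant and the correct scaling. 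Taking Mellin transforms of this identity produces
$$ L_f(\tfrac12+s, a/q) = \(\text{a constant of modulus one}\)\,\(q^2 N_2\)^{-s}\sum_{\pm} \(\text{gamma-factor ratio}\)^{\pm} L_{f^\ast}\!\(\tfrac12 - s,\ \mp\,\overline{aN_2}/q\), $$
where the two gamma-factor ratios are precisely the Mellin transforms of the kernels $\mathcal J^{\pm}$ of \eqref{Voronoi_weight_h}: in the holomorphic case $\mathcal J^+ = 2\pi J_{k-1}(4\pi\,\cdot)$ and $\mathcal J^- \equiv 0$, while in the Maa{\ss} case one gets the $Y$-Bessel combination and the $K$-Bessel term, reflecting the two arcs of the $\mathrm{GL}_2$ archimedean functional equation. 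For square-free $N$ the modulus-one constant and the exact shape of these ratios are read off by writing $W_N = \prod_{p\mid N} W_p$ and using the local relations behind \eqref{saving_norm}.

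Finally, I would insert this functional equation into the contour integral, shift $\mathrm{Re}(s)$ from $\sigma$ to $-\sigma$ (legitimate because $\widetilde h$ decays rapidly while $L_{f^\ast}$ has only polynomial growth in vertical strips, the integrand being entire), and re-expand $L_{f^\ast}(\tfrac12 - s, \mp\overline{aN_2}/q)$ as a Dirichlet series in $n$. Interchanging sum and integral, the inner $s$-integral is $\frac{1}{2\pi i}\int \widetilde h(s)\, (\text{gamma-factor ratio})^{\pm} (n/(q^2 N_2))^{s}\, ds$, which by Mellin inversion equals $\int_0^\infty h\!\(\frac{\xi^2 q^2 N_2}{n}\) \mathcal J^{\pm}(\xi)\, d\xi$ after the change of variable matching the Bessel kernel; collecting the terms (the overall factor $2$ coming from the normalization of that kernel) yields the stated identity with $\eta^{\pm}$ of modulus one. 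The main obstacle is the bookkeeping in the second step: producing exactly $\overline{aN_2}$ in the dual additive character, exactly $q^2 N_2$ inside $h$, and a dual form $f^\ast$ of the \emph{same} level $N$ with a unit constant --- all of which hinge on the square-freeness of $N$ via the commuting Atkin--Lehner involutions; the remaining analytic points (Mellin-transform bounds, contour shifts, absence of poles) are routine given cuspidality and the compact support of $h$.
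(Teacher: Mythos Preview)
Your proposal is correct and follows the standard approach; note, however, that the paper does not actually prove this lemma but simply quotes it from \cite{KMV}, Appendix A.3--A.4, where precisely the argument you outline (functional equation of the additively twisted $L$-function via the Atkin--Lehner involution $W_{N_2}$, then Mellin inversion to recover the Bessel kernels $\mathcal{J}^{\pm}$) is carried out. The paper only adds the remark that $\eta^{\pm}$ depends just on $N_2$ and $f$, being $i^k\eta(N_2)$ in the holomorphic case and $\eta(N_2)$, $\epsilon_f\eta(N_2)$ in the Maa{\ss} case.
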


The explicit expression of $\eta^{\pm}$ is obtained in \cite{KMV}. When $f$ is holomorphic, $\eta^{\pm} = i^k\eta(N_2)$, where $\eta(N_2)$ is the pseudo-eigenvalue of the Atkin-Lehner operator $W_{N_2}$. When $f$ is Maa{\ss}, $\eta^{+} = \eta(N_2)$, $\eta^{-} = \epsilon_f\eta(N_2)$ where $\epsilon_f$ is the eigenvalue of $f$ under the reflection operator. Therefore, in any case, $\eta^{\pm}$ only depends on $N_2$ and $f$.


For any $m,n,c \in\mathbb{N}$, let $S(m,n; c)$ denote the Kloosterman sum
$$S(m,n;c) = \sideset{}{^*}\sum_{a(c)} e\(\frac{n\alpha+m\bar{a}}{c}\).$$

Sum of Kloosterman sums appear in trace formula. First we have the following.
\begin{lem}\textbf{(Petersson Trace Formula)}\label{lemPT}
Let $N\geqslant 1$ be an integer. Let $\mathcal{B}_{k}(M)$ be any Hecke eigenbasis for $\mathcal{S}_k(M)$. For any $n,m \geqslant 1$, we have
$$\sum_{f\in \mathcal{B}_k(M)}\omega_f^{-1}\psi_f(n)\overline{\psi_f}(m) = \delta(n,m) + 2\pi i^{-k}\sum_{\substack{c>0\\ c\equiv 0(M)}}\frac{1}{c}S(n,m;c)J_{k-1}\(\frac{4\pi\sqrt{nm}}{c}\)$$
where the spectral weights $\omega_f$ are given by
$$\omega_f := \frac{(4\pi)^{k-1}}{\Gamma(k-1)}\braket{f,f}$$
and $\delta(n,m) =1$ if $n=m$ and $\delta(n,m) = 0$ otherwise.\\
\end{lem}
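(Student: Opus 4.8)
The plan is to run the classical Poincar\'e series derivation of the Petersson formula, which for a fixed level $M$ reduces everything to computing the Fourier expansion of one explicit cusp form in two ways. For $m\geqslant 1$ I would introduce the $m$-th holomorphic Poincar\'e series of weight $k$ and level $M$,
$$P_m(z):=\sum_{\gamma\in\Gamma_\infty\backslash\Gamma_0(M)}(cz+d)^{-k}e(m\gamma z),\qquad\gamma=\begin{pmatrix}a&b\\c&d\end{pmatrix}.$$
For $k>2$ this converges absolutely and locally uniformly, and $y^{k/2}P_m$ lies in $\mathcal{S}_k(M)$ in the normalization of Section \ref{se1.1} (for $k=2$ one inserts a convergence factor and analytically continues, Hecke's trick). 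Writing $F(z)=y^{-k/2}f(z)=\sum_n a_F(n)e(nz)$ for $f\in\mathcal{S}_k(M)$, so that $a_F(n)=\psi_f(n)n^{(k-1)/2}$, the strategy is to compare the spectral expansion of $P_m$ along $\mathcal{B}_k(M)$ with its geometric Fourier expansion.

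First I would do the unfolding computation: unfolding $P_m$ against $f$ and integrating over $x\in[0,1]$ isolates the $m$-th coefficient, giving
$$\braket{y^{k/2}P_m,f}=\overline{a_F(m)}\int_0^\infty y^{k-2}e^{-4\pi my}\,dy=\frac{\Gamma(k-1)}{(4\pi)^{k-1}}\,m^{-(k-1)/2}\,\overline{\psi_f(m)}.$$
Dividing by $\braket{f,f}$, expanding $y^{k/2}P_m=\sum_{f\in\mathcal{B}_k(M)}\braket{y^{k/2}P_m,f}\braket{f,f}^{-1}f$, reading off the $n$-th Fourier coefficient $p_m(n)$ of $P_m$, and using $\braket{f,f}^{-1}=\frac{(4\pi)^{k-1}}{\Gamma(k-1)}\omega_f^{-1}$, the constants cancel and one obtains
$$p_m(n)=\left(\tfrac{n}{m}\right)^{(k-1)/2}\sum_{f\in\mathcal{B}_k(M)}\omega_f^{-1}\psi_f(n)\overline{\psi_f(m)}.$$

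Separately I would compute $p_m(n)$ directly from the definition of $P_m$. Decomposing the cosets in $\Gamma_\infty\backslash\Gamma_0(M)$ by the lower-left entry $c$: the coset $c=0$ contributes $\delta(m,n)$, while for each $c>0$ with $M\mid c$, summing over the relevant residues modulo $c$ and applying Poisson summation in the remaining integer variable produces $S(m,n;c)/c$ times the standard Bessel integral $\int_{-\infty}^{\infty}(x+iy)^{-k}e\!\left(-mx-\tfrac{n}{c^2(x+iy)}\right)dx$, which evaluates to a constant multiple of $i^{-k}(n/m)^{(k-1)/2}J_{k-1}(4\pi\sqrt{mn}/c)$. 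This yields
$$p_m(n)=\delta(m,n)+2\pi i^{-k}\left(\tfrac{n}{m}\right)^{(k-1)/2}\sum_{\substack{c>0\\ M\mid c}}\frac{S(m,n;c)}{c}J_{k-1}\!\left(\frac{4\pi\sqrt{mn}}{c}\right).$$
Equating the two formulas for $p_m(n)$, cancelling the factor $(n/m)^{(k-1)/2}$ (harmless on the $\delta$-term since it is supported on $m=n$), and using $S(m,n;c)=S(n,m;c)$ to interchange $m$ and $n$ gives exactly the stated identity.

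I expect the main obstacle to be bookkeeping rather than anything conceptual: tracking the normalization constants consistently through the passage between the classical coefficients $a_F(n)$ and the paper's $\psi_f(n)$, pinning down the factor $2\pi i^{-k}$, and correctly evaluating the Bessel integral. The one genuine analytic point is the termwise interchange of sum and integral in the Fourier expansion of $P_m$, which is immediate from absolute convergence when $k>2$ and requires Hecke's trick when $k=2$. All of this is classical; see \cite{IK} Theorem 3.6 or \cite{KMV}.
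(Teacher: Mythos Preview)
The paper does not actually prove this lemma; it is stated as a classical input in Section \ref{se1.4} without proof. Your proposal is the standard Poincar\'e series derivation (essentially the proof in \cite{IK}, Chapter 14/Theorem 3.6 and in \cite{ILS}), and it is correct. The only point worth flagging is that in the paper's conventions $f\in\mathcal{S}_k(M)$ already carries the factor $y^{k/2}$, so the inner product $\braket{f,f}$ is computed with the weight $y^{-2}\,dx\,dy$ and no extra $y^k$; your unfolding computation is consistent with this once you write $f=y^{k/2}F$ as you did.
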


Next, we state the Kuznietsov Trace formula and the bound of its weight functions. See \cite{DI} Theorem 1. For the definition of $S_{\mathfrak{ab}}(m, n ; \gamma)$, see \cite{DI} (1.6).

\begin{lem}\textbf{(Kuznietsov Trace Formula)}\label{lemKT}
Let $m,n$ be two positive integers and $\varphi$ a $C^{3}$-class function with compact support on $(0,\infty)$; let $\fa$ and $\fb$ be two cusps of $\Gamma = \Gamma_0(q)$; denoting by $\sum^{\Gamma}$ a summation performed over the positive real numbers $\gamma$ for which $S_{\mathfrak{ab}}(m, n ; \gamma)$ is defined, one has
\begin{align*}
\sum_{\gamma}^{\Gamma} \frac{1}{\gamma} S_{\mathfrak{ab}}(m, n ; \gamma) \varphi(\frac{4\pi \sqrt{mn}}{\gamma}) & = \frac{1}{2\pi}\sum_{k = 0(2)}\sum_{j}\frac{i^k(k-1)!}{(4\pi)^{k-1}}\overline{\psi}_{jk}(\mathfrak{a},m)\psi_{jk}(\mathfrak{b},n)\tilde{\varphi}(k-1)\\
& + \sum_{j \geqslant 1} \frac{\overline{\rho}_{j\mathfrak{a}}(m)\rho_{j\mathfrak{b}}(n)}{\cosh (\pi t_j)}\hat{\varphi}(t_j)\\
& + \frac{1}{\pi}\sum_{j}\int_{-\infty}^{\infty}\left(\frac{m}{n}\right)^{-it}\overline{\varphi}_{j\mathfrak{a}}\left(m,\frac{1}{2}+it\right)\varphi_{j\mathfrak{b}}\left(n,\frac{1}{2}+it\right)\hat{\varphi}(t)dt,
\end{align*}
and
\begin{align*}
\sum_{\gamma}^{\Gamma} \frac{1}{\gamma} S_{\mathfrak{ab}}(m, -n ; \gamma) \varphi(\frac{4\pi \sqrt{mn}}{\gamma}) & = \sum_{j \geqslant 1} \frac{\overline{\rho}_{j\mathfrak{a}}(m)\rho_{j\mathfrak{b}}(n)}{\cosh (\pi t_j)}\check{\varphi}(t_j)\\
& + \frac{1}{\pi}\sum_{j}\int_{-\infty}^{\infty}\left(mn\right)^{-it}\overline{\varphi}_{j\mathfrak{a}}\left(m,\frac{1}{2}+it\right)\varphi_{j\mathfrak{b}}\left(n,\frac{1}{2}+it\right)\check{\varphi}(t)dt
\end{align*}
where the Bessel transforms are defined by
$$\tilde{\varphi}(l)=\int_{0}^{\infty}J_l(y)\varphi(y)\frac{dy}{y},$$
$$\hat{\varphi}(t)=\frac{\pi}{\sinh \pi t}\int_{0}^{\infty}\frac{J_{2it}(x)-J_{-2it}(x)}{2i}\varphi(x)\frac{dx}{x},$$
$$\check{\varphi}(t) = \frac{4}{\pi}\cosh \pi t \int_{0}^{\infty}K_{2it}(x)\varphi(x)\frac{dx}{x}.$$
\end{lem}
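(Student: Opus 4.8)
The identity recorded in Lemma~\ref{lemKT} is Theorem~1 of \cite{DI} (going back to Kuznetsov), so in the paper it is simply cited; here I sketch how one would derive it from scratch. The argument is the harmonic-analysis companion of the Petersson formula of Lemma~\ref{lemPT}, but with a genuinely analytic extra step at the end. The plan is to evaluate the Petersson inner product of two Poincar\'e series attached to the cusps $\fa,\fb$ of $\Gamma=\Gamma_0(q)$ in two ways --- ``geometrically'' by unfolding, and ``spectrally'' by the spectral decomposition of $L^2(\Gamma\backslash\mathbb{H})$ --- and then to invert the resulting relation so that an arbitrary admissible test function $\varphi$ is reached.

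Concretely, for a smooth profile $\phi$ on $(0,\infty)$ of sufficient decay and for $m\geqslant 1$, one forms the Poincar\'e series
\[
P_{\fa m}(z\mid\phi):=\sum_{\gamma\in\Gamma_\fa\backslash\Gamma}\phi\bigl(\mathrm{Im}(\sigma_\fa^{-1}\gamma z)\bigr)\,e\bigl(m\,\mathrm{Re}(\sigma_\fa^{-1}\gamma z)\bigr),
\]
where $\sigma_\fa$ is the scaling matrix of $\fa$. First I would compute the Fourier expansion of $P_{\fa m}(\cdot\mid\phi)$ at the cusp $\fb$ by unfolding the $\Gamma$-sum into double cosets: the trivial coset produces a diagonal contribution (nonzero only when $\fa=\fb$ and $m=n$), while for every admissible modulus $\gamma$ the non-trivial cosets produce the generalized Kloosterman sum $S_{\fa\fb}(m,\pm n;\gamma)$ times a Hankel-type integral transform of $\phi$. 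In the arrangement of \cite{DI} the diagonal piece is accounted for within the spectral side, so that choosing $\phi$ with this Hankel transform equal to $\gamma^{-1}\varphi\bigl(4\pi\sqrt{mn}/\gamma\bigr)$ reproduces exactly the left-hand sides of the two displayed identities; whether one lands on the $+n$ or the $-n$ version is dictated by whether the Fourier frequencies $e(mx)$ and $e(\pm nx)$ reinforce or oppose.

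Second I would expand $P_{\fa m}(\cdot\mid\phi)$ along the full spectral decomposition: the constant function, the Maass cusp forms $u_j$ with Fourier data $\rho_{j\fa}(m)$ and spectral parameters $t_j$, the holomorphic cusp forms of each even weight $k$ with coefficients $\psi_{jk}(\fa,m)$, and the Eisenstein series with coefficients $\varphi_{j\fa}(m,\tfrac12+it)$. Pairing $P_{\fa m}$ with each basis vector unfolds to a single archimedean integral of $\phi$ against the matching Whittaker function --- the $W_{0,it_j}$-type function for the Maass and Eisenstein pieces, and $y^{k/2}e^{-2\pi y}$ for the holomorphic piece --- and evaluating these integrals through the classical integral representations of the $J$-, $Y$- and $K$-Bessel functions turns them into precisely the transforms $\tilde\varphi(k-1)$, $\hat\varphi(t_j)$, $\check\varphi(t_j)$ and their continuous-spectrum analogues. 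In the $-n$ identity the sum over holomorphic weights disappears, because a holomorphic form $y^{k/2}F(z)$ has only strictly positive Fourier frequencies and is therefore orthogonal to the negative-frequency Poincar\'e series; this is also why no diagonal term survives in that case, exactly as in the opposite-sign situation.

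Equating the geometric and spectral evaluations yields the two identities, but only for the restricted class of profiles $\phi$ arising above. The genuinely delicate point --- and the main obstacle --- is to remove this restriction and reach an arbitrary $C^{3}$ function $\varphi$ with compact support in $(0,\infty)$: one must show that the map $\phi\mapsto(\text{induced }\varphi)$ is invertible on a suitable space of test functions and, simultaneously, that the spectral series converge absolutely well enough to justify every interchange of summation, integration and analytic continuation (in the auxiliary parameter introduced to make $P_{\fa m}$ converge). This is carried out by a Sears--Titchmarsh-type inversion for the transform $\tilde\varphi$ together with its hyperbolic-Bessel analogues governing $\hat\varphi$ and $\check\varphi$, using Weil's bound for $S_{\fa\fb}(m,\pm n;\gamma)$ and polynomial control of $\rho_{j\fa}(m)$ in $t_j$ to dominate the tails. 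This inversion-and-convergence bookkeeping is precisely where the work of Kuznetsov and of Deshouillers--Iwaniec lies; for the clean statement quoted above I would simply invoke \cite{DI} Theorem~1.
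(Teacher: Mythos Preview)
Your proposal is correct and aligns with the paper's treatment: the paper does not prove Lemma~\ref{lemKT} but simply quotes it as Theorem~1 of \cite{DI}, exactly as you note at the outset. The sketch you supply of the Poincar\'e-series/spectral-decomposition/Sears--Titchmarsh-inversion argument is an accurate summary of the Deshouillers--Iwaniec proof, so there is nothing to add.
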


\begin{rem}\label{SSS}
Let $\Gamma =\Gamma(rs)$, where $r,s$ are coprime integers. In our proof, we only need the fact that $S_{\infty 1/s}(m,n; \gamma) = e\(n\frac{\bar{s}}{r}\)S(m\bar{r},n;sC)$ (see \cite{DI} (1.6)), and the identity (See \cite{DI} section 1)
$$\sum^{\Gamma}_{\gamma} \frac{1}{\gamma}S_{\infty 1/s}(m,n; \gamma)= \sum_{\substack{C>0 \\ (C,r)=1}}\frac{1}{s\sqrt{r}C}e\(n\frac{\bar{s}}{r}\)S(m\bar{r},n;sC).$$
\end{rem}

\begin{rem}\label{coef}
$\rho_{j\mathfrak{a}}(\cdot), \psi_{jk}(\mathfrak{a}, \cdot)$ are the Fourier coefficients of Maass forms and holomorphic cusp forms at cusp $\mathfrak{a}$ respectively. The normalization of $\rho_j, \psi_{jk}$ in this lemma is different from the normalization we have in Sections \ref{se1.1} and \ref{se2}. With the later one, for any $f \in \mathcal{B}^{*}_k(M)$, $\psi_f(1)$ is normalized to be $1$. Thus, $\braket{f,f} = \frac{3\text{Vol}(\Gamma(M)\backslash\mathbb{H}^2)}{2\pi}\text{Res}_{s=1}L(s,f\times f) \gg M^{1-\epsilon}$ (see \cite{HL}). However, in the above lemma, $\psi_{f_j}(1)$ is normalized such that $\braket{f_j,f_j} = 1$. The reader can see \cite{DI} for more details.
\end{rem}

\begin{rem}\label{Eis}
$ \varphi_{j\mathfrak{a}}(\cdot)$ is the Fourier coefficient of Eisenstein series. The sum over $j$ is a finite sum over a "suitable" parametrization of Eisenstein series. In the classical pattern, which is also the case in \cite{DI}, the parametrization is chosen to be the set of the cusps of $\Gamma_0(q)$. In the adelic reformulation of theory of cusp forms, we have another natural basis as a finite set described in \cite{GJ}. See \cite{BHM1} for more details.
\end{rem}

Next, we need to estimate the functions on the spectral side. We quote a part of Lemma 2.1 in \cite{P1} as following.
\begin{lem}\text{(\cite{P1} Lemma 2.1)}\label{lemB}
Let $X > 0$, $Z \geqslant 1$, $k \in \mathbb{N}$. If $\varphi$ is supported on $[X, 2X]$ with derivatives of orders $v = 0,1,2,\dots 2k$ bounded by $\varphi^{(v)} \ll \(\frac{Z}{X}\)^v$ then the following bounds hold. \\
(a) For any real $t$ and any $l \leqslant k$,
\begin{equation*}
\hat{\varphi}(t), \check{\varphi}(t), \tilde{\varphi}(t) \ll \left(\frac{Z^2 + t^2}{X^2}\right)^l\left(\frac{1+|\log (X/Z)|}{1+X/Z}\right).
\end{equation*}
(b) For all real $t$ with $|t| > \max\{2X,1\}$ and for all $j \leqslant 2k$,
\begin{equation*}
\hat{\varphi}(t), \check{\varphi}(t), \tilde{\varphi}(t) \ll \left(\frac{Z}{|t|}\right)^j\left(\frac{1}{|t|^{1/2}}+\frac{X}{|t|}(1+\log |t|)\right)
\end{equation*}
(c) For exceptional eigenvalues $\lambda = 1/4 + (it)^2$ we take $t \in (0, 1/2)$ and
\begin{equation*}
\hat{\varphi}(it), \check{\varphi}(it) \ll
\begin{cases}
\frac{Z}{X} & \text{if}\ X>Z \\
\{1+ \log \frac{Z}{X}\}\left(\log \frac{Z}{X}\right)^{2t} & \text{if} X<Z.\\
\end{cases}
\end{equation*}
\end{lem}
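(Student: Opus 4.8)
The plan is to reduce each of the three Bessel transforms to a classical oscillatory integral and estimate it by integration by parts, exploiting the second-order Bessel differential equation much as in the proof of Lemma~\ref{lem12}. For $\hat\varphi$ one starts from the identity used there, which after rescaling reads
\[
\frac{\pi}{\sinh\pi t}\cdot\frac{J_{2it}(x)-J_{-2it}(x)}{2i}=2\int_0^\infty\cos(2t\xi)\cos(x\cosh\xi)\,d\xi,
\]
regular at $t=0$, unlike the power series in $J_{\pm2it}$; for $\check\varphi$, the companion identity $\frac{4\cosh\pi t}{\pi}K_{2it}(x)=\frac4\pi\int_0^\infty\cos(2t\xi)\cos(x\sinh\xi)\,d\xi$ together with $K_{2it}(x)\ll e^{-x}$ for the large-$x$ tail; and for $\tilde\varphi$ one may use $J_l(x)=\frac1{2\pi}\int_0^{2\pi}e^{i(x\sin\theta-l\theta)}\,d\theta$ (integral $l$; Mehler--Sonine otherwise) or, more directly, work with $\int_0^\infty J_t(x)\varphi(x)\tfrac{dx}{x}$ and Bessel's equation $x^2y''+xy'+(x^2-t^2)y=0$. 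In each case, after exchanging orders of integration, the inner radial integral has the form $\int_0^\infty\omega\bigl(x\rho(\xi)\bigr)\tfrac{\varphi(x)}{x}\,dx$ with $\omega$ trigonometric and $\rho(\xi)$ a frequency $\ge1$ past the turning point, equal to $\cosh\xi$, $\sinh\xi$, $|\sin\theta|$, or the intrinsic $\asymp1$ oscillation of $J_t(x)$.

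For part~(a): since $\varphi$ is supported on $[X,2X]$ with $\varphi^{(v)}\ll(Z/X)^v$ and $Z\ge1$, integrating the inner integral by parts $v$ times against the oscillation of frequency $\rho(\xi)$ gives $\ll\min\{1,(Z/(X\rho(\xi)))^v\}$ for any $v\le2k$; summing over the auxiliary variable and balancing the trivial against the oscillatory bound at the crossover $X\rho(\xi)\asymp Z$ produces the factor $\frac{1+|\log(X/Z)|}{1+X/Z}$, the logarithm being precisely the contribution of the non-oscillatory region. The factor $\bigl(\frac{Z^2+t^2}{X^2}\bigr)^l$, any $l\le k$, is then obtained by a further $l$-fold integration by parts using Bessel's equation: each step contributes $\asymp\frac{x^2}{x^2+t^2}\bigl(\tfrac ZX\bigr)^2\ll\frac{Z^2+t^2}{X^2}$ and is lossless away from the turning/stationary region. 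Part~(b) is the regime $|t|>\max\{2X,1\}$, where the turning and stationary points lie outside the support of $\varphi$, so the Bessel-equation integration by parts is lossless throughout: $j\le2k$ steps give the gain $(Z/|t|)^j$ times a residual integral bounded trivially by $|t|^{-1/2}+\frac{X}{|t|}(1+\log|t|)$. For part~(c), with $0<t<1/2$, one substitutes the small-argument expansions of $J_{\pm2t}$ and $K_{2t}$ --- legitimate because $x^{-2t}$ is then locally integrable and the auxiliary integrals still converge --- and the two cases $X>Z$ and $X<Z$ distinguish the essentially non-oscillatory behaviour of $\varphi$ (giving $Z/X$) from the oscillatory one (giving the logarithmic bound, most cleanly via the Mellin transform of $\varphi$ and a contour shift); throughout, the implied constants depend continuously on $t$, hence on $\lambda$.

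The main obstacle is the careful treatment of the transition regions where the integrations by parts lose their force: neighbourhoods of the stationary point $x\rho'(\xi)=\pm2t$ and of the turning point $x\asymp|t|$, and --- for $\tilde\varphi$ and $\check\varphi$ --- of the zeros of the frequency ($\theta\in\{0,\pi\}$, resp. $\xi=0$). All the logarithmic factors, and the precise form of (c), emerge from delimiting there the boundary between ``exploit oscillation'' and ``use the trivial bound'', together with an application of stationary phase wherever a genuine stationary point is present. A secondary technical point is uniformity as $t\to0$, where $\pi/\sinh\pi t$ is only a removable singularity; this is why one argues via the integral representations above, or via Bessel's equation directly, rather than via the series in $J_{\pm2it}$.
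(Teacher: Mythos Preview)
The paper does not prove this lemma at all: it is simply quoted verbatim from \cite{P1} (Pitt, Lemma~2.1), as the sentence immediately preceding the statement makes explicit. There is therefore no ``paper's own proof'' to compare your proposal against.

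Your sketch is a reasonable outline of how one would actually establish such bounds, and is close in spirit to Pitt's original argument (and to the related calculations in Lemma~\ref{lem12} of the present paper). One small slip: the integral representation you write for $K_{2it}$ is not correct --- the standard one is $K_{2it}(x)=\int_0^\infty e^{-x\cosh\xi}\cos(2t\xi)\,d\xi$, with exponential decay in $x$ rather than an oscillatory $\cos(x\sinh\xi)$. This does not affect the overall strategy (integration by parts against the decaying or oscillating kernel, with care near stationary/turning points), but the details for $\check\varphi$ would need to be adjusted accordingly.
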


For those coefficients appeared in the Kuznietsov Trace formula, one has the following large sieve type inequality.
\begin{lem}\text{(\cite{DI} Theorem 2)}
\label{lem13}
Let $T,K \geqslant 1$. Set $\Gamma = \Gamma(q)$. Let $\mathfrak{a} = u/w$ be a cusp of $\Gamma$ where $w|q$ and $(u,w)=1$. Let $\rho, \mu, \varphi$ be defined as in Lemma \ref{lemKT}. Then, for any sequence of complex numbers $\{a_k\}$ ,
\begin{align*}
& \sum_{|t_j| \leqslant T}\frac{1}{\cosh (\pi t_j)}\left|\sum_{k \sim K} a_k \rho_{j\mathfrak{a}}(k)\right|^2 \\
& \sum_{k = 0(2), k \leqslant T}\sum_{j}\frac{i^k(k-1)!}{(4\pi)^{k-1}}\left| \sum_{k \sim K} a_k \psi_{jk}(\mathfrak{a},k)\right|^2\\
& \sum_{j}\int_{-T}^{T}\left| \sum_{k \sim K} a_k k^{it} \varphi_{ja}\left(m,\frac{1}{2}+it\right)\right|^2dt
\end{align*}
are all bounded by $O_{\epsilon}\left(\left(T^2 + \mu(\mathfrak{a})K^{1+\epsilon}\right)||a_k||\right)$, where $\mu(\mathfrak{a}) = (w , q/w) q^{-1}$.\\
\end{lem}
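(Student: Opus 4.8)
The plan is to run the classical spectral large sieve argument of Deshouillers and Iwaniec: open the square, feed the resulting bilinear form into the Kuznietsov formula of Lemma~\ref{lemKT}, and so trade the spectral average for an average of Kloosterman sums which is then estimated by Weil's bound. It suffices to treat the first (Maass) sum in detail; the holomorphic and Eisenstein sums are handled by exactly the same scheme, with the Bessel transform $\hat\varphi$ replaced by $\tilde\varphi$ and $\check\varphi$ respectively, so in each case one only needs one test function for which the active transform dominates the indicator of the relevant spectral window while the other two transforms stay non-negative.

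The key input is a test function $\varphi=\varphi_T$, constructed explicitly as in \cite{DI}, such that $\hat\varphi(t)\geq 0$ for all real $t$ and for all $t=ir$ with $0<r<1/2$, with $\hat\varphi(t)\geq 1$ whenever $|t|\leq T$ or $t=ir$ with $0<r<1/2$; such that $\tilde\varphi(\ell)\geq 0$ for every even integer $\ell\geq 2$; and such that $\check\varphi(t)\geq 0$ for all real $t$. Given such a $\varphi$, I would apply Lemma~\ref{lemKT} with $\mathfrak{a}=\mathfrak{b}$, $m=k$, $n=l$, multiply by $a_k\overline{a_l}$, and sum over $k,l\sim K$. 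By the non-negativity of $\tilde\varphi$ and $\check\varphi$ the holomorphic and continuous-spectrum terms on the spectral side may be dropped, and since $\hat\varphi(t)\geq 1$ on the window the spectral side dominates $\sum_{|t_j|\leq T}(\cosh\pi t_j)^{-1}\bigl|\sum_{k\sim K}a_k\rho_{j\mathfrak{a}}(k)\bigr|^2$. This bounds the Maass sum by
\[ \sum_{k\sim K}\sum_{l\sim K} a_k\overline{a_l}\Bigl( c_\varphi\,\mathbf{1}_{k=l} + \sum^{\Gamma}_{\gamma}\frac{1}{\gamma}\,S_{\mathfrak{aa}}(k,l;\gamma)\,\varphi\bigl(\tfrac{4\pi\sqrt{kl}}{\gamma}\bigr)\Bigr), \]
where $c_\varphi$ is the constant coming from the identity term of the trace formula.

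It remains to estimate the two pieces. The diagonal part contributes $c_\varphi\sum_k|a_k|^2$, and the normalization of $\varphi_T$ forces $c_\varphi\ll T^2$ (this is the trivial term, of the size of the normalized Weyl count for $|t_j|\leq T$). For the off-diagonal part I would first rewrite $S_{\mathfrak{aa}}(k,l;\gamma)$ for the cusp $\mathfrak{a}=u/w$ in terms of ordinary Kloosterman sums $S(\cdot,\cdot;\gamma)$ with $\gamma$ ranging over moduli in an arithmetic progression of density exactly $\mu(\mathfrak{a})=(w,q/w)q^{-1}$ — this bookkeeping is where the factor $\mu(\mathfrak{a})$ enters. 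One then inserts Weil's bound $|S(k,l;\gamma)|\leq\tau(\gamma)\gcd(k,l,\gamma)^{1/2}\gamma^{1/2}$, uses the support and size of $\varphi_T$ to confine the $\gamma$-sum, and sums over $k,l\sim K$ by Cauchy--Schwarz together with elementary divisor and gcd estimates; this yields a contribution $\ll\mu(\mathfrak{a})K^{1+\epsilon}\sum_k|a_k|^2$. Adding the two bounds gives the estimate for the Maass sum, and the holomorphic sum (use $\tilde\varphi(\ell)\geq 1$ for $2\leq\ell\leq T$ while keeping $\hat\varphi,\check\varphi\geq 0$) and the Eisenstein sum (use the Eisenstein analogue of $\hat\varphi\geq 1$ while keeping $\tilde\varphi,\check\varphi\geq 0$) follow verbatim.

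I expect the construction of the test function $\varphi_T$ to be the main obstacle: one needs a single $\varphi$ whose three Bessel transforms are simultaneously non-negative on the entire spectrum — real, exceptional, and continuous — with the designated transform bounded below on a window of length $\asymp T$; the positivity at the exceptional parameters $t=ir$, $0<r<1/2$, is the delicate point, and is precisely what Lemma~\ref{lemB}(c) is tailored to handle. A secondary technical point is the arithmetic of the cusps of $\Gamma$: unwinding the scaling matrices to express $S_{\mathfrak{aa}}(k,l;\gamma)$ through classical Kloosterman sums and identifying the density of admissible moduli as $\mu(\mathfrak{a})=(w,q/w)q^{-1}$. Both are carried out in \cite{DI}, so in practice one simply invokes that reference; the outline above is how the bound would be reconstructed.
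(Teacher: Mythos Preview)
The paper does not prove this lemma at all: it is simply quoted as Theorem~2 of \cite{DI}, with no argument given. Your outline is a faithful sketch of exactly that Deshouillers--Iwaniec proof (Kuznietsov with a positive test function, drop the other spectral pieces by positivity, then Weil on the Kloosterman side with the cusp bookkeeping producing the factor $\mu(\mathfrak{a})$), so there is nothing to compare --- you are reconstructing the cited reference, which is precisely what the paper invokes.
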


\begin{lem}\label{lemLS}
Let $\eta$ be a smooth function supported on $[C/2,5C/2]$ such that $\eta^{(j)}\ll_j C^{-j}$ for all $j\geq 0$. For any sequences of complex numbers $x_n, y_m$ we have
\begin{align*}
\sum_{x \leq X}\sum_{y \leq Y}x_ny_m\sum_{\substack{c>0 \\ c \equiv 0(N)}}\frac{\eta(c)}{c}S(n,m;c)J_{k-1}\(\frac{4\pi \sqrt{mn}}{c}\)
\ll_{\epsilon,k}C^{\epsilon} \(\frac{\sqrt{XY}}{C}\)^{k-3/2}\(1+\frac{X}{N}\)^{1/2}\(1+\frac{Y}{N}\)^{1/2}\|x\|_2\|y\|_2
\end{align*}
with any $\epsilon > 0$. Moreover the exponent $k-3/2$ may be replaced by $1/2$.\\
\end{lem}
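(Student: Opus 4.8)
The statement to prove is Lemma \ref{lemLS}, a large sieve bound for sums of Kloosterman sums weighted by a $J$-Bessel function along moduli divisible by $N$.

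The plan is to deduce this from the Petersson trace formula (Lemma \ref{lemPT}). First I would recognize that, up to the factor $\eta(c)/c$, the sum $\sum_{c\equiv 0(N)} \frac{1}{c} S(n,m;c) J_{k-1}(4\pi\sqrt{mn}/c)$ is exactly (a smoothed version of) the geometric side of the Petersson formula for level $N$, applied with a test function that localizes $c$ to size $\asymp C$. So I would pick a smooth function $\varphi$ supported near $4\pi\sqrt{mn}/C$ — more precisely, choose $\varphi$ so that $\frac{2\pi}{\Gamma(k-1)/(4\pi)^{k-1}}$-type normalizations match — such that $2\pi i^{-k}\sum_{c\equiv 0(N)}\frac{1}{c}S(n,m;c)J_{k-1}(4\pi\sqrt{mn}/c) = \sum_{f\in\mathcal{B}_k(N)}\omega_f^{-1}\psi_f(n)\overline{\psi_f(m)} - \delta(n,m)$ after undoing the weight $\eta(c)$. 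The subtlety is that $\eta(c)$ depends on $c$, not just on $\sqrt{mn}/c$, so I would instead fix the shape by writing $\eta(c) = \eta(4\pi\sqrt{mn}/x)$ with $x = 4\pi\sqrt{mn}/c$; for $mn$ in the dyadic ranges this is a legitimate smooth weight with the right derivative bounds (losing only $C^\epsilon$), and for the diagonal/degenerate ranges where $mn$ varies over many dyadic blocks one sums dyadically in $mn$, absorbing the loss into $C^\epsilon$.

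With this in hand, the left-hand side becomes $\sum_{x\leq X}\sum_{y\leq Y} x_n y_m \big(\sum_f \omega_f^{-1}\psi_f(n)\overline{\psi_f(m)} - \delta(n,m)\big)$ up to harmless constants. The diagonal term $\delta(n,m)$ contributes $\sum_n x_n \overline{y_n} \ll \|x\|_2\|y\|_2$ by Cauchy–Schwarz, which is dominated by the claimed bound. For the spectral term I expand: $\sum_f \omega_f^{-1}\big|\sum_n x_n \psi_f(n)\big|\big|\sum_m y_m \psi_f(m)\big|$, apply Cauchy–Schwarz in $f$, and bound each factor $\sum_f \omega_f^{-1}\big|\sum_{n\sim X} x_n \psi_f(n)\big|^2$ by the standard Petersson large sieve $\ll (1 + X/N)^{1+\epsilon}\|x\|_2^2$ (itself a consequence of Lemma \ref{lemPT} plus Weil's bound for Kloosterman sums), and similarly for $y$. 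This gives $\ll (1+X/N)^{1/2}(1+Y/N)^{1/2}\|x\|_2\|y\|_2 C^\epsilon$, which is the second, stronger assertion (exponent $1/2$). To get the first assertion with the factor $(\sqrt{XY}/C)^{k-3/2}$, I would instead, before invoking Petersson, use the decay $J_{k-1}(\xi) \ll \xi^{k-1}$ for small argument: since $c\asymp C$ and $\sqrt{mn}\leq\sqrt{XY}$, we have $4\pi\sqrt{mn}/c \ll \sqrt{XY}/C$, so $J_{k-1}(4\pi\sqrt{mn}/c) \ll (\sqrt{XY}/C)^{k-1}$; pulling this out and estimating the remaining sum of Kloosterman sums trivially (or via a cheap second-moment bound) produces the stated power. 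One then takes whichever of the two bounds is superior in the regime at hand.

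The main obstacle I anticipate is purely technical: transferring the externally imposed cutoff $\eta(c)$ — a function of the modulus — into a function of the Bessel argument $4\pi\sqrt{mn}/c$ of the type required by the Petersson trace formula, uniformly over the dyadic ranges of $n$ and $m$, while keeping the derivative bounds under control and incurring only a $C^\epsilon$ loss. When $\sqrt{mn}$ is comparable to $C$ this is immediate, but when $\sqrt{mn} \ll C$ (so the Bessel argument is small and $\eta$ cannot localize $x$ to a dyadic block) one must run the argument with the archimedean cutoff replaced by a dyadic partition in the $c$-variable directly and sum the $O(\log)$ pieces; here the small-argument bound $J_{k-1}(\xi)\ll \xi^{k-1}$ is exactly what saves the day and in fact produces the $(\sqrt{XY}/C)^{k-3/2}$ gain. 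No genuinely new idea is needed beyond careful bookkeeping, together with the observation (already implicit in Lemma \ref{lemPT} and Weil's bound) that the level-$N$ Petersson large sieve loses only $(1+X/N)^{1+\epsilon}$.
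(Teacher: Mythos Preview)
The paper does not supply a proof of this lemma; it is stated among the preliminaries as a known input. So there is no proof in the paper to compare against. That said, your proposal contains a genuine gap.

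The Petersson trace formula (Lemma \ref{lemPT}) is a \emph{rigid identity} with the fixed weight $J_{k-1}$ on the geometric side; it carries no test-function parameter. You cannot insert the cutoff $\eta(c)$ and still arrive at a holomorphic spectral sum. Your proposed fix---rewriting $\eta(c)=\eta(4\pi\sqrt{mn}/x)$ and calling the product with $J_{k-1}$ a ``test function''---does not help, because Petersson has no slot in which to place such a function. The identity you wrote down,
\[
2\pi i^{-k}\sum_{c\equiv 0(N)}\frac{1}{c}S(n,m;c)J_{k-1}\Big(\frac{4\pi\sqrt{mn}}{c}\Big)=\sum_{f}\omega_f^{-1}\psi_f(n)\overline{\psi_f(m)}-\delta(n,m),
\]
is for the \emph{full} unweighted $c$-sum; there is no positivity or monotonicity that would let you pass from it to the $\eta$-truncated sum, and without the truncation the factor $(\sqrt{XY}/C)^{k-3/2}$ (or $(\sqrt{XY}/C)^{1/2}$) simply cannot arise. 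Your second argument, pulling out $J_{k-1}(\xi)\ll\xi^{k-1}$ and then estimating the residual Kloosterman sum ``trivially'', also falls short: Weil plus Cauchy--Schwarz gives a bound of shape $(\sqrt{XY}/C)^{k-1}C^{1/2}N^{-1}(XY)^{1/2}\|x\|_2\|y\|_2$, which for $X,Y\gg N$ is worse than the target by a factor $(XY)^{1/4}$.

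The trace formula that \emph{does} carry a test function is Kuznetsov (Lemma \ref{lemKT}), and this is the correct tool. After a dyadic decomposition in $mn$, set $\varphi(x)=J_{k-1}(x)\,\eta(4\pi\sqrt{mn}/x)$, supported on $x\asymp\sqrt{XY}/C$. The Bessel transforms $\tilde\varphi,\hat\varphi,\check\varphi$ are controlled by Lemma \ref{lemB} (or, for the product with $J_{k-1}$, by Lemma \ref{lem12}); after Cauchy--Schwarz on the spectral side one invokes the spectral large sieve (Lemma \ref{lem13}) at the cusp $\infty$ with $\mu(\infty)=1/N$, which is exactly what produces the factors $(1+X/N)^{1/2}$ and $(1+Y/N)^{1/2}$. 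This is precisely the template the paper itself runs in the ``$D$ small'' step of Section \ref{se2}, and it delivers both claimed exponents depending on whether one exploits $J_{k-1}(x)\ll x^{k-1}$ in the support of $\varphi$ or only $J_{k-1}(x)\ll x^{-1/2}$.
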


\subsection{Jutila's Circle Method}\label{se1.5}
For any collection of integers $\mathfrak{Q} \subseteq [1,Q]$, and a positive real number $\delta$ such that $Q^{-2} \ll \delta \ll Q^{-1}$, we define the function

$$\tilde{I}_{\mathfrak{Q},\delta}(x) = \frac{1}{2\delta \Lambda} \sum_{q\in \mathfrak{Q}}\sideset{}{^*}\sum_{a \textrm{ mod } q} I_{[\frac{a}{q}-\delta, \frac{a}{q}+\delta]}(x),$$
where $\Lambda = \sum_{q \in \mathfrak{Q}} \varphi(q)$ and $I_{[a,b]}(x)$ is the characteristic function of interval $[a,b]$. 
Moreover, it is an approximation of $I_{[0,1]}$ in the following sense (for a simple proof, see Lemma 4 in \cite{M2}):

\begin{lem}
\label{thm2}
We have
$$\int^{\infty}_{-\infty}\left|I_{[0,1]}(x)-\tilde{I}_{\mathfrak{Q},\delta}(x)\right|^2dx \ll \frac{Q^{2+\epsilon}}{\delta \Lambda^2}.$$\\
\end{lem}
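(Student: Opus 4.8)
The plan is to unfold the definition of $\tilde{I}_{\mathfrak{Q},\delta}$ and $I_{[0,1]}$ via the Plancherel/Parseval identity on $\mathbb{R}$, comparing their Fourier transforms. First I would compute $\widehat{I_{[0,1]}}(\xi) = \int_0^1 e(-x\xi)\,dx$, which has the standard shape $\ll \min(1, |\xi|^{-1})$. Next I would compute the Fourier transform of $\tilde{I}_{\mathfrak{Q},\delta}$: since it is the normalized average over $q \in \mathfrak{Q}$ and $a \bmod q$ (with $(a,q)=1$) of the characteristic functions of the short intervals $[\frac{a}{q}-\delta, \frac{a}{q}+\delta]$, its transform factors as $\frac{1}{2\delta\Lambda}\cdot \frac{\sin(2\pi\delta\xi)}{\pi\xi} \cdot \sum_{q\in\mathfrak{Q}} \sideset{}{^*}\sum_{a \bmod q} e\!\left(-\tfrac{a}{q}\xi\right)$. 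The inner double sum over $a,q$ is a sum of Ramanujan sums $c_q(\xi)$ when $\xi$ is an integer, and more generally is controlled by the same type of estimate; the factor $\frac{\sin(2\pi\delta\xi)}{2\pi\delta\xi}$ is $\approx 1$ for $|\xi| \ll \delta^{-1}$ and decays like $(\delta\xi)^{-1}$ beyond that.

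The key point is that $\tilde{I}_{\mathfrak{Q},\delta}$ is designed so that its mean value is $1$ (the normalization by $\Lambda = \sum_q \varphi(q)$ ensures $\int \tilde{I}_{\mathfrak{Q},\delta} = \int I_{[0,1]} = 1$, i.e. the two transforms agree at $\xi = 0$), and its transform at small nonzero frequencies is close to that of $I_{[0,1]}$. I would then split $\int_{-\infty}^{\infty} |\widehat{I_{[0,1]}}(\xi) - \widehat{\tilde I_{\mathfrak{Q},\delta}}(\xi)|^2\,d\xi$ into the ranges $|\xi| \leqslant Q^2$ (or thereabouts) and $|\xi| > Q^2$. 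In the low range the two transforms are genuinely close after using the cancellation in $\sum_{q}\sideset{}{^*}\sum_{a} e(-a\xi/q)$ — one expects square-root-type cancellation on average in $\xi$, and the normalization $\Lambda^{-1} \asymp Q^{-2}$ is what produces the $Q^{2+\epsilon}/(\delta\Lambda^2)$ shape; in the high range both transforms are individually small ($\widehat{I_{[0,1]}}$ decays like $|\xi|^{-1}$, and $\widehat{\tilde I_{\mathfrak{Q},\delta}}$ decays because of the $\sin(2\pi\delta\xi)/(\delta\xi)$ factor), so their $L^2$-contributions are negligible.

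The main obstacle is bookkeeping the cancellation in the exponential sum $\sum_{q \in \mathfrak{Q}} \sideset{}{^*}\sum_{a \bmod q} e(-a\xi/q)$ uniformly in $\xi$ and integrating its square against the weight coming from $\min(1,|\xi|^{-1})^2$ and $\min(1,(\delta\xi)^{-1})^2$; this is exactly where the $Q^{2}$ and the $\delta$ enter, and where one must be careful that $\mathfrak{Q}$ is an \emph{arbitrary} subset of $[1,Q]$ rather than all of $[1,Q]$, so only trivial or near-trivial estimates on the $q$-sum are available. Since the statement only claims the bound up to $Q^\epsilon$, I would not attempt to extract cancellation beyond what is forced by orthogonality; instead I would bound the $a$-sum by $c_q(\xi) \ll (q,\xi)$ (or by $\varphi(q)$ trivially off the main term), use $\sum_{q\leqslant Q}(q,\xi) \ll Q\,\tau(\xi) \ll Q^{1+\epsilon}$ on average, and conclude. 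As the excerpt notes, a clean short proof already appears as Lemma 4 in \cite{M2}, so I would ultimately cite that and sketch only the Parseval step.
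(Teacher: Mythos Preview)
The paper does not prove this lemma at all --- it simply cites Lemma~4 of \cite{M2}. Your plan to end by citing \cite{M2} therefore matches the paper exactly, and the Parseval sketch you give is indeed the standard argument behind that reference.

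One technical wrinkle in your sketch: you propose to bound the inner sum $\sideset{}{^*}\sum_{a \bmod q} e(-a\xi/q)$ by the Ramanujan-sum estimate $c_q(\xi) \ll (q,\xi)$, but this is only meaningful for \emph{integer} $\xi$, whereas your Fourier-transform variable $\xi$ runs over $\mathbb{R}$. The clean fix is to work with the Fourier \emph{series} on $\mathbb{R}/\mathbb{Z}$ rather than the Fourier transform on $\mathbb{R}$: periodize $\tilde I_{\mathfrak{Q},\delta}$ with period $1$ (which is harmless since $\delta \ll Q^{-1}$ keeps the short intervals essentially inside $[0,1]$), compute its Fourier coefficients at integers $n$ as $\frac{\sin(2\pi\delta n)}{2\pi\delta n}\cdot \frac{1}{\Lambda}\sum_{q\in\mathfrak{Q}} c_q(n)$, and then apply Parseval together with $\sum_{q\le Q}(q,n)\ll Q\,\tau(n)$ and $\sum_{n\neq 0}\min(1,(\delta n)^{-2})\,n^{\epsilon}\ll \delta^{-1-\epsilon}$. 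This yields $\frac{Q^{2+\epsilon}}{\delta\Lambda^2}$ directly, with no need to split into low and high frequency ranges or to worry about non-integer $\xi$.
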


\section{The Deduction of Theorem \ref{cro1} and Corollary \ref{cro1_1} from Proposition \ref{thm1}} \label{sed}
In this section, we prove Theorem \ref{cro1} and Corollary \ref{cro1_1} by assuming Theorem \ref{thm1}.

\begin{proof}[Proof of Theorem \ref{cro1}] 
First, we assume that $f$ (resp. $g$) is a holomorphic new Hecke eigenform on $\mathbb{H}^2$ with level $N$ (resp. $M$) and weight $\kappa$ (resp. $k$).  Also assume that $N$ is square-free, $M$ and $N$ are coprime, $N<M$. Then $\mathfrak{Q}:=\mathfrak{Q}(f\otimes g) = (NM)^2$ is the conductor of $f\otimes g$ in this case by the argument in Section \ref{se1.2}.

Assume that $s=\frac{1}{2}+\mu$ in Lemma \ref{FE}, where $\mathbf{Re}(\mu) < \frac{2}{\log \mathfrak{Q}}$. From the Approximate Functional Equation, we have 
\begin{align*}
&L\(\frac{1}{2}+\mu,f\otimes g\)=\sum_{n=1}^{\infty}\frac{\lambda_f(n)\lambda_g(n)}{\sqrt{n}}n^{-\mu}V_{\frac{1}{2}+\mu}\(\frac{n}{\sqrt{\mathfrak{Q}}}\)+\varepsilon\(f\otimes g,\frac{1}{2}+\mu\)\sum_{n=1}^{\infty}\frac{\overline{\lambda_f(n)}\overline{\lambda_g(n)}}{\sqrt{n}}n^{\mu}V_{\frac{1}{2}-\mu}\(\frac{n}{\sqrt{\mathfrak{Q}}}\).
\end{align*}

Assume that $h_0(x)$ is a positive smooth function, compactly supported on $[\frac{1}{2},\frac{5}{2}]$ with bounded derivatives. And  when $X$ runs over values $2^{v}$ with $v =  -1, 0, 1, 2, \dots$, for any $x\geqslant1$,
$$\sum_{X}h_0\(\frac{x}{X}\)=1.$$

From \eqref{Vs}, after applying a smooth partition of unity and Cauchy inequality, one can obtain that 
$$L\(\frac{1}{2}+\mu,f\otimes g\) \ll_{t_f,\kappa,A,\epsilon} \mathfrak{Q}^{\epsilon} \sum_{X} \left|L_{f\otimes g}(X)\right| \(1+\frac{X}{\(|\mu|+1\)\sqrt{\mathfrak{Q}}}\)^{-A_0}$$
where
$$L_{f\otimes g}(X) :=\sum_{n}\frac{\lambda_f(n)\lambda_g(n)}{\sqrt{n}}h_0\(\frac{n}{X}\)n^{\pm\mu}V_{\frac{1}{2}\pm\mu}\(\frac{n}{\sqrt{\mathfrak{Q}}}\)$$.

Then, for a fixed $X\gg\sqrt{\mathfrak{Q}}M^{\epsilon}$, we choose $A_0 = 100/\epsilon$ and $h\(\frac{x}{X}\) = h_0\(\frac{x}{X}\)x^{\pm\mu}V_{\frac{1}{2}\pm\mu}\(\frac{x}{\sqrt{\mathfrak{Q}}}\)$ to get 
$$L\(\frac{1}{2}+\mu,f\otimes g\) \ll_{t_f,\kappa,\epsilon}\mathfrak{Q}^{\epsilon}\sum_{X \ll \sqrt{\mathfrak{Q}}\((1+|\mu|)M\)^{\epsilon}}\left|L_{f\otimes g}(X)\right| +O_{\epsilon}\(\((1+|\mu|)M\)^{-50}\).$$

Also for such $h(x)$, one can verify that 
$$h^{(j)}(x) \ll_j (1+|\mu|)^j.$$

Next, summing over all the $g \in \mathcal{B}^*_{\kappa}(M)$ and applying Cauchy-Schwartz, we have
\begin{align*}
 \sum_{g \in \mathcal{B}^*_{\kappa}(M)} \omega_g^{-1}\left| L\(\frac{1}{2}+\mu,f\otimes g\) \right|^2
& \ll_{t_f,\kappa,\epsilon}
\mathfrak{Q}^{\epsilon}\sum_{X \ll \sqrt{\mathfrak{Q}}\((1+|\mu|)M\)^{\epsilon}}\sum_{g \in \mathcal{B}_{\kappa}(M)}\omega_g^{-1}\left| L_{f\otimes g}(X) \right|^2 +\((1+|\mu|)M\)^{-50}.
\end{align*}

Eventually, applying Theorem \ref{thm1}, we have
\begin{align*}
 \sum_{g \in \mathcal{B}^*_{\kappa}(M)} \omega_g^{-1}\left| L\(\frac{1}{2}+\mu,f\otimes g\) \right|^2 \ll
\(1+\(1+\(1+|t|\)^{15}\frac{N^{\frac{1}{3}}}{M^{\frac{1}{3}}}\)M^{-\beta}N\)(\(1+|t|\)MN)^{\epsilon}.
\end{align*}

Since $g \in \mathcal{B}^*_{\kappa}(M)$ is normalized such that $\psi_g(1) = 1$, we have that $M^{1-\epsilon}\ll\omega_g = \braket{g,g}\ll M^{1+\epsilon}$ (see \cite{HL}). Therefore, we can obtain the final bound.\\

Finally, the derivative of $L$-function at $\frac{1}{2}+it$ can be represented as an integral over the circle with radius $\frac{1}{\log \mathfrak{Q}}$ and center $\frac{1}{2}+it$. So the second moment of $L$-function itself gives the second moment of all the derivatives.
\end{proof}

\begin{proof}[Proof of Corollary \ref{cro1_1}]
By symmetry, one can assume that $N<M$. Then from \cite{KMV}, there are effective numbers $A, C>1$ and $\delta>0$ such that
 \begin{align}\label{0_1}
 L\(\frac{1}{2}+it, f\otimes g\) \ll (1+|t|)^CN^AM^{\frac{1}{2}-\delta+\epsilon},
 \end{align}
 where $A \geqslant 1$.
 
 Theorem \ref{cro1} gives that 
  \begin{align}\label{0_2}
 L\(\frac{1}{2}+it, f\otimes g\) \ll (1+|t|)^{12}(MN)^{\frac{1}{2}+\epsilon}\(N^{-\frac{1}{2}}+M^{-\frac{\beta}{2}}\).
 \end{align}
 
 Let, $N=M^{x}$. When $x \leqslant \frac{\delta}{A}$, \eqref{0_1} is bounded by $\mathfrak{Q}^{\frac{1}{4}-\frac{\delta}{4(A+\delta)}+\epsilon}$. When $\frac{\delta}{A} < x \leqslant 1$, \eqref{0_2} is bounded by $\mathfrak{Q}^{\frac{1}{4}+\epsilon}\(\mathfrak{Q}^{-\frac{\delta}{4(A+\delta)}}+\mathfrak{Q}^{-\frac{\beta A}{4(A+\delta)}}\)$. 
\end{proof}

\begin{rem}
By carefully going through the proof in \cite{KMV}, we can choose $\delta = \frac{1}{80}$ and $A=10$. So the final bound can be $\mathfrak{Q}^{\frac{1}{4}-\frac{1}{3204}+\epsilon}$. One can use amplification method in our argument to get a sharper bound. However, to keep our argument short, we simply use their results. 
\end{rem}

\section{A Sketch Proof of Propositon \ref{thm1}}\label{ses}
In this section, we provide a sketch of the proof. It follows the same lines as in \cite{HM} until step 7.

For simplicity, we assume that $M=Q, N=P$ are both primes in this section. Then $\mathfrak{Q} =(PQ)^2$ is the conductor. Let $f \in \mathcal{B}^{*}_k(P)$ be a Hecke newform. Its Fourier coefficients are normalized such that $\psi_f(n)=\lambda_f(n)$.  Furthermore, we restrict to the case of $X \sim \mathfrak{Q}^{1/2} = PQ$. Therefore, we only show the following in sketch 
$$\sum_{g \in \mathcal{B}_{\kappa}(Q)}\omega_g^{-1}\left|\sum_{n\sim PQ}\frac{\lambda_f(n)\psi_g(n)}{\sqrt{n}}\right|^2 \ll_{\epsilon,k,\kappa} P\(\frac{1}{P}+\frac{1}{Q^{\beta}}+\frac{P^{1/3}}{Q^{1/3+\beta}}\)(PQ)^{\epsilon},$$
where $n \sim PQ$ means that $n$ varies from $PQ$ to $2PQ$.\\

\textbf{Step 1. Reducing to sum of Kloosterman sums via trace formula. }
Now, after expanding the square and applying Petersson formula (Lemma \ref{lemPT}), we have
\begin{align}
&\sum_{g \in \mathcal{B}_{\kappa}(Q)}\omega_g^{-1}\left|\sum_{n \sim PQ}\frac{\lambda_f(n)\psi_g(n)}{\sqrt{n}}\right|^2  = \sum_{n \sim PQ}\frac{|\lambda_f(n)|^2}{n} + \sum_{\substack{d\equiv 0(Q)}}\sum_{\substack{n \sim PQ\\ m\sim PQ} }\frac{1}{d}S(m,n;d)\frac{\lambda_f(n)\overline{\lambda_f(m)}}{\sqrt{nm}} J_{\kappa-1}\(\frac{4\pi\sqrt{mn}}{d}\).
\end{align}

A well-known Rankin's result tells that the first term is bounded by a constant of size at most $(PQ)^{\epsilon}$.\\

\textbf{Step 2. Removing large and small values of $D$. }
Now, through the idea in section \ref{se2}, we can truncate $d$ into the range such that $d \sim PQ$ with a loss at most of size $P\(\frac{1}{P}+\frac{1}{Q^{\beta}}\)$. Next, $\sqrt{mn}/{d} \sim 1$ in this range, which is also the transition range for the Bessel function. 

Therefore, we only need to consider 
$$R_f := \sum_{\substack{d \equiv 0(Q)\\ d\sim PQ}}\sum_{\substack{n \sim PQ\\ m\sim PQ} }\frac{1}{d}S(m,n;d)\frac{\lambda_f(n)\overline{\lambda_f(m)}}{\sqrt{nm}} .$$

\textbf{Step 3. Applying the Voronoi formula to convert Kloosterman Sums to Ramanujan sums. }
Now, apply Voronoi formula (Lemma \ref{lemV}) on $n$, we have 
\begin{align*}
&R_f = \sum_{\substack{d\equiv 0(Q)\\ d\sim PQ}}\sum_{\substack{n \sim PQ\\ m\sim PQ} }\frac{1}{d}\sideset{}{^*}\sum_{a(d)}e\(\frac{\bar{a}m+an}{d}\)\frac{\lambda_{f^{*}}(n)\overline{\lambda_f(m)}}{\sqrt{nm}}\\
& \approx \sum_{\substack{d \equiv 0(QP)\\ d\sim PQ}}\sum_{\substack{n \sim PQ\\ m\sim PQ} }\frac{1}{d}\sideset{}{^*}\sum_{a(d)}e\(\frac{\bar{a}m}{d}\)e\(-\frac{\bar{a}n}{d}\)\frac{\lambda_{f^{*}}(n)\overline{\lambda_f(m)}}{\sqrt{nm}} + \sum_{\substack{\substack{d \equiv 0(Q)\\ d \nequiv 0(P)}}}\sum_{\substack{n \sim P^2Q\\ m\sim PQ} }\frac{1}{d}\sideset{}{^*}\sum_{a(d)}e\(\frac{\bar{a}m}{d}\)e\(-\frac{\overline{aP}n}{d}\)\frac{\lambda_{f^{*}}(n)\overline{\lambda_f(m)}}{\sqrt{nm}}\\
&\approx \sum_{\substack{d \equiv 0(QP)\\ d\sim PQ}}\sum_{\substack{n \equiv m (d)\\ n,m\sim PQ} }\frac{\lambda_{f^{*}}(n)\overline{\lambda_f(m)}}{\sqrt{nm}} + \sum_{\substack{\substack{d \equiv 0(Q)\\ d\sim PQ\\ d \nequiv 0(P)}}}\sum_{\substack{Pm \equiv n (d)\\n \sim P^2Q\\ m\sim PQ} }\frac{\lambda_{f^{*}}(n)\overline{\lambda_f(m)}}{\sqrt{nm}}.
\end{align*}

The first term contains only constant many terms with respect to $d$ and it is bounded by $(PQ)^{\epsilon}$.

\begin{rem}
The main difference between the trivial nebentypus case and nontrivial nebentypus case is that, after applying Voronoi formula in the later case, we can only get Gaussian sums rather than Ramanujan sums. In order to deal with the later case, we need to use trace formula reversely and apply a subconvexity bound of $GL_2 \times GL_1$ as the way in \cite{HM1}.
\end{rem}

\textbf{Step 4. Treating the Zero Shift. }
For the second term above, we consider the case that $Pm=n$. we have
\begin{align}
\sum_{\substack{\substack{d \equiv 0(Q)\\ d\sim PQ\\ d \nequiv 0(P)}}}\sum_{\substack{Pm =n\\n \sim P^2Q\\ m\sim PQ} }\frac{\lambda_{f^{*}}(n)\overline{\lambda_f(m)}}{\sqrt{nm}}\ll (PQ)^{\epsilon}.
\end{align} 
Here we used 
Rankin's bound Lemma \ref{Rank}, multiplicity of Hecke-eigenvalus, and the bound  $\left|\lambda_f(P)\right|= P^{-1/2}$.\\

\textbf{Step 5. Applying the Circle Method. }
Let $Pm - n =rd$. We are left with the case that $r\neq 0$.

Apply the circle method to detect the relation $Pm - n =rd$ for some nonzero integers $r$. We have
\begin{align}
&\sum_{\substack{\substack{d \equiv 0(Q)\\ d\sim PQ\\ d \nequiv 0(P)}}}\sum_{\substack{Pm \equiv n (d)\\n \sim P^2Q\\ m\sim PQ} }\frac{\lambda_{f^{*}}(n)\overline{\lambda_f(m)}}{\sqrt{nm}} \approx
\sum_{\substack{\substack{d \equiv 0(Q)\\ d\sim PQ\\ d \nequiv 0(P)}}}\sum_{\substack{0<|r| \ll P\\n \sim P^2Q\\ m\sim PQ} }\sum_{c\sim C}\frac{1}{cC}\sideset{}{^*}\sum_{a(c)}e\(\frac{a(Pm-n-rd)}{c}\)\frac{\lambda_{f^{*}}(n)\overline{\lambda_f(m)}}{\sqrt{nm}}.
\end{align}

Since we are using Jutila's circle method, one can assume that $C$ is sufficiently large and $(c,P) =1$ to simplify our proof.\\

\textbf{Step 6. Applying Vornoi formula twice to regenerate Kloostermann sums. }
Then, we apply Voronoi formula (Lemma \ref{lemV}) twice for both $n,m$ to get
\begin{align}
\sum_{\substack{\substack{d \equiv 0(Q)\\ d\sim PQ\\ d \nequiv 0(P)}}}\sum_{\substack{0<|r| \ll P\\n \sim C^2/PQ\\ m\sim C^2/Q} }\sum_{c\sim C}\frac{1}{cC}S\(\overline{P^2}(m - Pn), rd; c\)\frac{\lambda_{f^{*}}(n)\overline{\lambda_f(m)}}{\sqrt{nm}}.
\end{align}

Next, set $v = m-Pn$, so $v \ll C^2/Q$ and our sum becomes
\begin{align}
\sum_{\substack{\substack{d \equiv 0(Q)\\ d\sim PQ\\ d \nequiv 0(P)}}}\sum_{\substack{0<|r| \ll P} }\sum_{k\sim C^2/Q}\sum_{c\sim C}\frac{1}{cC}S\(\overline{P^2}v, rd; c\)\(\sum_{m-Pn = v}\frac{\lambda_{f^{*}}(n)\overline{\lambda_f(m)}}{\sqrt{nm}}\).
\end{align}

\textbf{Step 7. Applying the large sieve type inequality to the sum of Kloostermann sums. }
Now, in order to apply the large sieve type inequality (Proposition \ref{pro1}), we assume that $h = rd/Q $, $w = Q$, $r = P^2$, $v = v$ and $s=1$. The sum becomes
\begin{align}
\sum_{h \sim P^2}b(h)\sum_{v\sim C^2/Q}\sum_{c\sim C}\frac{1}{cC}S\(\overline{P^2}v, hQ; c\)\(\sum_{m-Pn = v}\frac{\lambda_{f^{*}}(n)\overline{\lambda_f(m)}}{\sqrt{nm}}\)
\end{align}
where $b(h) = \sum_{\substack{rd\' = h\\ r,d\'\sim P\\ d\' \nequiv 0(P)}}1 \leq \tau(h)$.\\

Notice that 
\begin{align}\label{role_of_supnorm}
\sum_v\(\sum_{m-Pn = v}\frac{\lambda_{f^{*}}(n)\overline{\lambda_f(m)}}{\sqrt{nm}}\)^2 \ll P^{\frac{2}{3}+\epsilon}
\end{align}
by the argument in Section \ref{step6}. And 
$$\sum_{h\sim P^2}|b(h)|^2 \leq \sum_{h\sim P^2} \tau(h) \ll P^{2+\epsilon}.$$
Thus, recalling that $C$ is sufficiently large, $R_f$ is bounded by 
$$\frac{\sqrt{P^2}}{C}\(1+\sqrt{\frac{C^2Q^{-1}}{P^2}}\)\(1+\sqrt{\frac{P^2}{P^2}}\)Q^{\theta}\sqrt{P^{\frac{2}{3}+\epsilon}}\sqrt{P^{2+\epsilon}}\(PQC\)^{\epsilon} \ll \frac{P^{\frac{4}{3}}}{Q^{\frac{1}{2}-\theta}}\(PQC\)^{3\epsilon}.$$

\begin{rem}
In the last step above, through the large sieve inequality in \cite{DI}, one can only get convexity bound. So our generalization of large sieve inequality is crucial. \eqref{role_of_supnorm}, which is a direct consequence of nontrivial sup-norm bound Proposition \ref{sup}, plays an essential role here. Since we need our final bound to be less than $P^{\frac{3}{2}-\theta}Q^{\theta-\frac{1}{2}}$, a sup-norm bound better than $P^{\frac{1}{2}-\theta}$ is required ( the trivial one is $P^{\frac{1}{2}}$ under our normalization ).\\
\end{rem}

\section{The Proof of Proposition \ref{thm1}}
\label{se2}
Let $f$ be a newform with level $N$. Let the fixed number $k$ (resp. $t_f$) be the weight (resp. archimedean parameter) of $f$ when $f$ is holomorphic (resp. Maa{\ss}).  Also assume that $N$ is square-free, $M$ and $N$ are coprime. Let $h$ be a smooth function, compactly supported on $[\frac{1}{2},\frac{5}{2}]$ such that $h^{(j)}\ll Z_h^{j}$ for a positive constant $Z_h$. Let $X\geqslant 1$. 

\subsection{\textbf{Step 1. Reducing to sum of Kloosterman sums via trace formula. }}
By Petersson formula (Lemma \ref{lemPT}), we have
\begin{align*}
 & \sum_{g \in B_{\kappa}(M)} \omega_g^{-1}\left| \sum_{n \geqslant 1}\frac{\lambda_f(n)\lambda_g(n)}{\sqrt{n}}h\(\frac{n}{X}\)\right|^2\\
= & \sum_{n}\frac{|\lambda_f(n)|^2}{n}h^2\(\frac{n}{X}\) + \sum_{d\equiv 0(M)}\sum_{1 \leqslant n,m }\frac{1}{d}S(m,n,d)\frac{\lambda_f(m)\overline{\lambda_f(n)}}{\sqrt{nm}}J_{\kappa-1}\(\frac{4\pi\sqrt{mn}}{d}\)h\(\frac{n}{X}\)h\(\frac{m}{X}\).
\end{align*}

Due to Rankin's bound, the first term is bounded by $O_{\epsilon}(N^{\epsilon})$. Now we consider the second term.

First, we use the partition of unity $1=\sum_{D}h_0\(\frac{d}{D}\)$ 
, where $D$ runs over values $2^{v}$ with $v = -1, 0, 1, 2, \dots$. Furthermore, we can assume that $h_0(x)$ is a smooth function, compactly supported on $[\frac{1}{2},\frac{5}{2}]$ with bounded derivatives. Finally, let

$$R_{f}(X,D) : = \sum_{\substack{d\equiv0(M)}}\sum_{n,m }\frac{1}{d}S(m,n;d)\frac{\lambda_f(m)\overline{\lambda_f(n)}}{\sqrt{nm}} h_0\(\frac{d}{D}\)h\(\frac{m}{X}\)h\(\frac{n}{X}\)J_{\kappa-1}\(\frac{4\pi\sqrt{mn}}{d}\),$$
such that 
\begin{align}\label{RR_0}
\sum_{g \in B_\kappa(M)} \omega_g^{-1}\left| \sum_{n \geqslant 1}\frac{\lambda_f(n)\lambda_g(n)}{\sqrt{n}}h\(\frac{n}{X}\)\right|^2 = O((XN)^{\epsilon}) + \sum_{D} R_f(X,D)
\end{align}
where $D$ runs over values $2^{v}$ with $v = -1, 0, 1, 2, \dots$.

\subsection{\textbf{Step 2. Removing large and small values of $D$. }}
In this section, we prove the following,
\begin{lem}
\begin{align}
\label{2.3}
&\sum_{g \in B_\kappa(M)} \omega_g^{-1}\left| \sum_{n\geqslant1}\frac{\lambda_f(n)\lambda_g(n)}{\sqrt{n}}h\(\frac{n}{X}\)\right|^2 \ll_{\epsilon,t_f,\kappa} \(1 +M^{-\beta}\frac{X}{M}\)(XMN)^{\epsilon} + \max_{X(M)^{-\beta} < D < X(M)^{2\beta}}\left|R_f(X,D)\right| (XMN)^{\epsilon}. 
\end{align}
\end{lem}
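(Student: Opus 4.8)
The plan is to start from the dyadic decomposition \eqref{RR_0}, so that it suffices to estimate $\sum_{D}R_f(X,D)$ with $D$ running over powers of $2$, and then to split the range of $D$ into three pieces: the \emph{large} range $D\geqslant XM^{2\beta}$, the \emph{middle} range $XM^{-\beta}<D<XM^{2\beta}$, and the \emph{small} range $M/2\leqslant D\leqslant XM^{-\beta}$ (note that $R_f(X,D)=0$ once $D<M/2$, since $d\equiv0\,(M)$). The middle range contains only $O_{\epsilon}((XMN)^{\epsilon})$ dyadic values, so its total is absorbed into the term $\max_{XM^{-\beta}<D<XM^{2\beta}}|R_f(X,D)|\,(XMN)^{\epsilon}$ appearing in \eqref{2.3}; it then remains to bound the large and the small contributions by $\ll(1+M^{-\beta}X/M)(XMN)^{\epsilon}$.

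For the large range I would invoke the large sieve inequality of Lemma \ref{lemLS} with weight $\eta(c)=h_0(c/D)$, with level $M$ (so that $c\equiv0\,(M)$ matches the condition $d\equiv0\,(M)$ in $R_f(X,D)$), and with the sequences $x_n=\overline{\lambda_f(n)}h(n/X)/\sqrt n$ and $y_m=\lambda_f(m)h(m/X)/\sqrt m$; Rankin's bound (Lemma \ref{Rank}) gives $\|x\|_2,\|y\|_2\ll(XN)^{\epsilon}$, so Lemma \ref{lemLS} yields $R_f(X,D)\ll(X/D)^{\kappa-3/2}(1+X/M)(XMN)^{\epsilon}$. Since $\kappa\geqslant2$ we have $\kappa-3/2\geqslant1/2>0$, so summing the geometric series over dyadic $D\geqslant XM^{2\beta}$ produces a total $\ll M^{-2\beta(\kappa-3/2)}(1+X/M)(XMN)^{\epsilon}\ll(1+M^{-\beta}X/M)(XMN)^{\epsilon}$, using $2\beta(\kappa-3/2)\geqslant\beta$.

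The small range is the main obstacle. Here $4\pi\sqrt{mn}/d\asymp X/D\gg M^{\beta}$, so $J_{\kappa-1}(4\pi\sqrt{mn}/d)$ is genuinely oscillatory, and neither the absolute bound $J_{\kappa-1}(y)\ll y^{-1/2}$ combined with Weil's bound, nor a direct use of Lemma \ref{lemLS} (whose bound grows like $(X/D)^{1/2}$), is sufficient once $X\gg M$. The idea is to extract this oscillation: open the Kloosterman sum into additive characters and apply Voronoi summation (Lemma \ref{lemV}) in the $n$-variable, whose weight $h(n/X)J_{\kappa-1}(4\pi\sqrt{mn}/d)$ is a smooth function of controlled type; the inner sum over residues then collapses to a Ramanujan sum $c_d(m\mp n'\overline{N_2})$, leaving the dual variable $n'$ against an oscillatory integral of two Bessel factors. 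Estimating that integral by repeated integration by parts, with the Bessel estimates of Section \ref{se1.3} (especially \eqref{JB} and Lemma \ref{lem7}), localizes $n'$ to the resonance $n'=mN_2+v$ with $|v|\ll DN_2(XMN)^{\epsilon}$; the diagonal $v=0$ contributes $\ll\frac{D}{M}(XN)^{\epsilon}$, which sums over dyadic $D\leqslant XM^{-\beta}$ to $\ll M^{-\beta}\frac{X}{M}(XMN)^{\epsilon}$, while for $v\neq0$ one is left with Ramanujan-weighted shifted convolution sums $\sum_m\lambda_f(m)\overline{\lambda_{f^{*}}(mN_2+v)}h(m/X)$.

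The decisive input at this last point --- and the reason the non-trivial sup-norm bound is indispensable --- is that these shifted convolution sums must be estimated with a power saving: I would bound them using Proposition \ref{sup} (the sup-norm bound for $f$), exactly as in \eqref{role_of_supnorm}, together with Wilton's bound (Lemma \ref{Wilton}); the trivial bound would lose a factor $\sqrt N$ and fail. Summing the resulting estimate over $v$, over $d\equiv0\,(M)$ with $d\sim D$, and over dyadic $D\leqslant XM^{-\beta}$ then gives a total $\ll(1+M^{-\beta}X/M)(XMN)^{\epsilon}$, which together with the large range and \eqref{RR_0} yields \eqref{2.3}. In effect the small range is a simpler instance of the machinery developed in the subsequent steps of this section (Voronoi summation, the circle method, the generalized large sieve of Theorem \ref{pro1}, and the sup-norm input), carried out in a regime where every estimate has ample room to spare.
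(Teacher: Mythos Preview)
Your treatment of the large range $D\geqslant XM^{2\beta}$ matches the paper's exactly (apply Lemma~\ref{lemLS}, use Rankin, sum the geometric series). For the small range $D\leqslant XM^{-\beta}$, however, the paper takes a completely different and much shorter route. Rather than opening the Kloosterman sums via Voronoi and running the Steps~3--7 machinery in miniature, the paper simply applies the Kuznetsov formula (Lemma~\ref{lemKT}) to the Kloosterman sum in $R_f(X,D)$, viewing it as the geometric side for the test function $W_{m,n}(x)=J_{\kappa-1}(x)\,h_0\bigl(4\pi\sqrt{mn}/(Dx)\bigr)$. Since $W_{m,n}$ is supported where $x\asymp X/D\geqslant M^{\beta}$, Lemma~\ref{lem12} gives $\hat W(t),\tilde W(t)\ll (X/D)^{-1+\epsilon}$ for $|t|\ll (X/D)^{1/2+\epsilon}$ and rapid decay beyond; feeding these into the spectral large sieve (Lemma~\ref{lem13}) yields $R_f(X,D)\ll(1+M^{-\beta}X/M)(XMN)^{\epsilon}$ in a few lines. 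In particular this step requires neither the sup-norm bound, nor Wilton's bound, nor Theorem~\ref{pro1}; your claim that the sup-norm input is ``indispensable'' here is a misreading of where it is actually used in the paper --- it enters only in the analysis of the \emph{middle} range in the later steps, not in this lemma.

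Your proposed route for small $D$ is essentially the entire Voronoi/circle-method/shifted-convolution apparatus of Sections~\ref{se2}--\ref{seLS}, and you have not actually verified the final shifted-convolution estimate in this regime beyond the assertion of ``ample room to spare''. It may be possible to push it through, but it would be substantially longer than the paper's Kuznetsov-reversal argument and would duplicate machinery that is only genuinely needed for the middle range.
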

By \eqref{RR_0}, it suffices to estimate $R_f(X,D)$ when $D$ is large and small. 
\subsubsection{Eliminating $R_f(X,D)$ when $D$ is large}
Assume that $D > X(M)^{2\beta}$ for some positive $\beta$.

Let $x_n = \frac{\lambda_f(n)}{\sqrt{n}}h\(\frac{n}{X}\)$ and $y_m = \frac{\lambda_f(m)}{\sqrt{m}}h\(\frac{m}{X}\)$. We apply Lemma \ref{lemLS} to $R_f(X,D)$.  Therefore
\begin{align}
R_f(X,D) &=  \sum_{\substack{d\equiv0(M)}}\sum_{m,n }\frac{h_0\(\frac{d}{D}\)}{d}S(m,n;d)\overline{x_n}y_m J_{\kappa-1}\(\frac{4\pi\sqrt{mn}}{d}\)\label{2.1}\ll_{\epsilon, \kappa}D^{\epsilon}\(\frac{X}{D}\)^{1/2}\(1+\frac{X}{M}\)\|x\|_2\|y\|_2\notag\\
& \ll_{\epsilon, \kappa} M^{-\beta}\(1+\frac{X}{M}\)(XM)^{\epsilon} \sum_n\frac{|\lambda_f(n)|^2}{n}h^2\(\frac{n}{X}\)\notag \ll_{\epsilon, \kappa}\(1+M^{-\beta}\frac{X}{M}\)(XMN)^{\epsilon}.\notag 
\end{align}

\subsubsection{Eliminating $R_f(X,D)$ when $D$ is small}
Now, assume that  $D<X(M)^{-\beta}$ for the same $\beta$ as in the previous case.\\

For fixed $m,n$, consider the test function $$W_{m,n}(x) := J_{\kappa-1}(x)h_0\(\frac{4\pi\sqrt{mn}}{Dx}\),$$ and rewrite $R_f(X,D)$ as
\begin{align*}
\sum_{d\equiv 0(M)}\sum_{m,n}\frac{1}{d}S(m,n;d)\frac{\lambda_f(m)\overline{\lambda_f(n)}}{\sqrt{mn}}h\(\frac{m}{X}\)h\(\frac{n}{X}\)W_{m,n}\(\frac{4\pi\sqrt{mn}}{d}\).
\end{align*}

Notice that $W_{m,n}$ is supported on the interval $[\frac{\pi X}{D}, \frac{10\pi X}{D}]$. Applying Kuznietsov Trace Formula (Lemma \ref{lemKT} with $\fa$ and $\fb$ equaling cusp at $\infty$, $\Gamma=\Gamma(M)$) to $R_f(X,D)$, we obtain
\begin{align}
&R_f(X,D) = \sum_{m,n}\frac{\lambda_f(m)\overline{\lambda_f(n)}}{\sqrt{mn}}h\(\frac{m}{X}\)h\(\frac{n}{X}\)\times\\
&\Bigg(\sum_{j=1}^{\infty}\frac{\hat{W}(t_j)}{\cosh \pi t_j}\rho_j(m)\overline{\rho_j(n)}+ \sum_{\mathfrak{c}}\frac{1}{\pi}\int_{-\infty}^{\infty}\(\frac{n}{m}\)^{-it}\hat{W}(t)\varphi_{\mathfrak{c}}(m,\frac{1}{2} + it)\overline{\varphi_{\mathfrak{c}}(n,\frac{1}{2} + it)}dt\\
&+ \frac{1}{2\pi}\sum_{0<l\equiv 0 (2)}\frac{i^l(l-1)!}{(4\pi)^{l-1}}\tilde{W}(l-1)\sum_{1 \leq j \leq \dim S_l(\Gamma)}\psi_{jl}(m)\overline{\psi_{jl}(n)}\Bigg).
\end{align}

By Lemma \ref{lem12} with $H(x) = h_0\(\frac{4\pi\sqrt{mn}}{Dx}\)$, we know that when $D<X(M)^{-\beta}$ the weight functions satisfy
\begin{subnumcases}{\hat{W}(t), \tilde{W}(t)  \ll}
\(\frac{X}{D}\)^{-1+\epsilon} & $\text{if } t > 0 \text{ or } t=ir$, \label{2.2_1}\\ 
\(\frac{X}{Dt^{2}}\)^A\frac{X}{D}&$ \text{if } t>4t_f+2$.\label{2.2}
\end{subnumcases} 

Consider first the contribution from the sum over the Maass forms. By the Cauchy-Schwarz inequality, we have that
\begin{align}
\label{eqrho}
&\sum_{m,n}\frac{\lambda_f(m)\overline{\lambda_f(n)}}{\sqrt{mn}}h\(\frac{m}{X}\)h\(\frac{n}{X}\)\sum_{j=1}^{\infty}\frac{|\hat{W}(t_j)|}{\cosh \pi t_j}\rho_j(m)\overline{\rho_j(n)}\\
 &\ll \sum_T \sum_{T\leq|t_j|< 2T}\left|\frac{\hat{W}(t_j)}{\cosh \pi t_j}\right|\left|\sum_n \frac{\lambda_f(n)}{\sqrt{n}}h\(\frac{n}{X}\)\rho_j(n)\right|^2\\
 &\ll\(\sum_{T\leq \(\frac{X}{D}\)^{1/2 + \epsilon}} + \sum_{T>\(\frac{X}{D}\)^{1/2 + \epsilon}}\) \sum_{T\leq|t_j|< 2T}\max_{T\leq|t_j|< 2T}\{|\hat{W}(t_j)|\}\(T^2 + \frac{X}{M}\)(XM)^{\epsilon}\sum_n\frac{|\lambda_f(n)|^2}{n}h^2\(\frac{n}{X}\)
\end{align}
where $T$ goes over powers of $2$. For the last step above, we used Lemma \ref{lem13}.

Applying the bounds for $\hat{W}(t)$, we therefore obtain an upper bound for \eqref{eqrho}
\begin{align*}
&\ll_{\epsilon,\kappa} \(\frac{X}{D}\)^{-1+\epsilon}\(\(\frac{X}{D}\)^{1 + 2\epsilon} + \frac{X}{M}\)(XMN)^{\epsilon} + \(\frac{X}{D}\)^{-100}\(1 + \frac{X}{M}\)(XMN)^{\epsilon}\\
& \ll_{\epsilon,\kappa} \(1 + M^{-\beta}\frac{X}{M}\)(XMN)^{\epsilon}.
\end{align*}

Similarly, we have the same bound for $\varphi_{\mathfrak{c}}$ and $\psi_{jk}$. Therefore
\begin{align}
R_f(X,D)\ll_{\epsilon,\kappa} \(1 + M^{-\beta}\frac{X}{M}\)(XMN)^{\epsilon}.\label{21}
\end{align}

\subsection{\textbf{Step 3. Applying the Voronoi formula to convert Kloosterman Sums to Ramanujan sums. }}
Let $D$ be such that 
\begin{equation}
X(M)^{-\beta} < D < X(M)^{2\beta}\label{D}.
\end{equation}
As was done in \cite{HM}, we will apply the Voronoi formula (Lemma \ref{lemV}) to the $m$-sum. Assume that $(d,N)=R$ and $LR=N$. Set $N_2 = \frac{N}{(N,d)}=L$. Since $N$ is square-free, $(d,L)=1$. We have
 \begin{align}
R_{f}(X,D) =& 2\sum_{LR=N}\sum_{\pm}\eta_L^{\pm} \sum_{\substack{d\equiv 0(RM)\\ (d,L)=1}}\sum_{n, m}\frac{1}{d}\sideset{}{^*}\sum_{a(d)}\frac{\lambda_{f^*}(m)}{\sqrt{m}}e\(\mp\frac{\overline{aL}m}{d}\)e\(\frac{\overline{a}n}{d}\)\frac{\overline{\lambda_f(n)}}{\sqrt{n}}\label{2.5}\\
&\times h_0\(\frac{d}{D}\)h\(\frac{n}{X}\)\int_{0}^{\infty}h\(\frac{Ld^2t^2}{mX}\)\mathcal{J}^{\pm}_f(t)J_{\kappa-1}\(\frac{4\pi\sqrt{nL}t}{\sqrt{m}}\)dt \notag\\
= &  \sum_{LR=N}\sum_{\pm} \sum_{\substack{d\equiv 0(RM)\\ (d,L)=1}}\sum_{ n , m}\frac{1}{d}S(0,m\mp nL;d)\frac{\overline{\lambda_f(n)}\lambda_{f^*}(m)}{\sqrt{nm}}I^{\pm}_{L,X,D}(m,n,d)\notag\\
=&  \sum_{\pm}\sum_{LR=N} \sum_{\substack{d\equiv 0(RM)\\ (d,L)=1}}\sum_{bc=d}\frac{c\mu(b)}{d}\sum_{\substack{n, m\\m\mp nL =0(c)\\m\mp nL \neq 0}}\frac{\overline{\lambda_f(n)}\lambda_{f^*}(m)}{\sqrt{nm}}I^{\pm}_{L,X,D}(m,n,d) + R_{0}\notag 
\end{align} 
where
\begin{align}
I^{\pm}_{L,X,D}(m,n,d) = 2\eta^{\pm}_L h\(\frac{n}{X}\)h_0\(\frac{d}{D}\)\int_{0}^{\infty}h\(\frac{Ld^2t^2}{mX}\)\mathcal{J}^{\pm}_f(t)J_{\kappa-1}\(\frac{4\pi\sqrt{nL}t}{\sqrt{m}}\)dt,\label{I}
\end{align}
and the zero shift
\begin{align}
R_0 = \sum_{\pm}\sum_{LR=N} \sum_{\substack{d\equiv 0(RM)\\ (d,L)=1}}\frac{\varphi(d)}{d}\sum_{\substack{m\mp nL = 0}}\frac{\overline{\lambda_f(n)}\lambda_{f^*}(m)}{\sqrt{nm}}I^{\pm}_{L,X,D}(m,n,d).
\end{align}
Here we used the identity for Ramanujan sum
\begin{equation}
\label{Rama}
S(0,k;d) = \sum_{c|(d,k)}c\mu\(\frac{d}{c}\).
\end{equation}

\begin{rem}
As stated in Lemma \ref{lemV}, $\eta^{\pm}_L$ only depends on $L$ and $f$ and has norm $1$ in this case. 
\end{rem}


From the estimation of $I^{\pm}_{L,X,D}(m,n,d)$ in Lemma \ref{lem9}, we have that the contribution from those $m$ satisfying either
\begin{align*}
\left|\sqrt{\frac{mX}{LD^2}}-\frac{\sqrt{nX}}{D}\right| \gg\(1+Z_h\)M^{\epsilon} 
\end{align*}
for $f$ holomorphic, or
\begin{equation}
\sqrt{\frac{mX}{LD^2}} \gg (1+Z_h)M^{\epsilon} \text{ and } \left|\sqrt{\frac{mX}{LD^2}}-\frac{\sqrt{nX}}{D}\right| \gg\(1+Z_h\)M^{\epsilon}
\end{equation}
for $f$ Maa{\ss} are negligible. Thus we can truncate $m$ such that $m \ll M^{\epsilon}\lambda LD$ where $\lambda$ is defined to be
\begin{align}
\lambda = \max \left\{\frac{D}{X}, \frac{X}{D}\right\}\(1+Z_h\)^2.\label{lambda}
\end{align}
Then via \eqref{D} we have $\lambda \leq M^{2\beta}\(1+Z_h\)^2$. \\

Hence one can break apart the sum over $m$ dyadically such that
\begin{align}
&R_{f}(X,D) =\\
&\sum_{\substack{S=2^{i}\\S\ll M^{\epsilon}\lambda LD\\ LR=N}}\sum_{\substack{d\equiv0(RM)\\ (d,L)=1}}\sum_{\substack{bc=d\\ \pm}}\frac{c\mu(b)}{d}\sum_{\substack{n, m\\m\mp nL \equiv 0(c)}}\frac{\overline{\lambda_f(n)}\lambda_{f^*}(m)}{\sqrt{nm}}I^{\pm}_{L,X,D}(m,n,d)h_0\(\frac{m}{S}\) +R_0+ O(M^{-100}).\label{2.6}
\end{align}

\subsection{\textbf{Step 4. Treating the Zero Shift. }}\label{step4}
In \eqref{2.6}, the inner sum is over all the pairs $(m,n)$ such that $m\mp nL\equiv 0 (c)$. In this section, we will treat the $m-nL=0$ case ( we always have that $m+nL > 0$ ).
\begin{align}\label{zero shift}
R_0 &=\sum_{\substack{ LR=N}}\sum_{\substack{d\equiv0(RM)\\ (d,L)=1}}\sum_{\substack{n}}\frac{\overline{\lambda_f(n)}\lambda_{f^*}(n)\lambda_{f^*}(L)}{\sqrt{L}n}I^{+}_L(nL,n,d)\\
&\ll \frac{X}{NM}\sum_{n\ll X}\frac{\left|\lambda_f(n)\right|^2+\left|\lambda_{f^*}(n)\right|^2}{n}(XMN)^{\epsilon}\ll \frac{X}{NM}(XMN)^{\epsilon},
\end{align}
where the last two steps follow from Lemma \ref{lem9} (when $f$ is non-exceptional), \ref{Rank} and the bound $\left|\lambda_f(L)\right| = L^{-1/2}$ \eqref{saving_norm}.

\subsection{The Sum of Shifted Sums.}
\label{section_soss}
Let $m\mp nL = rc$ in \eqref{2.6}. Since $bc = d$ and $M | d$, let $c_0 : = (c,M)$ such that $c=c\' c_0$. Let $h = rc\'$ and $d = d\' RM$. Then, after rewriting \eqref{2.6} 
\begin{align}
\label{R_f bound}
&R_f(X,D) = \\
&\sum_{\substack{S\\ LR=N}}\sum_{\substack{c_0|M}}\frac{c_0}{RM}\sum_{\substack{m,n,h \neq 0\\ m\mp nL = hc_0}}\sum_{\substack{(d\' , L) =1\\c\' | (d\' R,h)}}\frac{c\' \mu(\frac{MRd\'}{c_0c\'})}{d\'}\frac{\overline{\lambda_f(n)}\lambda_{f^*}(m)}{\sqrt{nm}}I^{\pm}_{L,X,D}(m,n,d\'RM)h_0\(\frac{m}{S}\) + O\(\frac{X^{1+\epsilon}}{(MN)^{1-\epsilon}}\).
\end{align}

We can define $b(h,d\') : = \sum_{\substack{(d\' , L) =1\\c\' | (d\' R,h)}}c\' \mu(\frac{MRd\'}{c_0c\'})d^{\prime-1}$. Therefore, it is natural to study the sum of shifted sums. We have the following proposition.
\begin{pro}
\label{sum_of_shifted_sums}
Let $f, g$ be newforms with the same level $N$. Let $I(x,y,d)$ be a smooth function supported on $[1/2S_1, 5/2S_1]\times [1/2S_2, 5/2S_2] \times [1/2D_1, 5/2D_1]$ with $(x,y,d)$-type $(1: Z_1,Z_2, Z_3)$ (see Section \ref{weight_functions}). Set $Z=Z_1+Z_2+Z_3+1$. Let $b(h,d)$ be a series of complex numbers. Let $c_0$ be an integer coprime with $N$. Then 
\begin{align}
&\sum_{d}\sum_{\substack{m,n,h\neq 0\\ l_1m \mp l_2n = hc_0}}b(h,d)\frac{\overline{\lambda_f(n)}\lambda_{g}(m)}{\sqrt{nm}}I(m,n,d) \ll_{\epsilon, t_f, t_g} \max_{H} \left\{c_0^{\theta - \frac{1}{2}}N^{\frac{5}{6}}\sqrt{l_1l_2}\frac{\|B(h)\|_H}{\sqrt{H}}\(S_{1,2}+\frac{H^{1/2}}{\sqrt{Nl_1l_2}}\)\right\}Z^{11}Q^{\epsilon}
\end{align}
up to a factor of size $(ZS_1S_2l_1l_2Nc_0)^{\epsilon}$, where $S_{1,2} = \sqrt{\(S_1l_1+S_2l_2\)\(\frac{1}{S_1l_1}+\frac{1}{S_2l_2}\)}$, $H$ is any number in the range of $h$ ($Hc_0 \ll S_1l_1+S_2l_2$ holds automatically) and 
$$B(h) := \sum_{d}|b(h,d)|,\, \ \|B(h)\|^2_H : = \sum_{h\sim H} |B(h)|^2.$$
\end{pro}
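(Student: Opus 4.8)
The plan is to open up the shifted sum into additive characters, convert it into a sum of Kloosterman sums by applying the Voronoi formula twice (once to the $m$-sum, once to the $n$-sum), and then feed the resulting expression into the large sieve inequality of Theorem \ref{pro1}. First I would detect the linear relation $l_1 m \mp l_2 n = h c_0$ by Jutila's circle method (Lemma \ref{thm2}), introducing a modulus $c \sim C$ with $C$ a free parameter which I will ultimately choose large enough that the error term from the circle method is negligible; as in the sketch in Section \ref{ses}, I may also assume $(c, N) = 1$ and even $(c, Nl_1l_2c_0)=1$ to simplify. This replaces the congruence by $\frac{1}{cC}\sideset{}{^*}\sum_{a(c)} e\!\left(\frac{a(l_1m \mp l_2 n - hc_0)}{c}\right)$, after which the $m$- and $n$-sums separate into twisted exponential sums of the form $\sum_m \frac{\lambda_g(m)}{\sqrt m} e\!\left(\frac{\overline{l_1}a\, m}{c}\right)(\text{smooth})$, and similarly for $n$.

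Next I would apply the Voronoi summation formula (Lemma \ref{lemV}) to each of these two sums. Since $g$ (resp.\ $f$) has level $N$ and $(c,N)=1$, we get $N_2 = N$, so each application produces a dual sum over a new variable with a Kloosterman-type phase $e\!\left(\mp \tilde m\, \overline{a l_1 N}/c\right)$ against a Bessel transform, and the lengths of the dual variables become roughly $C^2/(S_1 l_1)$ and $C^2/(S_2 l_2)$ up to $N$-factors. Combining the two dual phases with the remaining $a$-sum over $c$, the $\sideset{}{^*}\sum_{a(c)}$ collapses (via $\sum_a e(a\cdot/c)$) into a single Kloosterman sum $S(\cdot\,\overline{r}, \pm h w; sc)$ with $r$ a divisor built from $l_1 l_2 N$, $s=1$, and $w = c_0$ (the crucial feature being that $c_0$ plays the role of the large modulus $w$ coprime to $rs$). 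At this stage the sum has exactly the shape of $\mathfrak{S}_\pm$ in Theorem \ref{pro1}, with $a(v)$ coming from the $\lambda_g$ coefficients and $b(h,d)$ carried along (the $d$-variable is inert, which is why Theorem \ref{pro1} is stated with a redundant $d$-sum), and the weight function $u(v,h,q,d)$ assembled from the product of the two Bessel transforms times the circle-method cutoff — one checks it has the required derivative bounds using Lemmas \ref{lem7}, \ref{lem7_1}, \ref{lem8} and the weight-function calculus of Section \ref{weight_functions}, which is where the $Z^{11}$ and the $(1+|t|)$-powers get produced.

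The remaining work is bookkeeping: I would apply Theorem \ref{pro1} with $V \asymp C^2/(S_1 l_1)N^{O(1)}$, $H$ the dyadic scale of $h$, $Q\asymp C$, $w = c_0$, $r \asymp l_1 l_2 N$, $s = 1$, $\theta$ the Kim--Sarnak exponent, using $\|a(v)\|_2^2 \ll S_1^{\epsilon}$ by Rankin's bound (Lemma \ref{Rank}) and keeping $\|B(h)\|_H$ as given. One then observes that $\Xi = \sqrt{V H w}/(s\sqrt r Q) \asymp \sqrt{H c_0/(N l_1 l_2)}$ is independent of $C$, so the parameter $C$ drops out of the final bound entirely, leaving $C$ free to be taken large; the factors $\sqrt{V/(rs)}$ and $\sqrt{H/(rs)}$ in Theorem \ref{pro1} combine with $\Xi$ to produce the claimed $S_{1,2} + H^{1/2}/\sqrt{N l_1 l_2}$, and the $w^\theta = c_0^\theta$ combined with the $s\sqrt r \asymp \sqrt{l_1 l_2 N}$ prefactor and the $N^{1/3}$-type loss from the sup-norm-free part of the Bessel analysis yields $c_0^{\theta - 1/2} N^{5/6} \sqrt{l_1 l_2}$. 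The main obstacle, and the step I expect to be most delicate, is the second Voronoi application together with the precise tracking of which modulus ends up as the ``$w$'' in Theorem \ref{pro1}: one must arrange the two Voronoi steps so that the new Kloosterman modulus is $sc$ (not $c_0 c$), the shift is $\pm h c_0$ (so that $c_0$ is genuinely the coprime parameter $w$), and the argument $v\overline r$ carries all the $l_1 l_2 N$ dependence — and crucially one must verify the coprimality hypotheses $(w, rs) = 1$ and $(q,r)=1$ of Theorem \ref{pro1} survive, which is exactly why $(c_0, N) = 1$ was assumed and why one arranges $(c, N l_1 l_2 c_0) = 1$ in the circle method.
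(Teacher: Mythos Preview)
Your overall architecture matches the paper exactly: Jutila's circle method with $Q$ enormous and $(q,l_1l_2N)=1$, then Voronoi on both the $m$- and $n$-sums, then Theorem \ref{pro1} with $r=l_1l_2N$, $s=1$, $w=c_0$. Where you go wrong is in identifying the sequence $a(v)$ and bounding its $\ell^2$-norm.

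After the two Voronoi steps the $a$-sum over $q$ collapses to a Kloosterman sum whose first argument is not a single dual variable but the linear combination $(\mp_1 l_2\tilde m\pm_2 l_1\tilde n)\overline{l_1l_2N}$ of \emph{both} dual variables. Hence the variable $v$ in Theorem \ref{pro1} is $v=l_2\tilde m-l_1\tilde n$, and (after an Abel summation to separate the smooth weight) the coefficient is the shifted convolution
\[
A(v;x)=\sum_{\substack{\tilde m\leqslant x\\ l_2\tilde m-l_1\tilde n=v}}\frac{\overline{\lambda_{f^*}(\tilde n)}\lambda_{g^*}(\tilde m)}{\sqrt{\tilde n\tilde m}}.
\]
Your claim that $\|a(v)\|_2^2\ll S_1^{\epsilon}$ by Rankin is therefore not available: Rankin bounds $\sum|\lambda(n)|^2/n$, not the $\ell^2$-norm of a shifted convolution. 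In the paper this is handled (equation \eqref{Ak}) by writing $\|A(v;x)\|^2$ as an integral over $\alpha\in[0,1]$ of a product of four exponential sums $S(f,x,\alpha)$, pulling out two of them in sup-norm via Wilton's bound (Lemma \ref{Wilton}), and integrating the remaining two by Parseval and Rankin. This is precisely where the sup-norm bound $\|f\|_\infty\ll N^{1/3}$ enters and produces $\|A(v;x)\|^2\ll N^{2/3+\epsilon}$; combined with the prefactor $s\sqrt{r}\asymp\sqrt{Nl_1l_2}$ from Theorem \ref{pro1} this gives the $N^{5/6}$ in the statement. Your attribution of the $N^{1/3}$ to ``the sup-norm-free part of the Bessel analysis'' is backwards: it is exactly the sup-norm that is responsible, and without it the argument only recovers convexity (cf.\ the remark after \eqref{role_of_supnorm}).
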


We will prove this proposition in the following few steps.

\subsection{\textbf{Step 5. Applying the Circle Method. }}
By the support of $I(x,y,d)$, we have $|h|<3(l_1S_1+l_2S_2)/c_0$. Now, we shall apply Jutila's circle method to detect the relation $l_1m \mp l_2n = hc_0$. Let $c_0 \ll M$ for some $M$ (We will choose $M$ to be the level eventually in the case that $c_0$ is small).

As the notations in Section \ref{se1.5}, we choose $\delta = Q^{-1}$,
$Q = (|l_1l_2|S_1S_2D_1MN)^{100}$, and\\
$\mathfrak{Q} = \left\{q : Q < q < 2Q, (q,l_1l_2N) = 1\right\}$. So that $\Lambda \gg Q^{2 - \epsilon}$. Thus, by Jutila's circle method, the inner sum in \eqref{2.6} gives

\begin{align}
&\sum_{n,m}\sum_{0<|h|<3(l_1S_1+l_2S_2)/c_0}b(h,d)\frac{\overline{\lambda_f(n)}\lambda_{g}(m)}{\sqrt{nm}}I(m,n,d)\int_0^1e((l_1m\mp l_2n-hc_0)x)dx\\
= &\frac{1}{\Lambda}\sum_{q \in \mathfrak{P}}\sideset{}{^*}\sum_{a(q)}\sum_{n,m}\sum_{0<|h|<3(l_1S_1+l_2S_2)/c_0} b(h,d)e\(\frac{(l_1m\mp l_2n-hc_0)a}{q}\)\frac{\overline{\lambda_f(n)}\lambda_{g}(m)}{\sqrt{nm}}I(m,n,d)\\
& \times \frac{1}{2\delta}\int_{-\delta}^{\delta}e(\Delta x)dx + \mathcal{E}(l_1,l_2,d,c_0)\label{Main1}
\end{align}
where 
$\Delta = \Delta(l_1m,l_2n,hc_0) = l_1m\mp l_2n-hc_0$, and
\begin{equation}
\begin{split}
\mathcal{E}(l_1,l_2,d,c_0) =& \sum_{n,m,h}b(h,d)\frac{\overline{\lambda_f(n)}\lambda_{g}(m)}{\sqrt{nm}}I(m,n,d)\int_0^1(1-\tilde{I}_{\mathfrak{P},\delta}(x))e((l_1m\mp l_2n-hc_0)x)dx\\
\leq & \sum_{n,m,h}\left|b(h,d)\frac{\overline{\lambda_f(n)}\lambda_{g}(m)}{\sqrt{nm}}I(m,n,d)\right|\int_0^1|1-\tilde{I}_{\mathfrak{P},\delta}(x)|^2dx\\
\leq & \sum_{n,m,h}\left|b(h,d)\frac{\overline{\lambda_f(n)}\lambda_{g}(m)}{\sqrt{nm}}I(m,n,d)\right|\frac{Q^{2+\epsilon}}{\delta \Lambda^2}\leq \|b(h,d)\|_2(l_1l_2S_1S_2D_1MN)^{-50}.
\end{split}
\end{equation}
The last inequality above follows from Lemma \ref{lem9} and \ref{Rank}. Here we used a similar argument to the one in \cite{B1}.
 
Let 
\begin{equation}\label{weight w}
w_{\delta}\(\Delta\) : = \frac{1}{2\delta}\int_{-\delta}^{\delta}e(\Delta x)dx.
\end{equation}
Set
\begin{align}\label{RL}&R_{f,g}: =  \\
& \frac{1}{\Lambda}\sum_{q \in \mathfrak{P}}\sideset{}{^*}\sum_{a(q)}\sum_{n,m,d}\sum_{0<|h|<3(l_1S_1+l_2S_2)/c_0} b(h,d)e\(\frac{(l_1m\mp l_2n-hc_0)a}{q}\)\frac{\overline{\lambda_f(n)}\lambda_{g}(m)}{\sqrt{nm}}I(m,n,d)w_{\delta}\(\Delta\).
\end{align}

\subsection{\textbf{Step 6. Applying Vornoi formula twice to regenerate Kloostermann sums. }}
We only treat the $-$ sign case, the $+$ sign case can be treated similarly. Recall that $(q,l_1l_2N) =1$ for any $q \in \mathfrak{Q}$. So that $N_2 = N/(N,q) = N$. Also, we have that $\Lambda \gg Q^{2-\epsilon}$. 

Applying Voronoi formula (Lemma \ref{lemV}) twice to both $m,n$-sum in \eqref{RL}, we get 
\begin{align}
&R_{f,g}
=\frac{1}{\Lambda}\sum_{\pm_1,\pm_2}\sum_{q \in \mathfrak{Q}}\sum_{n,m,d}\sum_{h}S((\mp_1 l_2m \pm_2 l_1n)\overline{l_1l_2N}, hc_0; q) \frac{\overline{\lambda_{f^*}(n)}\lambda_{g^*}(m)}{\sqrt{nm}}H^{\pm_1, \pm_2}(m,n,h,d,q), \label{RfL}
\end{align}
where $H^{\pm_1, \pm_2}(m,n,h,d,q)$ is given by
\begin{align}\label{H_L}
&H^{\pm_1, \pm_2}(m,n,h,d,q) :=\\
&\iint_{0}^{\infty}4 \eta_g^{\pm_1}\eta_f^{\pm_2} I\(\frac{\xi^2q^2N}{m},\frac{\mu^2q^2N}{n},d\)w_{\delta}\(\Delta\(\frac{\xi^2q^2N}{m},\frac{\mu^2q^2N}{n},hc_0\)\)\mathcal{J}_g^{\pm_1}(\xi)\mathcal{J}_f^{\pm_2}(\mu)d\xi d\mu.
\end{align}
and $\eta_f^{\pm_2}, \eta_g^{\pm_1}$ depend on $f,g,N$ only.

Recall that $\delta = Q^{-1}$,
$Q = (|l_1l_2|S_1S_2D_1MN)^{100}$.
By Lemma \ref{lem10}, we can truncate the sum over $m,n$ such that
\begin{align*}
m  \ll \frac{q^2N(1+Z_1(S_1))}{S_1}(Q)^{\epsilon}, n \ll \frac{q^2N(1+Z_2(S_2))}{S_2}(Q)^{\epsilon}.
\end{align*}

Breaking apart the $m,n$-sum dyadically, we can assume that the sizes of $m,n$ are $A,B$ respectively with $A  \ll \frac{q^2N(1+Z_1)}{S_1}(Q)^{\epsilon}$ and $B \ll \frac{q^2N(1+Z_2)}{S_2}(Q)^{\epsilon}.$

Let
\begin{align}
&\widetilde{R}(A,B) : =\frac{1}{\Lambda}\sum_{\pm_1,\pm_2}\sum_{q \in \mathfrak{Q}}\sum_{n,m,d}\sum_{h}b(h,d)S((\mp_1 l_2m \pm_2 l_1n)\overline{l_1l_2N}, hc_0; q) \frac{\overline{\lambda_{f^*}(n)}\lambda_{f}(m)}{\sqrt{nm}}\widetilde{H}_{A,B}^{\pm_1, \pm_2}(m,n,h,d,q), \label{RfL}
\end{align}
where 
\begin{equation}
\widetilde{H}_{A,B}^{\pm_1, \pm_2}(m,n,h,d,q)=H^{\pm_1, \pm_2}(m,n,h,d,q)h_0\(\frac{m}{A}\)h_0\(\frac{n}{B}\).
\end{equation}
Then, we have
\begin{equation}
\label{R'}
R_{f,g} \ll  \max_{A,B}\{\widetilde{R}(A,B)\}((1+Z_1(S_1))(1+Z_2(S_2))Q)^{\epsilon}.
\end{equation}

The $\sum_{\pm_1, \pm_2}$ contains four terms. We only consider both $+$ case, and the proofs of other cases will be similar. Let $l_2m - l_1n =v$. Set $h^{\pm}(v)$ be functions such that $h^{\pm}(v)=1$ when $\pm v\geqslant 2/3$ and $h^{\pm}(v)=0$ when $\pm v \leqslant 1/3$. By Abel's summation formula, we have
\begin{align}
\widetilde{R}^{+,+}(A,B)
= &\frac{1}{\Lambda}\sum_{d}\sum_{q\in\mathfrak{Q}}\sum_{h}b(h,d)S(0, hc_0; q) \sum_{l_2m=l_1n }\frac{\overline{\lambda_{f^*}(n)}\lambda_{g^*}(m)}{\sqrt{nm}}\widetilde{H}^{+,+}_{A,B}(m,n,h,d,q)\\
& -\frac{1}{\Lambda}\sum_{d}\sum_{q\in\mathfrak{Q}}\sum_{h}b(h,d) \int_{\frac{1}{2}}^{\infty}\sum_{v>0}S(v\overline{l_1l_2N}, hc_0; q) \sum_{m \leqslant x, l_2m-l_1n = v}\frac{\overline{\lambda_{f^*}(n)}\lambda_{g^*}(m)}{\sqrt{nm}}u^+(v,h,q; d,x) dx\\
& - \frac{1}{\Lambda}\sum_{d}\sum_{q\in\mathfrak{Q}}\sum_{h}b(h,d) \int_{\frac{1}{2}}^{\infty}\sum_{v<0}S(v\overline{l_1l_2N}, hc_0; q) \sum_{m \leqslant x, l_2m - l_1n =v}\frac{\overline{\lambda_{f^*}(n)}\lambda_{g^*}(m)}{\sqrt{nm}}u^-(v,h,q;d,x) dx\\
= &\, \ \mathfrak{I}_0 - \mathfrak{I}_+ - \mathfrak{I}_-,\label{2.8}
\end{align}
where $u^-(v,h,q; d,x) : = \frac{d}{d x}\widetilde{H}^{+,+}_{A,B}(x,\frac{l_1x-v}{l_2},h,d,q)h^{-}(v)$ and $u^+(v,h,q;d,x) : = \frac{d}{d x}\widetilde{H}^{+,+}_{A,B}(\frac{v+l_1x}{l_2},x,rc,d,q)h^{+}(v)$.

Let $$A(v;x) : = \sum_{\substack{m \leqslant x\\ l_1m - l_2n =v}}\frac{\overline{\lambda_{f^*}(n)}\lambda_{f}(m)}{\sqrt{nm}}.$$
\begin{rem}
In Theorem \ref{pro1}, $w$ need to be coprime with the level $rs$. In our case, $c_0$ is coprime with the level $Nl_1l_2$ that we will consider. So, $c_0$ will play the role as $w$, which gives us the major saving. 
\end{rem}


\subsection{\textbf{Step 7. Applying the large sieve type inequality to the sum of Kloostermann sums. } }\label{step6}
In this section, we will bound $\mathfrak{I}_0$ and $\mathfrak{I}_{\pm}$.


First, consider $\mathfrak{I}_0$ as defined in \eqref{2.8}. By \eqref{Rama}, Lemma \ref{lem10}, the Cauchy-Schwarz inequality and the Rankin's bound (Lemma \ref{Rank}), we have
\begin{align}
\mathfrak{I}_0 & =\frac{1}{\Lambda}\sum_{d}\sum_{q\in\mathfrak{Q}}\sum_{h}b(h,d)\sum_{u|(hc_0,q)} \mu\(\frac{q}{u}\)u \sum_{l_1m=l_2n }\frac{\overline{\lambda_{f^*}(n)}\lambda_{g^*}(m)}{\sqrt{nm}}\widetilde{H}_{A,B}(m,n,h,d,q) \label{2.10}\\
& \ll_{\epsilon} \frac{1}{Q^{1-\epsilon}}\sum_{h,d}\tau(hc_0)b(h,d)\sum_{n \ll l_1A+l_2B}\frac{|\lambda_f(n)|^2}{n}\notag \ll_{\epsilon} \|B(h)\|_2Q^{-1/2} 
\end{align}

Secondly, consider $\mathfrak{I}_+$. We use the notation in Lemma \ref{lem11}, which denotes the $(v, h, q, d)$-type of $u^+(v,h,q; d ,x)$ as
$$\(B_{u^{+}} : Z_1+1, 1, Z_1+Z_2+1, Z_3+1\),$$
and $20Al_2 > Bl_1$ when $u^{+}$ is nonzero. 


Break apart the $v,h$-sum dyadically such that $v \sim V$, $h \sim H$. Let $\Xi=\frac{\sqrt{VHc_0}}{\sqrt{Nl_1l_2}Q}$. Then, apply Theorem \ref{pro1} to obtain that
\begin{align}
\label{I1}
 \mathfrak{I}_+ &= \frac{1}{\Lambda}\sum_{d}\sum_{q\in\mathfrak{Q}}\sum_{h}b(h,d) \int_{\frac{1}{2}}^{\infty}\sum_{v>0}S(v\overline{l_1l_2N}, hc_0; q) \sum_{\substack{m \leqslant x\\ l_2m-l_1n = v}}\frac{\overline{\lambda_{f^*}(n)}\lambda_{g^*}(m)}{\sqrt{nm}}u^+(v,h,q; d,x) dx\\
&\ll  \max_{V,H}\{\frac{\sqrt{Nl_1l_2}}{\(Z+\Xi\)Q}B_{u^+}B\left(1+\left(\Xi\right)^{-2\theta}\right) \left(Z+\Xi+ \frac{V^{1/2+\varepsilon}}{\sqrt{Nl_1l_2}}\right)\\
&\times\left(Z+\Xi+ \frac{H^{1/2+\varepsilon}}{\sqrt{Nl_1l_2}}\right)c_0^{\theta}\max_{x}\{||A(v,x)||\}\|B(h)\|_H\}Z^8Q^{\epsilon}
\end{align}
where $B(h) = \sum_{d}|b(h,d)|$, and 
$$\|B(h)\|^2_H = \sum_{h \sim H}A^2(h).$$

Since $V$ is the size of $v = ml_2 - nl_1$ and $u^{+}$ is nonzero only if $v>0$ , we have that $V \ll Al_2 $. Furthermore, we have
\begin{align}
&||A(v,x)||^2 =  \sum_{\substack{m_1, m_2 \leqslant x\\ l_2m_1-l_1n_1 = l_2m_2-l_1n_2}}\frac{\overline{\lambda_{f^*}(n_1)}\lambda_{f}(m_1)\lambda_{f^*}(n_2)\overline{\lambda_{f}(m_2)}}{\sqrt{n_1m_1n_2m_2}}\label{Ak}\\
&=  \int_{0}^{1}\sum_{m_1,m_2 \leqslant x }\frac{\lambda_{f}(m_1)e(m_1l_2\alpha)\overline{\lambda_{f}(m_2)}e(-m_2l_2\alpha)}{\sqrt{m_1m_2}}\\
&\times \sum_{n_1,n_2 \leqslant l_2x/l_1}\frac{\overline{\lambda_{f^*}(n_1)}e(-n_1l_1\alpha)\lambda_{f^*}(n_2)e(n_2l_1\alpha)}{\sqrt{n_1n_2}} d \alpha,
\end{align}
which, by Lemma \ref{Wilton}, is
\begin{align}
&\ll \|S(f,x,l_2\alpha)\|_{\infty}^2\int_{0}^{1}\left|\sum_{n \leqslant l_2x/l_1}\frac{\lambda_{f^*}(n)e(-nl_1\alpha)}{\sqrt{n}}\right|^2 d \alpha\\
&\ll_{\epsilon} N^{2/3}(Nx)^{\epsilon}\sum_{n \leqslant l_2x/l_1}\frac{|\lambda_{f^*}(n)|^2}{n} \ll_{\epsilon} N^{2/3}(xNl_1l_2)^{\epsilon}.
\end{align}


By Lemma \ref{lem11}, \eqref{I1}, 
 and \eqref{Ak}, recalling that $A \ll (1+Z_1)Q^{2+\epsilon}N/S_1$and $B \ll (1+Z_2)Q^{2+\epsilon}N/S_2$, 
$\mathfrak{I}_+$ is bounded by
 \begin{align}
 &\max_{\substack{V,H}}\left\{\frac{c_0^{\theta}N^{5/6}Z^{9}\sqrt{l_1l_2}}{\(Z+\Xi\)Q}\left(1+\left(\Xi\right)^{-2\theta}\right) \left(Z+\Xi+ \frac{V^{1/2+\varepsilon}}{\sqrt{Nl_1l_2}}\right)\left(Z+\Xi+ \frac{H^{1/2+\varepsilon}}{\sqrt{Nl_1l_2}}\right)\|B(h)\|_H\right\}Q^{\epsilon}.
 \end{align}
When $VHc_0 \leqslant Nl_1l_2Q^2$, recalling that $H \ll (S_1l_1+S_2l_2)/c_0$ and $Q$ is sufficiently large , we have
\begin{align}
\mathfrak{I}_+  
\ll_{\epsilon} &\max_{\substack{V,H}}\left\{\frac{c_0^{\theta}N^{5/6}Z^{8}\sqrt{l_1l_2}}{Q}\(\frac{Nl_1l_2Q^2}{VHc_0}\)^{\theta}\left(Z+ \frac{V^{1/2+\varepsilon}}{\sqrt{Nl_1l_2}}\right)\left(Z+ \frac{H^{1/2+\varepsilon}}{\sqrt{Nl_1l_2}}\right)\right\}(Q)^{\varepsilon}\\
\ll_{\epsilon} &\max_{\substack{H\ll (S_1l_1+S_2l_2)/c_0}}\left\{\frac{c_0^{\theta-\frac{1}{2}}N^{5/6}\sqrt{l_1l_2}\|B(h)\|_H}{\sqrt{H}}\left(1+ \frac{H^{1/2+\varepsilon}}{\sqrt{Nl_1l_2}}\right)\right\}Z^{8}Q^{\epsilon}.
\end{align}
When $VHc_0 > NLQ^2$, recalling that $V \ll Al_2 \ll ZQ^{2+\epsilon}Nl_1l_2\((S_1l_1)^{-1}+(S_2l_2)^{-1}\)$, we have
\begin{align}
\mathfrak{I}_+  
\ll_{\epsilon} &\max_{\substack{H}}\left\{\frac{c_0^{\theta-\frac{1}{2}}N^{5/6}\sqrt{l_1l_2}\|B(h)\|_H}{\sqrt{H}}\left(S_{1,2}+ \frac{H^{1/2+\varepsilon}}{\sqrt{Nl_1l_2}}\right)\right\}Z^{11}Q^{\epsilon},
\end{align}
where $S_{1,2} = \sqrt{\(S_1l_1+S_2l_2\)\(\frac{1}{S_1l_1}+\frac{1}{S_2l_2}\)}$.

The estimation of $\mathfrak{I}_-$ is similar. Thus, we combine these bounds with \eqref{R'} and \eqref{2.8} to compete the proof of Proposition \ref{sum_of_shifted_sums}.

\subsection{Conclusion and the Final bound. }
By the results of Section \ref{section_soss}, we can apply Propostion \ref{sum_of_shifted_sums} to $R_f(X,D)$ with 
$I(x,y,d\') = I^{\pm}_{L,X,D}(m,n,d\'RM)h_0\(\frac{m}{S}\)$ and $b(h,d\') : = \sum_{\substack{(d\' , L) =1\\c\' | (d\' R,h)}}c\' \mu(\frac{MRd\'}{c_0c\'})d^{\prime-1}$. For a fixed $c_0$,
\begin{align}
\|B(h)\|_H = \sum_h \left|\sum_{\substack{ d\'\sim D/RM\\(d\',L)=1}}|b_{c_0}(h,d\'R)|\right|^2 \leq \sum_h \left|\sum_{(d\',L)=1}\sum_{c\'|(d\'R,h)}\frac{c\'}{d\'R}\right|^2  
\ll  \sum_{h}\left|\sum_{s|h}\sum_{d\'R\equiv 0 (s)}\frac{s}{d\'R}\right|^2
\ll  H(HDR)^{\epsilon}\label{Bh}.
\end{align}

In our case, we have that $S_1 =S, S_2=X, D_1=D/RM, l_1=1, l_2=L$. Moreover, recall that $S\ll M^{\epsilon}\lambda LD$. By Propostion \ref{sum_of_shifted_sums} and \eqref{R_f bound}, we have
$$R_f(X,D) \ll_{\epsilon,t_f,\kappa} \(\frac{X}{MN}+\frac{\lambda^{12}N^{4/3}}{M^{1/2-\theta}}\(1+\sqrt{\frac{X}{MN}}\)\)(\lambda XMN)^{\epsilon}.$$

\begin{rem}
We used that $f$ is non-exceptional here. Otherwise, we would have a large loss when $S$ is small. 
\end{rem}


Using the result of Kim-Sarnak in \cite{K}, we take $\theta = \frac{7}{64}$ and choose $\beta = \frac{11}{4875}$. Then, from \eqref{2.3} and $\lambda < M^{2\beta}\(1+Z_h\)^2$, we have
\begin{align}
\sum_{g \in B_{\kappa}(M)} \omega_g^{-1}\left| \sum_{n \geqslant 1}\frac{\lambda_f(n)\lambda_g(n)}{\sqrt{n}}h\(\frac{n}{X}\)\right|^2 &\ll_{\epsilon,t_f,\kappa} \(1+\frac{X}{MN}+\frac{X}{M^{1+\beta}}+\frac{\(1+Z_h\)^{24}N^{4/3}}{M^{1/3+\beta}}\(1+\sqrt{\frac{X}{MN}}\)\)(XMN)^{\epsilon}.
\end{align}
We finish the proof of Proposition \ref{thm1}.

\begin{rem}\label{key remark}
The choice of $\beta$ is not optimal here when $X\sim MN$, $N<M$. In this case, one can choose $25\beta = \frac{1}{2}-\theta-\frac{\log_MN}{3}$. As a result, the bound would be  $1+ N^{76/75}M^{-1/64}$. 
\end{rem}

\begin{rem}
In this proof, $M$ is not necessarily square-free. It is also possible to show that when $(M,N)$ is very small, a subconvexity bound still holds. 
\end{rem}

\section{The Estimation of Weight Functions}
\label{weight_functions}

In this section, we will use the lemmas in section \ref{se1.2} to study various weight functions appearing in our analysis in section \ref{se2} such as $I_{L,X,D}(x,y,d)$, $H_{L}(m,n,h,d,q)$ and $u^{\pm}(v,h,q;d,x)$.\\

In order to simplify our notation, we introduce the following definition for the "type" of a function.
\begin{de}
Let $\mathbf{x}=(x_1,x_2,\dots,x_n)$ be a vector of real numbers with each $x_i \neq 0$. Let $F(\mathbf{x})$ be a function. If there are nonnegative functions $Z_F(\mathbf{x}), F_1(\mathbf{x}),F_2(\mathbf{x}),\dots ,F_n(\mathbf{x})$ such that  $$|x_1^{i_1}\dots x_n^{i_n}\partial_{x_1}^{i_1}\dots \partial_{x_n}^{i_n}F(\mathbf{x})|\ll_{i_1\dots i_n} Z_F(\mathbf{x})F_1(\mathbf{x})^{i_1}\dots F_n(\mathbf{x})^{i_n}$$ for every $\mathbf{x},$ where the implied constant depends on $i_1,\dots,i_n$ only, then we call $F(\mathbf{x})$ has $\mathbf{x}=(x_1,x_2,\dots,x_n)$-type $$(Z_F(\mathbf{x}): F_1(\mathbf{x}),F_2(\mathbf{x}),\dots ,F_n(\mathbf{x})).$$

Moreover, when $F(\mathbf{x})$ does not depend on $x_l$ for some $l$, we let $F_l(\mathbf{x})=0$.
\end{de}

For example, let $F(x,y):=e(x)$. Since $x^j\partial_x^{j}F(x,y) = (2\pi i)^{j}x^je(x)$ and $F(x,y)$ is independent of $y$, we have that $F(x,y)$ has $(x,y)$-type $(1: |x|, 0)$.\\

First of all, we establish some basic properties about types. These properties will be used in the study of our weight functions. 

Let $F$ and $G$ be $\mathbb{R}^m-\text{to}-\mathbb{R}$ functions with types $(Z_F:F_1,\dots,F_m)$ and $(Z_G:G_1,\dots,G_m)$ with each $F_i, G_i$ nonnegative. Let $\mathbf{ x} = (x_1,\dots,x_m)$ with each $x_i \neq 0$.
\begin{lem} \label{lem1}
$\partial_{x_k}F(\mathbf{ x})$  has $\mathbf{ x}$-type $(Z_FF_k/x_k:F_1,\dots,F_m)$.
\end{lem}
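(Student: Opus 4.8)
The statement is a purely formal differentiation estimate about the "type" notion just introduced, so the plan is to unravel the definition, differentiate, and re-apply the definition. Let $F(\mathbf{x})$ have $\mathbf{x}$-type $(Z_F:F_1,\dots,F_m)$, meaning $|x_1^{i_1}\cdots x_m^{i_m}\partial_{x_1}^{i_1}\cdots\partial_{x_m}^{i_m}F(\mathbf{x})|\ll_{i_1,\dots,i_m} Z_F F_1^{i_1}\cdots F_m^{i_m}$ for every multi-index $(i_1,\dots,i_m)$. We want to show $G(\mathbf{x}):=\partial_{x_k}F(\mathbf{x})$ satisfies $|x_1^{i_1}\cdots x_m^{i_m}\partial_{x_1}^{i_1}\cdots\partial_{x_m}^{i_m}G(\mathbf{x})|\ll (Z_FF_k/x_k) F_1^{i_1}\cdots F_m^{i_m}$.

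The key computational observation is that for any multi-index $(i_1,\dots,i_m)$,
\[
x_1^{i_1}\cdots x_m^{i_m}\,\partial_{x_1}^{i_1}\cdots\partial_{x_m}^{i_m}\,\partial_{x_k}F
= \frac{1}{x_k}\Big(x_1^{i_1}\cdots x_k^{i_k+1}\cdots x_m^{i_m}\,\partial_{x_1}^{i_1}\cdots\partial_{x_k}^{i_k+1}\cdots\partial_{x_m}^{i_m}F\Big).
\]
This is immediate from commuting the partials (valid since $F$ is smooth — implicit in the hypothesis that all mixed derivatives exist and are controlled) and inserting $x_k\cdot x_k^{-1}$. Then I would bound the parenthesised quantity using the type hypothesis for $F$ with multi-index $(i_1,\dots,i_k+1,\dots,i_m)$, obtaining the bound $Z_F F_1^{i_1}\cdots F_k^{i_k+1}\cdots F_m^{i_m}$, up to a constant depending only on the indices. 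Dividing by $|x_k|$ gives $\frac{Z_F F_k}{|x_k|}F_1^{i_1}\cdots F_m^{i_m}$, which is exactly the claimed $\mathbf{x}$-type $(Z_FF_k/x_k:F_1,\dots,F_m)$ (the sign/absolute-value on $x_k$ being harmless, as the type definition is stated with an implied constant). One should also note the degenerate case: if $F$ does not depend on $x_k$, then $F_k=0$ and $G\equiv 0$, and the convention $F_l(\mathbf{x})=0$ when $F$ is independent of $x_l$ makes the statement hold vacuously; similarly $G=\partial_{x_k}F$ is automatically independent of any $x_l$ on which $F$ does not depend, consistent with keeping $F_l$ in the output tuple.

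There is essentially no obstacle here — the only thing to be slightly careful about is the bookkeeping of which multi-index of $F$ gets used (namely $i_k$ is raised by one), and making sure the implied constants still depend only on the indices $i_1,\dots,i_m$, which they do since $i_k+1$ is determined by $i_k$. So the "main obstacle" is merely notational: writing the one-line identity above cleanly and then citing the hypothesis. I would present the argument in three short sentences (the commutation identity, the application of the hypothesis, the division by $x_k$), plus a remark on the $F_k=0$ degenerate case.
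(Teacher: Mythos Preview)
Your proposal is correct and follows exactly the same approach as the paper's proof: insert a factor of $x_k/x_k$, commute the partial derivatives to raise the $k$-th index by one, and then invoke the type hypothesis for $F$ with multi-index $(i_1,\dots,i_k+1,\dots,i_m)$. The paper's version is even terser (a single displayed line after reducing to $k=1$), and your additional remark on the degenerate case $F_k=0$ is a harmless elaboration.
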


\begin{proof}
Without loss of generality, let $k=1$.
$$x_{1}^{i_1}x_{2}^{i_2}\dots x_{m}^{i_m}\partial_{x_1}^{i_1} \dots \partial_{x_m}^{i_m}\partial_{x_1}F(\mathbf{ x}) = \frac{1}{x_1}x_{1}^{i_1+1}x_{1}^{i_1}\dots x_{m}^{i_m}\partial_{x_2}^{i_2} \dots\partial_{x_m}^{i_m}\partial_{x_1}^{i_1+1} F(\mathbf{ x}) \ll \frac{Z(\mathbf{ x})F_{1}(\mathbf{x})}{x_1} F_{1}(\mathbf{ x})^{i_1}\dots F_{m}(\mathbf{ x})^{i_m}$$
 \end{proof}

We now use Lemma \ref{lem1} to establish the type of $F(\mathbf{ x})G(\mathbf{ x})$.

\begin{lem} \label{lem2}
$F(\mathbf{ x})G(\mathbf{ x})$ has  $\mathbf{ x}$-type $(Z_FZ_G:F_1+G_1,\dots,F_m+G_m)$.\\
\end{lem}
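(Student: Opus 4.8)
The plan is to prove the product rule for types by a straightforward application of the Leibniz rule, reducing everything to the single-variable bound already captured in Lemma \ref{lem1}. First I would fix a multi-index $(i_1,\dots,i_m)$ of nonnegative integers and expand the mixed partial derivative $\partial_{x_1}^{i_1}\cdots\partial_{x_m}^{i_m}\bigl(F(\mathbf{x})G(\mathbf{x})\bigr)$ using the general Leibniz formula in each variable separately. Since the variables are independent, this yields a finite sum, over all sub-multi-indices $(j_1,\dots,j_m)$ with $0\le j_k\le i_k$, of terms of the shape $\binom{i_1}{j_1}\cdots\binom{i_m}{j_m}\bigl(\partial_{x_1}^{j_1}\cdots\partial_{x_m}^{j_m}F\bigr)\bigl(\partial_{x_1}^{i_1-j_1}\cdots\partial_{x_m}^{i_m-j_m}G\bigr)$.

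Next I would multiply through by $x_1^{i_1}\cdots x_m^{i_m}$ and distribute the monomial factors between the two derivative blocks, writing $x_k^{i_k}=x_k^{j_k}\cdot x_k^{i_k-j_k}$. For each term this gives a product of $x_1^{j_1}\cdots x_m^{j_m}\partial_{x_1}^{j_1}\cdots\partial_{x_m}^{j_m}F$ and $x_1^{i_1-j_1}\cdots x_m^{i_m-j_m}\partial_{x_1}^{i_1-j_1}\cdots\partial_{x_m}^{i_m-j_m}G$. By the definition of type for $F$ and $G$, the first factor is $\ll_{\mathbf{j}} Z_F(\mathbf{x})\,F_1(\mathbf{x})^{j_1}\cdots F_m(\mathbf{x})^{j_m}$ and the second is $\ll_{\mathbf{i}-\mathbf{j}} Z_G(\mathbf{x})\,G_1(\mathbf{x})^{i_1-j_1}\cdots G_m(\mathbf{x})^{i_m-j_m}$, with the implied constants depending only on the indices, which are bounded once $(i_1,\dots,i_m)$ is fixed. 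Multiplying, each term is $\ll Z_F(\mathbf{x})Z_G(\mathbf{x})\prod_k F_k(\mathbf{x})^{j_k}G_k(\mathbf{x})^{i_k-j_k}$.

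Finally I would bound the sum of these terms. Since all $F_k,G_k$ are nonnegative, for each $k$ we have $\sum_{j_k=0}^{i_k}\binom{i_k}{j_k}F_k(\mathbf{x})^{j_k}G_k(\mathbf{x})^{i_k-j_k}=\bigl(F_k(\mathbf{x})+G_k(\mathbf{x})\bigr)^{i_k}$ by the binomial theorem; summing the product over all $(j_1,\dots,j_m)$ factors as a product of these one-variable sums, yielding $\ll_{\mathbf{i}} Z_F(\mathbf{x})Z_G(\mathbf{x})\prod_{k=1}^m\bigl(F_k(\mathbf{x})+G_k(\mathbf{x})\bigr)^{i_k}$. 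This is exactly the assertion that $FG$ has $\mathbf{x}$-type $(Z_FZ_G:F_1+G_1,\dots,F_m+G_m)$. One small bookkeeping point I would handle explicitly: if $F$ (say) does not depend on $x_l$, its type records $F_l=0$, and indeed $\partial_{x_l}F=0$ kills every term with $j_l\ge 1$, so the sum over $j_l$ collapses to the single term $j_l=0$ and the final exponent of $(F_l+G_l)=G_l$ in variable $x_l$ is still $i_l$ — consistent with the claim. There is no real obstacle here; the only thing to be careful about is that the implied constants genuinely depend only on the order of differentiation, which holds because Lemma \ref{lem1} (equivalently the definition of type) already provides that uniformity, and the Leibniz expansion introduces only finitely many binomial coefficients determined by $(i_1,\dots,i_m)$.
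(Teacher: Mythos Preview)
Your argument is correct. The approach differs from the paper's: rather than expanding with the multivariable Leibniz rule and summing via the binomial theorem, the paper proceeds by induction on the multi-index $(i_1,\dots,i_m)$, peeling off one derivative $\partial_{x_1}$ to write $\partial_{x_1}(FG)=G\,\partial_{x_1}F+F\,\partial_{x_1}G$, then invoking Lemma~\ref{lem1} (which supplies the type of $\partial_{x_1}F$ and $\partial_{x_1}G$) together with the inductive hypothesis for the lower multi-index $(i_1-1,i_2,\dots,i_m)$. Your direct Leibniz-plus-binomial computation is cleaner and avoids induction entirely; the paper's inductive route is a bit more roundabout but has the virtue of exercising Lemma~\ref{lem1} as a reusable building block (the same pattern recurs in the proof of Lemma~\ref{lem3}). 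One minor remark: despite your opening sentence, you never actually invoke Lemma~\ref{lem1} --- your argument rests solely on the definition of type and the Leibniz rule, which is perfectly fine.
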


\begin{proof}
Induction on $(i_1,i_2,\dots, i_m)$, assume that for any $(i_1,i_2,\dots, i_m) < (k_1,k_2,\dots,k_m)$ (e.g. $i_j \leqslant k_j$ for any $j$ and $i_{j_0} < k_{j_0}$ for some $j_0$) we have that
$$x_{1}^{i_1}x_{2}^{i_2}\dots x_{m}^{i_m}\partial_{x_1}^{i_1} \dots \partial_{x_m}^{i_m}FG(\mathbf{ x}) \ll  Z_FZ_G(\mathbf{ x}) (F_{1}+G_{1})(\mathbf{ x})^{i_1}\dots (F_{m}+G_{m})(\mathbf{ x})^{i_m}$$
holds for any $F,G$ satisfying the assumptions as before.

Without loss of generality, assume that $k_1>0$, such that
\begin{align*}
x_{1}^{k_1}x_{2}^{k_2}\dots x_{m}^{k_m}\partial_{x_1}^{k_1} \dots \partial_{x_m}^{k_m} FG(\mathbf{ x}) = & x_{1}^{k_1}x_{2}^{k_2}\dots x_{m}^{k_m}\partial_{1}^{k_1-1} \dots \partial_{m}^{k_m} \left[G(\mathbf{ x}) \partial_{x_1}F(\mathbf{ x})+  F(\mathbf{ x})\partial_{1}G(\mathbf{ x})\right].
\end{align*}
Then, by Lemma \ref{lem1} and induction, we complete the proof.
 \end{proof}

\begin{lem} \label{lem3}
Let $F(\mathbf{ y})$ be a $\mathbb{R}^n$ to $\mathbb{R}$ map, with type $(Z_F: F_1, F_2, \dots ,F_n)$. Let $\mathbf{ G}(\mathbf{ x}) = (G_1(\mathbf{x}), \dots, G_n(\mathbf{x}))$ be a $\mathbb{R}^m$ to $\mathbb{R}^n$ map with each $G_k(\mathbf{x}) \neq 0$ for any $\mathbf{x}$. Moreover, assume that $G_k(\mathbf{x})$ has $\mathbf{x}$-type $(Z_{G_k}: G_{k1},\dots, G_{km})$ with each $G_{kj}(\mathbf{x})$ nonnegative. 

Then $F(\mathbf{ G}(\mathbf{ x}))$ is a  $\mathbb{R}^m$ to $\mathbb{R}$ map, with $\mathbf{x}$-type $(Z_{F(G)}:F(G)_1,\dots,F(G)_m)$, where $Z_{F(G)} = Z_F(\mathbf{ G}(\mathbf{ x}))$ and $$F(G)_j = \sum_{k}\frac{\left[F_k(\mathbf{ G}(\mathbf{ x}))Z_{G_k}(\mathbf{ x})+G_k(\mathbf{ x})\right]G_{kj}(\mathbf{ x})}{G_k(\mathbf{ x})}.$$
\end{lem}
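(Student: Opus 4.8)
The plan is to prove this chain‑rule statement for types by induction on the total order of differentiation $N:=i_1+\dots+i_m$, with the inductive statement quantified over \emph{all} admissible outer functions: for every $N\ge 0$, every $F$ of type $(Z_F:F_1,\dots,F_n)$, every point $\mathbf{x}$ and every multi‑index $\mathbf{i}=(i_1,\dots,i_m)$ with $|\mathbf{i}|=N$,
\[
\big|x_1^{i_1}\cdots x_m^{i_m}\,\partial_{x_1}^{i_1}\cdots\partial_{x_m}^{i_m}\,F(\mathbf{G}(\mathbf{x}))\big|\ll_{\mathbf{i}} Z_F(\mathbf{G}(\mathbf{x}))\prod_{j=1}^m F(G)_j(\mathbf{x})^{i_j},
\]
with $F(G)_j$ as in the statement (all denominators read with absolute value, as in Lemma~\ref{lem1}). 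Quantifying over all $F$ is what lets the induction close, because in the inductive step the hypothesis is applied to the first partial derivatives $\partial_{y_k}F$ rather than to $F$; the key point is that the index functions $F(G)_j$ depend on $F$ only through $F_1,\dots,F_n$ and not through $Z_F$, and by Lemma~\ref{lem1} the function $\partial_{y_k}F$ has type $(Z_FF_k/|y_k|:F_1,\dots,F_n)$ — i.e.\ the \emph{same} index functions — so passing from $F$ to $\partial_{y_k}F$ leaves the output index functions $F(G)_j$ unchanged. The base case $N=0$ is just the order‑zero bound in the type of $F$ evaluated at $\mathbf{y}=\mathbf{G}(\mathbf{x})$.

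For the step $N\to N+1$, given $\mathbf{i}$ with $|\mathbf{i}|=N+1$ I would assume $i_1\ge1$ (the remaining cases are symmetric), put $\mathbf{i}'=\mathbf{i}-(1,0,\dots,0)$, and use the chain rule $\partial_{x_1}F(\mathbf{G}(\mathbf{x}))=\sum_{k=1}^n(\partial_{y_k}F)(\mathbf{G}(\mathbf{x}))\,\partial_{x_1}G_k(\mathbf{x})$, so that $\partial_{\mathbf{x}}^{\mathbf{i}}F(\mathbf{G})=\sum_k\partial_{\mathbf{x}}^{\mathbf{i}'}\big[(\partial_{y_k}F)(\mathbf{G})\cdot\partial_{x_1}G_k\big]$. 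To the first factor I would apply the inductive hypothesis (valid since $|\mathbf{i}'|=N$), having read off the type of $\partial_{y_k}F$ from Lemma~\ref{lem1}; to $\partial_{x_1}G_k$ I would apply Lemma~\ref{lem1} directly, obtaining type $(Z_{G_k}G_{k1}/|x_1|:G_{k1},\dots,G_{km})$; and then Lemma~\ref{lem2} to the product, whose type is then $(\,Z_F(\mathbf{G})F_k(\mathbf{G})Z_{G_k}G_{k1}/(|G_k||x_1|)\,:\,F(G)_1+G_{k1},\dots,F(G)_m+G_{km}\,)$. (Lemmas~\ref{lem1} and~\ref{lem2} are proved by induction on the multi‑index, so they apply verbatim with ``type'' replaced by ``type for all multi‑indices of order $\le N$'', which is all that is used here.) Extracting the $\mathbf{i}'$‑th derivative and multiplying by $x_1^{i_1}\cdots x_m^{i_m}=x_1\cdot x_1^{i'_1}\cdots x_m^{i'_m}$ — the $|x_1|$ cancelling the one in the denominator — then yields
\[
\big|x_1^{i_1}\cdots x_m^{i_m}\,\partial_{\mathbf{x}}^{\mathbf{i}}F(\mathbf{G})\big|\ll_{\mathbf{i}} Z_F(\mathbf{G})\sum_{k=1}^n\frac{F_k(\mathbf{G})Z_{G_k}G_{k1}}{|G_k|}\prod_{j=1}^m\big(F(G)_j+G_{kj}\big)^{i'_j}.
\]

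To finish the step I would bound the right‑hand side by $Z_F(\mathbf{G})\prod_j F(G)_j^{i_j}$, using only that $F(G)_j=\sum_k\frac{(F_k(\mathbf{G})Z_{G_k}+|G_k|)G_{kj}}{|G_k|}$ is a sum of nonnegative terms: retaining the ``$|G_k|$''‑part of the $k$‑th term gives $G_{kj}\le F(G)_j$, hence $F(G)_j+G_{kj}\le2F(G)_j$; retaining the ``$F_kZ_{G_k}$''‑part gives $\frac{F_k(\mathbf{G})Z_{G_k}G_{k1}}{|G_k|}\le F(G)_1$. Since $i_1=i'_1+1$, each summand is then $\le2^{N}F(G)_1\prod_jF(G)_j^{i'_j}=2^{N}\prod_jF(G)_j^{i_j}$, and summing the $n$ terms closes the induction with an implied constant depending only on $\mathbf{i}$ (through $N$) and on the fixed dimension $n$. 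I expect the only genuine subtlety to be conceptual rather than computational: getting the \emph{shape} of $F(G)_j$ right — in particular the ``$+|G_k|$'' summand, which is precisely what makes (a) the lower‑order index functions $F(G)_j+G_{kj}$ produced by the product rule collapse back to $O(F(G)_j)$ and (b) the $Z$‑weight $F_k(\mathbf{G})Z_{G_k}G_{k1}/|G_k|$ thrown off by $\partial_{x_1}$ get absorbed into $F(G)_1$; the leaner guess without that term would break the induction. Equivalently, one could run the same estimate through the explicit multivariate Fa\`a di Bruno formula, bounding each term factor‑by‑factor via the relevant type, cancelling the powers of the $x_j$, and recombining the blocks of each set partition by the elementary inequality $u\le(1+u)^{|B|}$ for $|B|\ge1$; this is the same mechanism in a non‑recursive guise.
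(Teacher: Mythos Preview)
Your proposal is correct and follows essentially the same approach as the paper: induction (quantified over all outer functions $F$), chain rule on $\partial_{x_1}$, then Lemmas~\ref{lem1} and~\ref{lem2}. The paper's proof is a terse three-line sketch that stops at ``by Lemmas \ref{lem1}, \ref{lem2} and induction''; you have additionally spelled out the step the paper leaves implicit, namely why the bound $Z_F(\mathbf{G})\sum_k\frac{F_k(\mathbf{G})Z_{G_k}G_{k1}}{|G_k|}\prod_j(F(G)_j+G_{kj})^{i'_j}$ collapses back to $\ll Z_F(\mathbf{G})\prod_j F(G)_j^{i_j}$ via the two inequalities $G_{kj}\le F(G)_j$ and $\frac{F_k(\mathbf{G})Z_{G_k}G_{k1}}{|G_k|}\le F(G)_1$.
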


\begin{proof}
Induction on $(i_1,i_2,\dots, i_m)$, assume that for any $(i_1,i_2,\dots, i_m) < (k_1,k_2,\dots,k_m)$ (e.g. $i_j \leqslant k_j$ for any $j$ and $i_{j_0} < k_{j_0}$ for some $j_0$), we have that
$$x_{1}^{i_1}x_{2}^{i_2}\dots x_{m}^{i_m}\partial_{x_1}^{i_1} \dots \partial_{x_m}^{i_m}F(G(\mathbf{ x})) \ll Z_F(\mathbf{ G}(\mathbf{ x}))\prod_j\(\sum_{k}\frac{\left[F_k(\mathbf{ G}(\mathbf{ x}))Z_{G_k}(\mathbf{ x})+G_k(\mathbf{ x})\right]G_{kj}(\mathbf{ x})}{G_k(\mathbf{ x})}\)^{i_j}$$
holds for any $F, G_k$ satisfying the assumptions as above.

Without loss of generality, one can assume that $k_1>0$. Then
\begin{align*}
x_{1}^{k_1}x_{2}^{k_2}\dots x_{m}^{k_m}\partial_{x_1}^{k_1} \dots \partial_{x_m}^{k_m} F(G(\mathbf{ x})) = & x_{1}^{k_1}x_{2}^{k_2}\dots x_{m}^{k_m}\partial_{x_1}^{k_1-1} \dots \partial_{x_m}^{k_m} \left[\partial_{x_1}F(G(\mathbf{ x}))\right].\\
= & x_{1}^{k_1}x_{2}^{k_2}\dots x_{m}^{k_m}\partial_{x_1}^{k_1-1} \dots \partial_{x_m}^{k_m} \left[\sum_j(\partial_{j}F)(G(\mathbf{ x}))\partial_{x_1}G_j(\mathbf{ x})\right].\\
\ll & Z_F(\mathbf{ G}(\mathbf{ x}))\prod_j\(\sum_{k}\frac{\left[F_k(\mathbf{ G}(\mathbf{ x}))Z_{G_k}(\mathbf{ x})+G_k(\mathbf{ x})\right]G_{kj}(\mathbf{ x})}{G_k(\mathbf{ x})}\)^{i_j}
\end{align*}
by Lemmas \ref{lem1}, \ref{lem2} and induction.
 \end{proof}
 
Now, we are ready to study the weight functions $I_{L,X,D}(x,y,d)$, $H^{\pm,\pm}(m,n,h,d,q)$ and $u^{\pm}(v,h,q;d,x)$.\\ 

After changing variables in \eqref{I},
\begin{align*}
I^{\pm}_{L,X,D}(x,y,d) 
=  2 \eta_L^{\pm} \sqrt{\frac{Xx}{Ld^2}}h_0\(\frac{d}{D}\)h\(\frac{y}{X}\)\int_{0}^{\infty}\frac{h\(u\)}{2\sqrt{u}}J_{f}\(\sqrt{\frac{Xxu}{Ld^2}}\)J_{\kappa-1}\( 4 \pi \sqrt{\frac{Xyu}{d^2}}\)du.
\end{align*}

By Lemmas \ref{lem7}, \ref{lem2}, \ref{lem3} and changing variables back and forth, one has
\begin{lem} \label{lem9}
$I^{\pm}_{L,X,D}(x,y,d)$ has $(x, y, d)$-type
$$\(\min\left\{1,\(\frac{xX}{LD^2}\)^{\frac{1}{2}-2r_f}\right\}: \sqrt{\frac{xX}{LD^2}} + 1, \sqrt{\frac{yX}{D^2}} + 1+Z_h, 1+Z_h\),$$
and is support on $\mathbb{R}^{+}\times [1/2X,5/2X]\times[1/2D,5/2D]$. Moreover, it also satisfies
$$I_{L,X,D}(x,y,d)\ll_{m}\(1+Z_h\)^m\left(\sqrt{\frac{xX}{LD^2}}-\sqrt{\frac{yX}{D^2}}\right)^{-m}.$$
All the above implied constants may depend on $t_f$ and $\kappa$.
\end{lem}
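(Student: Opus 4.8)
## Proof Proposal for Lemma \ref{lem9}

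The plan is to apply the change-of-variables formula already recorded just before the statement, which expresses $I^{\pm}_{L,X,D}(x,y,d)$ as a constant of modulus $1$ times
$$
\sqrt{\tfrac{Xx}{Ld^2}}\;h_0\!\(\tfrac{d}{D}\)h\!\(\tfrac{y}{X}\)\int_{0}^{\infty}\frac{h(u)}{2\sqrt{u}}\,J_f\!\(\sqrt{\tfrac{Xxu}{Ld^2}}\)J_{\kappa-1}\!\(4\pi\sqrt{\tfrac{Xyu}{d^2}}\)\,du,
$$
and then to read off its type by combining the elementary type calculus of Lemmas \ref{lem1}, \ref{lem2}, \ref{lem3} with the Bessel integral estimates of Lemma \ref{lem7} (and Lemma \ref{lem7_1} in the Maa{\ss} case, where $\mathcal{J}^{\pm}_f$ involves $Y$- and $K$-Bessel functions rather than $J$-Bessel). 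The support statement is immediate from the supports of $h_0$ and $h$: the factor $h_0(d/D)$ forces $d\in[D/2,5D/2]$ and $h(y/X)$ forces $y\in[X/2,5X/2]$, while there is no constraint on $x$ beyond positivity, so the support is $\mathbb{R}^{+}\times[X/2,5X/2]\times[D/2,5D/2]$.

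First I would set $a\sqrt{x}=\tfrac{1}{4\pi}\sqrt{Xx/(LD^2)}$ and $b\sqrt{y}=\tfrac{1}{4\pi}\sqrt{Xy/D^2}$ (up to harmless constants depending on $\kappa$ and $t_f$), so that the $u$-integral is exactly of the shape $I(x,y)$ treated in Lemma \ref{lem7}, with $\xi=u$ and the cutoff $h$ in place of the $h$ there. Lemma \ref{lem7} then gives both the rapid decay bound in $|a\sqrt{x}-b\sqrt{y}|$ — which, after translating back, is precisely the last displayed inequality with the weight $\bigl(1+Z_h\bigr)^m$ coming from the $h^{(j)}\ll Z_h^j$ hypothesis — and the derivative bound
$$
x^iy^j\,\partial_x^i\partial_y^j\, I(x,y)\ll\frac{(1+a\sqrt{x})^i(1+b\sqrt{y})^j}{(1+a\sqrt{x})^{1/2}(1+b\sqrt{y})^{1/2}}.
$$
For small arguments one uses the Taylor-expansion bound \eqref{JT}: when $\sqrt{Xx/(LD^2)}\le 1$ the factor $J_f$ contributes $\ll(xX/LD^2)^{-2r_f}$ (recall $\Re(v)=-2r_f$ for the relevant Bessel order in the Maa{\ss} case, and the $J$-Bessel in the holomorphic case has $\Re(v)\ge1$ so is bounded by $1$ there), which together with the prefactor $\sqrt{Xx/(Ld^2)}$ produces the amplitude $\min\{1,(xX/LD^2)^{1/2-2r_f}\}$ in the type. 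The variable $d$ enters only through $h_0(d/D)$ and through $a\sqrt{x},b\sqrt{y}$, each of which is a monomial in $d$; by Lemma \ref{lem3} this contributes a $d$-frequency of size $1+Z_h$ (the $1$ from $h_0$ and the $Z_h$ inherited from differentiating the Bessel integral in its arguments, which are degree-$(-1)$ in $d$), and similarly chasing $x$ and $y$ through Lemma \ref{lem3} yields the claimed $x$- and $y$-frequencies $\sqrt{xX/LD^2}+1$ and $\sqrt{yX/D^2}+1+Z_h$.

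Finally I would assemble the three factors — the prefactor $\sqrt{Xx/(Ld^2)}$, the cutoffs $h_0(d/D)h(y/X)$, and the Bessel integral — using Lemma \ref{lem2} to multiply types, checking that the amplitudes multiply to $\min\{1,(xX/LD^2)^{1/2-2r_f}\}$ and the frequencies add to the stated values; one small point is that $\sqrt{Xx/(Ld^2)}$ has $x$-frequency $\tfrac12$ and $d$-frequency $1$, which are absorbed into the dominating terms $\sqrt{xX/LD^2}+1$ and $1+Z_h$ respectively. The main obstacle is bookkeeping rather than conceptual: one must handle the Maa{\ss} case carefully, since $\mathcal{J}^{\pm}_f$ is not a single $J$-Bessel function but the combination of $Y_{\pm 2it_f}$ (for $\mathcal{J}^+$) and $K_{2it_f}$ (for $\mathcal{J}^-$), so the small-$y$ behaviour and the $e^{-y}$ decay recorded in Remark \ref{General_J_Bessel} must be fed in via Lemma \ref{lem7_1} in place of Lemma \ref{lem7}, and one must verify that the exceptional-parameter exponent $r_f$ appears with the correct sign throughout. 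Once that case analysis is organized, the derivative bounds follow term by term exactly as in the proof of Lemma \ref{lem7}, and the stated type and support follow.
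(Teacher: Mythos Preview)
Your proposal is correct and follows exactly the approach the paper itself uses: the paper's proof is a one-line citation of Lemmas \ref{lem7}, \ref{lem2}, \ref{lem3} together with the change of variables, and your writeup is a faithful (and considerably more detailed) unpacking of precisely that argument, including the correct invocation of Lemma \ref{lem7_1} and Remark \ref{General_J_Bessel} for the Maa{\ss} case. The support claim, the decay bound, and the type computation are all handled the way the paper intends.
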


By changing variables in \eqref{H_L}, one has
\begin{align}
\label{HLL}
H^{\pm_1,\pm_2}(m,n,h,d,q) :=\iint_{0}^{\infty}F\(\frac{q^2N\xi^2}{m},\frac{q^2N\mu^2}{n},hc_0,d\)\mathcal{J}_f^{\pm_1}\(\xi\)\mathcal{J}_g^{\pm_2}\(\mu\)d\xi d\mu,
\end{align}
where 
$$F(x,y,hc_0,d)= 4 \eta_g^{\pm_1}\eta_f^{\pm_2} I\(x,y,d\)w_{\delta}\(\Delta\(x,y,hc_0\)\),$$
 $\Delta(x,y,hc_0) = l_1x-l_2y -hc_0$, and $w_{\delta}\(\Delta\) = \frac{1}{2\delta}\int_{-\delta}^{\delta}e\(\Delta x\)dx$.\\

\begin{lem} \label{lem10}
For any given integers $m,n \geqslant 0$, $H^{\pm_1,\pm_2}(x,y,h,d,q)$ has $(x, y, h, d,q)$-type
$$\(B_H(m,n): Z_1(S_1) +1, Z_2(S_2) + 1,1 , 1+Z_3, Z_1(S_1)+Z_2(S_2)+ 1\),$$
where 
$$B_H(m,n):=\min\left\{1,\(\frac{xS_1}{q^2N}\)^{\frac{1}{2}-2r_f}\right\}\min\left\{1,\(\frac{yS_2}{q^2N}\)^{\frac{1}{2}-2r_g}\right\}\left(\frac{Z_1(S_1) +1}{\sqrt{xS_1/q^2N}}\right)^m\left(\frac{Z_2(S_2) + 1}{\sqrt{yS_2/q^2N}}\right)^n$$
and $r_f, r_g$ are defined in Remark \ref{General_J_Bessel}. ( When $f$ ( resp. $g$ ) is Maa{\ss} form with $t_f = 0$ ( resp. $t_g=0$), we will have $\frac{1}{2}-\epsilon$ power instead of power $\frac{1}{2}$. )
\end{lem}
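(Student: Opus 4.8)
The plan is to split the estimate into the type of the ``amplitude'' $F(x,y,hc_0,d)=4\eta_g^{\pm_1}\eta_f^{\pm_2}\,I(x,y,d)\,w_\delta(\Delta(x,y,hc_0))$ appearing in \eqref{HLL} and the effect of integrating it against the two Bessel kernels; throughout, $x,y$ denote the variables occupying the $m,n$ slots. For the amplitude, recall from Section~\ref{se2} that $\delta=Q^{-1}$ with $Q=(|l_1l_2|S_1S_2D_1MN)^{100}$, so on the support of $I$ (where $x\asymp S_1$, $y\asymp S_2$, and $|h|c_0\ll l_1S_1+l_2S_2$) the quantities $\delta l_1 x$, $\delta l_2 y$, $\delta|h|c_0$ are all $\ll1$; hence by the chain rule $w_\delta(\Delta(x,y,hc_0))$ is bounded by $1$ and has $(x,y,h,d)$-type with all four shape parameters $\ll1$. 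Combining with the hypothesis that $I$ has $(x,y,d)$-type $(1:Z_1,Z_2,Z_3)$, Lemma~\ref{lem2} gives that $F$ has $(x,y,h,d)$-type $(1:Z_1(S_1)+1,\,Z_2(S_2)+1,\,1,\,Z_3+1)$, supported in $[S_1/2,5S_1/2]\times[S_2/2,5S_2/2]\times\mathbb{R}\times[D_1/2,5D_1/2]$.

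Next I would insert this into \eqref{HLL}. Writing $\Theta_1:=\sqrt{xS_1/q^2N}$ and $\Theta_2:=\sqrt{yS_2/q^2N}$, the support of $F$ forces the $\xi$-integration to run over $\xi\asymp\Theta_1$ and the $\mu$-integration over $\mu\asymp\Theta_2$, and by Lemma~\ref{lem3} the factor $F(q^2N\xi^2/x,\cdot,\cdot,\cdot)$ varies in $\xi$ on scale $\Theta_1/(Z_1(S_1)+1)$. I then treat two regimes. If $\Theta_1\ll1$, I bound the $\xi$-integral by its length times $\max|\mathcal{J}_f^{\pm_1}|$ using the near-origin bounds of Remark~\ref{General_J_Bessel} ($\mathcal{J}^+_f(t)\ll t^{k-1}$ or $t^{-2r_f}$, and $\mathcal{J}^-_f(t)$ a comparable negative power, exponentially small at $\infty$), which produces the factor $\min\{1,(xS_1/q^2N)^{1/2-2r_f}\}$, with the $\frac12-\varepsilon$ substitution when $t_f=0$. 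If $\Theta_1\gg1$, I substitute the oscillatory expansion \eqref{JB} (for Maass, the analogue coming from \eqref{K_integral} applied to $Y$-Bessel) and note $\mathcal{J}^-_f$ is exponentially negligible there; since the phase has frequency $\asymp1$ on an interval of length $\asymp\Theta_1$ with amplitude smooth on scale $\Theta_1/(Z_1(S_1)+1)$, repeated integration by parts together with \eqref{Wk} gains a factor $\asymp(Z_1(S_1)+1)/\Theta_1$ per step, yielding the decay $\big((Z_1(S_1)+1)/\sqrt{xS_1/q^2N}\big)^{m}$ for every $m\geq0$; the same analysis shows this $\xi$-integral, regarded as a function of $\Theta_1$, has $\Theta_1$-shape parameter $\ll Z_1(S_1)+1$. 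The $\mu$-integral is handled identically, producing the companion $y$-factors, and multiplying the two gives the weight $B_H(m,n)$.

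For the shape parameters of $H^{\pm_1,\pm_2}$ itself I would differentiate under the integral sign after the change of variables $\xi=\sqrt{x/q^2N}\,\omega$, $\mu=\sqrt{y/q^2N}\,\nu$, which makes the $x,y,q$-dependence explicit through the prefactor $\sqrt{xy}/q^2N$ and the Bessel arguments $\Theta_1\omega/\sqrt{S_1}$, $\Theta_2\nu/\sqrt{S_2}$, while $F(\omega^2,\nu^2,hc_0,d)$ no longer involves $x,y,q$. Derivatives in $h$ and $d$ then act only on $F$, so the $h$-shape parameter is $\ll1$ and the $d$-shape parameter is $\ll Z_3+1$. For $x,y,q$ one applies Lemma~\ref{lem3} with $\Theta_1,\Theta_2$ as intermediate variables — $\Theta_1$ of $x$-type $(\Theta_1:\tfrac12)$ and $q$-type $(\Theta_1:1)$, $\Theta_2$ of $y$-type $(\Theta_2:\tfrac12)$ and $q$-type $(\Theta_2:1)$ — and combines this with the $\Theta_i$-shape bounds from the previous step: this gives $x$-shape $\ll Z_1(S_1)+1$, $y$-shape $\ll Z_2(S_2)+1$, and, since $q$ enters both $\Theta_1$ and $\Theta_2$, $q$-shape $\ll Z_1(S_1)+Z_2(S_2)+1$, matching the asserted type.

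The main obstacle is the Bessel-integral analysis in the second step: keeping all bounds uniform when $\Theta_1$ or $\Theta_2$ is of moderate size, handling the exponents $r_f,r_g$ correctly across the holomorphic, generic-Maass and exceptional-Maass cases (including the borderline $t_f=0$), and verifying that the repeated integration by parts in the regime $\Theta_i\gg1$ genuinely gains — which hinges on the phase contributed by $w_\delta$ being non-oscillatory on the support of $I$, i.e.\ on the smallness of $\delta=Q^{-1}$ relative to $l_1S_1$, $l_2S_2$ and $|h|c_0$. The remaining bookkeeping — the chain rule through $\xi\mapsto\xi^2$ and through $(x,y,q)\mapsto(\Theta_1,\Theta_2)$, and assembling the final type from Lemmas~\ref{lem1}–\ref{lem3} — is routine once the one-dimensional Bessel estimates are in place.
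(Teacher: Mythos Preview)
Your proposal is correct and rests on the same ingredients as the paper's proof: the type calculus of Lemmas~\ref{lem1}--\ref{lem3} to determine the type of the amplitude $F$, followed by the near-origin/oscillatory dichotomy for the Bessel integrals with integration by parts in the large-argument regime. The one organizational difference is that you perform the change of variables $\xi=\sqrt{x/q^2N}\,\omega$, $\mu=\sqrt{y/q^2N}\,\nu$ so that the amplitude $F(\omega^2,\nu^2,hc_0,d)$ becomes independent of $(x,y,q)$; you then establish the $(\Theta_1,\Theta_2,h,d)$-type of the resulting integral and convert back to $(x,y,h,d,q)$-type via Lemma~\ref{lem3}. The paper instead differentiates directly under the integral in $(x,y,h,d,q)$, obtains the integrand $K_{\mathbf I}(\xi,\mu)=\partial_{\mathbf a}^{\mathbf I}F(\mathbf G(\mathbf b))$, and determines its $(\xi,\mu)$-type via Lemma~\ref{lem3} before bounding the integral; the $\delta$-contributions $\delta l_1 t_x,\delta l_2 t_y,\delta hc_0$ are carried explicitly in the type and only declared negligible at the end, whereas you absorb them immediately.

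One point worth making explicit in your write-up: the claim that the integral has $\Theta_1$-shape $\ll Z_1(S_1)+1$ \emph{uniformly in $m$} is not automatic from ``the same analysis''. Each application of $\Theta_1\partial_{\Theta_1}$ to $e^{i\Theta_1\omega/\sqrt{S_1}}W(\Theta_1\omega/\sqrt{S_1})$ produces an extra factor of size $\asymp\Theta_1$ from the phase, so after $j$ such derivatives you must perform $m+j$ (rather than $m$) integrations by parts in $\omega$ to recover the bound $(Z_1+1)^j\big((Z_1+1)/\Theta_1\big)^m$. This is implicit in the paper's formulation as well (it only writes out $m=1$, $n=0$ and says ``by repeating this process''), but since you package the argument through an intermediate $\Theta_1$-type it deserves a sentence.
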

 
\begin{proof}
In order to establish the $(x, y, h, d,q)$-type of 
$H^{\pm_1,\pm_2}(x,y,h,d,q)$, we need to bound 
\begin{align}
\label{pHL}
&\partial_{x}^{i_x}\partial_{y}^{i_y}\partial_{h}^{i_h}\partial_{d}^{i_d}\partial_{q}^{i_q}H^{\pm_1,\pm_2}(x,y,h,d,q) \\
& = \iint_{0}^{\infty}\partial_{x}^{i_x}\partial_{y}^{i_y}\partial_{h}^{i_h}\partial_{d}^{i_d}\partial_{q}^{i_q}F\(\frac{q^2N\xi^2}{x},\frac{q^2N\mu^2}{y},h,d\)\mathcal{J}_f^{\pm_1}\(\xi\)\mathcal{J}_g^{\pm_2}\(\mu\)d\xi d\mu.
\end{align}
First of all, we establish the type of the integrand.

Set $\mathbf{a} = (x, y,h,d,q)$. Let $\mathbf{ I} = (i_x, i_y, i_h, i_d, i_q)$ be a vector of nonnegative integers. Let $\mathbf{a}^{\mathbf{ I}} : = x^{i_x}y^{i_y}h^{i_h}d^{i_d}q^{i_q}$, $\partial_{\mathbf{a}}^{\mathbf{ I}} : =\partial_{x}^{i_x}\partial_{y}^{i_y}\partial_{h}^{i_h}\partial_{d}^{i_d}\partial_{q}^{i_q}$.

Let $$K_{\mathbf{ I}}(\xi,\mu) := \partial_{\mathbf{a}}^{\mathbf{ I}}F\(\frac{q^2N\xi^2}{x},\frac{q^2N\mu^2}{y},h,d\).$$

Let $\mathbf{b} = (\xi, \mu, x,y,h,d,q)$ be a vector, and $\mathbf{G}(\mathbf{b}) :=(G_1(\mathbf{b}), \dots, G_4(\mathbf{b}))= \(\frac{q^2N\xi^2}{x},\frac{q^2N\mu^2}{y},h,d\)$ be a $\mathbb{R}^7$-to-$\mathbb{R}^4$ map.

It is easy to check that 
$G_1(\mathbf{b})$ has $\mathbf{b}$-type $(G_1(\mathbf{b}):1, 0, 1, 0, 0, 0, 1)$. $G_2(\mathbf{b})$ has $\mathbf{b}$-type $(G_2(\mathbf{b}):0, 1, 0, 1, 0, 0, 1)$. $G_3(\mathbf{b})$ has $\mathbf{b}$-type $(G_3(\mathbf{b}):0,0,0,0,1, 0, 0)$. $G_4(\mathbf{b})$ has $\mathbf{b}$-type $(G_4(\mathbf{b}):0, 0, 0, 0, 0, 1, 0)$.

By \eqref{HLL}, $F(x,y,hc_0,d)=4 \eta_g^{\pm_1}\eta_f^{\pm_2} I\(x,y,d\)w_{\delta}\(\Delta\(x,y,hc_0\)\)$. Also note that $w_{\delta}\(\Delta\(x,y,hc_0\)\)$ has $(x,y,h)$-type $(1: \delta l_1x, \delta l_2y,\delta hc_0)$ and $I\(x,y,d\)$ has $(x,y,d)$-type $(1:Z_1,Z_2,Z_3)$. Hence by Lemma \ref{lem2}, \ref{lem9}, $F(x,y,hc_0,d)$ has $(x,y,h,d)$-type
$$\( 1 : Z_1 + \delta l_1x + 1,Z_2 + \delta l_2y+1, \delta hc_0, Z_3+1\).$$

Set 
$$F_x := Z_1(t_x)+\delta l_1t_x +1,\,  \ F_y := Z_2(t_y)+\delta l_2t_y +1,\,  \ F_h:=\delta hc_0+1,\,  \ F_d :=Z_3+1,$$ 
$$F_q :=  Z_1(t_x)+Z_2(t_y)+\delta l_1t_x +\delta l_2t_y +1, $$
where $t_x := \frac{q^2N\xi^2}{x}$, $t_y := \frac{q^2N\mu^2}{y}$.
Also set $$\mathbf{F}^{\mathbf{I}} := F_x^{i_x}F_y^{i_y}F_{h}^{i_h}F_d^{i_d}F_q^{i_q}.$$

Then, by Lemmas \ref{lem3} and \ref{lem1}, one obtains that $K_{\mathbf{ I}}(\xi,\mu)=\partial_{\mathbf{a}}^{\mathbf{I}}F(\mathbf{G}(\mathbf{b}))$ has $(\xi,\mu)$-type
$$\(\mathbf{F}^{\mathbf{I}}: F_x, F_y\).$$

Next, we study the integral in \eqref{pHL}. We will use a similar method as the one in the proof of Lemma \ref{lem7} to establish the bound for $m=1, n=0$. Note that $K_{\mathbf{ I}}(\xi,\mu)=0$ if $(t_x,t_y) \notin [1/2S_1,5/2S_1]\times [1/2S_2, 5/2S_2]$.

We proceed by considering the integral over those $(\xi, \mu)$ where $K_{\mathbf{ I}}(\xi,\mu) \neq 0$. Different techniques will be applied when $\xi \geqslant 1$ and when $0< \xi \ll 1$. 

Let $\xi \sim \sqrt{xS_1/q^2N}$ and $\mu \sim \sqrt{yS_2/q^2N}$, or $K_{\mathbf{ I}}(\xi,\mu) = 0$. We first treat the case that $\sqrt{xS_1/q^2N} \gg 1$. When $f$ is holomorphic, by \eqref{JB}, the right hand side of \eqref{pHL} becomes
\begin{align}
\iint_{0}^{\infty}K_{\mathbf{ I}}(\xi,\mu)\(e^{i\xi}W_{2t_f}(\xi)+e^{-i\xi}\overline{W}_{2t_f}(\xi)\)\mathcal{J}_g^{\pm_2}\(\mu\)d\xi d\mu.
\end{align} 
We consider $\iint_{0}^{\infty}K_{\mathbf{ I}}(\xi,\mu)e^{i\xi}W_{2t_f}(\xi)\mathcal{J}_g^{\pm_2}\(\mu\)d\xi d\mu$. The other term can be treated similarly. From \eqref{JB}, \eqref{Wk}, Remark \ref{General_J_Bessel} and integration by parts, one gets the upper bound as 
\begin{align}
\ll \min\left\{1,\(\frac{yS_2}{q^2N}\)^{\frac{1}{2}-2r_g}\right\}\left(\frac{Z_1\(S_1\) + \delta l_1S_1+1}{\sqrt{xS_1/q^2N}}\right)\mathbf{F}^{\mathbf{I}}.
\end{align} 

We then consider the case that $\sqrt{xS_1/q^2N} \ll 1$. Then by Remark \ref{General_J_Bessel}, we can bound the integral trivially as 
\begin{align}
\ll \min\left\{1,\(\frac{xS_1}{q^2N}\)^{\frac{1}{2}-2r_f}\right\}\min\left\{1,\(\frac{yS_2}{q^2N}\)^{\frac{1}{2}-2r_g}\right\}\mathbf{F}^{\mathbf{I}} 
\end{align}  

For a Maa{\ss} form $f$, we can still do integration by parts for $\mathcal{J}_f^{+}$, but using trivial bound for $\mathcal{J}_f^{-}(y)$ which is exponential decay for large values of $y$. By repeating this process and noticing that $\delta$ is negligible, we obtain the desired bound.
\end{proof}


The following lemma is a consequence of Lemmas \ref{lem1} and \ref{lem10}.
\begin{lem} \label{lem11}
Let $\widetilde{H}^{+,+}_{A,B}(m,n,h,d,q) := H^{+,+}(m,n,h,d,q)h_0\(\frac{m}{A}\)h_0\(\frac{n}{B}\)$. Set $h^{\pm}(v)$ be functions such that $h^{\pm}(v)=1$ when $\pm v\geqslant 2/3$ and $h^{\pm}(v)=0$ when $\pm v \leqslant 1/3$. Let 
$$ u^+(v,h,q;d,x) : = \frac{d}{d x}\widetilde{H}_{A,B}(\frac{v+l_1x}{l_2},x,h,d,q)h^{+}(v),$$
$$u^-(v,h,q; d,x) : = \frac{d}{d x}\widetilde{H}_{A,B}(x,\frac{l_2x-v}{l_1},h,d,q)h^{-}(v).$$

Then $u^{+}(v, h , q; d, x) = 0$ when $20Al_2 <  l_1B$. It is supported on the region such that $ v \in [0, \frac{5}{2}Al_2] $ and $x \in [\frac{1}{2}B, \frac{5}{2}B]$. Moreover, it has $(v, h, q, d)$-type
$$\(B_{u^{+}} : Z_1+1, 1, Z_1+Z_2+1, Z_3+1\),$$
where $B_{u^{+}}= \(\frac{l_1Z_1}{Al_2}+\frac{Z_2}{B}\)B_H(0,0)$.

Similarly, $u^{-}(v, h ,q; d, x) = 0$ when $Al_2 >  20Bl_1$. It is supported on the region such that $ -v \in [0,\frac{5}{2}Bl_1] $ and $x \in [\frac{1}{2}A, \frac{5}{2}A]$. Moreover, it has $(v, h, q, d)$-type
$$\(B_{u^{-}} : Z_2+1, 1, Z_1+Z_2+1, Z_3+1\),$$
where $B_{u^{-}} = \(\frac{Z_1}{A}+\frac{l_2Z_2}{Bl_1}\)B_H(0,0)$.\\
\end{lem}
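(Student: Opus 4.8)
The plan is to read off both assertions from the ``type calculus'' of Lemmas \ref{lem1}--\ref{lem3}, fed with the type of $\widetilde{H}^{+,+}_{A,B}$ supplied by Lemma \ref{lem10}. I will carry out the computation for $u^{+}$ and then obtain $u^{-}$ by the symmetric substitution that exchanges the roles of the $m$- and $n$-slots, i.e.\ of $(A,l_{1})$ and $(B,l_{2})$.

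\emph{Support.} First I would settle the support claim, which is elementary. By Lemma \ref{lem10} together with the cutoffs $h_{0}(m/A)h_{0}(n/B)$, the function $\widetilde{H}^{+,+}_{A,B}(m,n,h,d,q)$ vanishes unless $m\in[A/2,5A/2]$ and $n\in[B/2,5B/2]$. Substituting $m=(v+l_{1}x)/l_{2}$, $n=x$, and using $h^{+}(v)=0$ for $v\le 1/3$, the function $u^{+}(v,h,q;d,x)=h^{+}(v)\,\partial_{x}\widetilde{H}^{+,+}_{A,B}\bigl((v+l_{1}x)/l_{2},x,h,d,q\bigr)$ can be nonzero only if $x\in[B/2,5B/2]$, $v+l_{1}x\in[Al_{2}/2,\,5Al_{2}/2]$ and $v\ge 1/3$. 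The first two relations give $l_{1}B/2\le l_{1}x\le v+l_{1}x\le 5Al_{2}/2$, so $l_{1}B<20Al_{2}$; hence $u^{+}\equiv 0$ whenever $20Al_{2}<l_{1}B$, and on the remaining range $0\le v\le 5Al_{2}/2$ and $x\in[B/2,5B/2]$, which is exactly the claimed support. Running the same computation with $n=(l_{2}x-v)/l_{1}$, $m=x$ gives the support of $u^{-}$ and the vanishing for $l_{2}A>20Bl_{1}$.

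\emph{Type.} For the type of $u^{+}$ I would write $u^{+}=h^{+}(v)\cdot\partial_{x}\bigl(\widetilde{H}^{+,+}_{A,B}\circ\Phi\bigr)$, where $\Phi(v,x,h,d,q)=\bigl((v+l_{1}x)/l_{2},\,x,\,h,\,d,\,q\bigr)$, and proceed in four moves. (i) Combine Lemma \ref{lem10} with the product rule Lemma \ref{lem2}, noting each cutoff $h_{0}(\cdot/A)$, $h_{0}(\cdot/B)$ has type $(1:1)$ in its variable, to see that $\widetilde{H}^{+,+}_{A,B}$ has $(m,n,h,d,q)$-type with $Z$-part $B_{H}(0,0)$ and exponents $(Z_{1}+1,\,Z_{2}+1,\,1,\,Z_{3}+1,\,Z_{1}+Z_{2}+1)$ on $m\sim A$, $n\sim B$. (ii) Observe that on the support of $u^{+}$ the only nonconstant coordinate of $\Phi$, namely $(v+l_{1}x)/l_{2}$, has $(v,x,\dots)$-type $(A:\,1,\ l_{1}B/(l_{2}A),\ 0,\dots)$, because $v\partial_{v}$ of it equals $v/l_{2}\ll A$, $x\partial_{x}$ of it equals $l_{1}x/l_{2}\sim l_{1}B/l_{2}$, and all second and higher derivatives vanish. (iii) Feed this into the chain rule Lemma \ref{lem3}: the resulting $v$-, $h$-, $d$-, $q$-exponents of $\widetilde{H}^{+,+}_{A,B}\circ\Phi$ come out $\ll Z_{1}+1$, $1$, $Z_{3}+1$, $Z_{1}+Z_{2}+1$ (the cross factor $l_{1}B/(l_{2}A)$ is $\ll 1$ on the support, by the restriction found above, so it is absorbed), and the $Z$-part is still $B_{H}(0,0)$; then Lemma \ref{lem1} accounts for $\partial_{x}$, an $x$-derivative landing either on the first coordinate, contributing a factor $\ll(l_{1}/l_{2})(Z_{1}+1)/A$, or on the $x$-slot of $\widetilde{H}^{+,+}_{A,B}$, contributing a factor $\ll(Z_{2}+1)/B$, so the new $Z$-part is $\ll\bigl(\tfrac{l_{1}(Z_{1}+1)}{Al_{2}}+\tfrac{Z_{2}+1}{B}\bigr)B_{H}(0,0)=B_{u^{+}}$ up to the harmless ``$+1$''s. (iv) Multiply by $h^{+}(v)$, which has $v$-type $(1:1)$, via Lemma \ref{lem2}; this does not change the $Z$-part and leaves the exponents $(Z_{1}+1,\,1,\,Z_{1}+Z_{2}+1,\,Z_{3}+1)$ in the variables $(v,h,q,d)$, uniformly for $x$ in its support, as claimed. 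For $u^{-}$ the same four steps apply after the swap $(m,A,l_{1})\leftrightarrow(n,B,l_{2})$; in particular the $v$-exponent is now inherited from the slot $(l_{2}x-v)/l_{1}$, hence equals $Z_{2}+1$, and $B_{u^{-}}\ll\bigl(\tfrac{Z_{1}+1}{A}+\tfrac{l_{2}(Z_{2}+1)}{Bl_{1}}\bigr)B_{H}(0,0)$.

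\emph{Main obstacle.} The only step needing care is the bookkeeping in (ii)--(iii): in $\Phi$ two coordinates depend on $x$, so in the chain rule I must track which slot each derivative hits, and I need the cross term $l_{1}B/(l_{2}A)$ produced by differentiating the first coordinate to be $\ll 1$ so that it can be absorbed. This is exactly what the support constraint $20Al_{2}\ge l_{1}B$ guarantees, which is why the support analysis must be done first. Everything else is a mechanical application of Lemmas \ref{lem1}--\ref{lem3} together with the bounds of Lemma \ref{lem10}.
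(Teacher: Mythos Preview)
Your proposal is correct and follows exactly the route the paper intends: the paper's own proof is the single sentence ``The following lemma is a consequence of Lemmas \ref{lem1} and \ref{lem10},'' and your argument simply unpacks that sentence, additionally invoking Lemmas \ref{lem2} and \ref{lem3} (which are clearly also needed for the cutoffs and the composition $\Phi$). Your observation that the support constraint $20Al_{2}\geqslant l_{1}B$ is precisely what makes the cross factor $l_{1}B/(l_{2}A)$ absorbable is the only nontrivial point, and you have identified it correctly.
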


\section{The Proof of Theorem \ref{pro1}}\label{seLS}
In this section, we prove the large sieve inequality Theorem \ref{pro1}. 
Our arguments are motivated by ideas demonstrated in \cite{B1}, \cite{DI} and \cite{P1}.

\begin{proof}[Proof of Theorem \ref{pro1}]
Consider the case of $\mathfrak{S}_{+}$. The treatment of $\mathfrak{S}_{-}$ is similar.\\

As in \cite{DI}, we consider the Fourier transform of $u$ as
\begin{align*}
G(t_1, t_2,t_3; x) = \iiint_{\mathbb{R}^3} u\left(x_1,x_2,\frac{4\pi\sqrt{x_1x_2w}}{s\sqrt{r}x},x_3\right)e(-t_1x_1-t_2x_2-t_3x_3)dx_1dx_2dx_3,
\end{align*}
so that
$$u\left(x_1,x_2,\frac{4\pi\sqrt{x_1x_2w}}{s\sqrt{r}x},x_3\right) = \iiint_{\mathbb{R}^3} G(t_1,t_2,t_3; x)e(t_1x_1+t_2x_2+t_3x_3)dt_1dt_2dt_3.$$

Furthermore, one obtains
\begin{align*}
& \frac{\partial^p}{\partial x^p}G(t_1, t_2,t_3; x)\\
 = & (2\pi it_1)^{p_1}(2\pi it_2)^{p_2}(2\pi it_3)^{p_3}\iiint_{\mathbb{R}^3} \frac{\partial^{p_1+p_2+p_3+p}}{\partial x_1^{p_1}\partial x_2^{p_2}\partial x_3^{p_3}\partial x^p}u\left(x_1,x_2,\frac{4\pi\sqrt{x_1x_2w}}{s\sqrt{r}x},x_3\right)\\
 &\times e(-t_1x_1-t_2x_2-t_3x_3)dx_1dx_2dx_3 \\
 \ll & (t_1V/Z)^{-p_1}(t_2H/Z)^{-p_2}(t_3D/Z)^{-p_3}(\sqrt{VHw}Z/s\sqrt{r}Q)^{-p}VH
\end{align*}
by integration by parts. Also note that $G(t_1, t_2,t_3; x)$ is compactly supported in terms of $x$.

Thus,
\begin{align}
& \sum_{\substack{q\\(q,r)=1}}\sum_{h,d}\sum_{v}\frac{1}{q}S(v\overline{r},  hw, sq)a(v)b(h,d)u(v,h,q,d) \label{11}\\
= & \iiint_{\mathbb{R}^3}\sum_{\substack{q\\(q,r)=1}}\sum_{h,d}\sum_{v}\frac{1}{q}S(v\overline{r},  hw, sq)a(v)b(h,d)G\(t_1,t_2,t_3; \frac{4\pi\sqrt{vhw}}{q}\)\\
& \times e(t_1v+t_2h+t_3d)dt_1dt_2dt_3.
\end{align}

We now study the integrand 
\begin{equation}\label{integrant}
\sum_{\substack{q\\(q,r)=1}}\sum_{h,d}\sum_{v}\frac{1}{q}S(v\overline{r},  hw, sq)a(v)b(h,d)G\(t_1,t_2,t_3; \frac{4\pi\sqrt{vhw}}{q}\)e(t_1v+t_2h+t_3d),
\end{equation}
saving integration over $t_1, t_2, t_3$ for later.

Let $\Xi=\sqrt{VHw}/s\sqrt{r}Q$, and for fixed $p_1, p_2, p_3$, define
$$\varphi\(\frac{4\pi\sqrt{vhw}}{s\sqrt{r}q}\) := (DVH)^{-1}(t_1V/Z)^{p_1}(t_2H/Z)^{p_2}(t_3D/Z)^{p_3}G\(t_1, t_2,t_3; \frac{4\pi\sqrt{vhw}}{s\sqrt{r}q}\).$$
So that $\varphi(x)$ is supported on $[\Omega^{-1}\Xi, \Omega \Xi]$ for some absolute positive number $\Omega$. Moreover, 
$$\varphi^{(i)}(x)\ll (Z/\Xi)^i.$$

we absorb the $e(t_1 v+ t_2 h + t_3 d)$ factor into $a(v)$, $b(h,d)$ and pull out a factor 
$e\left(v \frac{\bar{s}}{r}\right)$ from $a(v)$. Since these all have norm $1$, this will not affect our final bound. Therefore, it suffices to bound 
\begin{equation}\label{integrant_2}
\sum_{h,d}\sum_{v}a(v)e\(v\frac{\bar{s}}{r}\)b(h,d)\sum_{\substack{q>0 \\ (q,r)=1}} \frac{1}{s\sqrt{r}q} S(v\overline{r}, hw ; sq) \varphi(\frac{4\pi \sqrt{vhw}}{s\sqrt{r}q})
\end{equation}

In the trace formula (Lemma \ref{lemKT}), we consider the case of $\Gamma =\Gamma_0(rs)$ with cusps $\mathfrak{a}$, $\mathfrak{b}$ such that $\mathfrak{a} = 1/s$, and $\mathfrak{b} = \infty \sim 1/rs$. Then, as defined in Lemma \ref{lem10}, we have that $\mu(\mathfrak{b}) = (1,rs)/rs = 1/rs$ and $\mu(\mathfrak{a}) = (s,r)/rs = 1/rs$. 

Then, by \cite{DI} (1.6), we can rewrite \eqref{integrant_2} as
\begin{align}
\mathcal{S}:=\sum_{h,d}\sum_{v}a(v)b(h,d)\sum_{\gamma}^{\Gamma} \frac{1}{\gamma} S_{\mathfrak{ab}}(v, hw ; \gamma) \varphi\(\frac{4\pi \sqrt{vhw}}{\gamma}\).
\end{align}

Applying Lemma \ref{lemKT}, the sum above equals
\begin{align}
& \frac{1}{2\pi}\sum_{k = 0(2)}\sum_{j}\frac{i^k(k-1)!}{(4\pi)^{k-1}}\sum_{v,h,d}a(v)b(h,d)\overline{\psi}_{jk}(\mathfrak{a},v)\psi_{jk}(\mathfrak{b},hw)\tilde{\varphi}(k-1)\\
& + \sum_{j \geqslant 1} \sum_{v,h,d}a(v)b(h,d)\frac{\overline{\rho}_{j\mathfrak{a}}(v)\rho_{j\mathfrak{b}}(hw)}{\cosh (\pi t_j)}\hat{\varphi}(t_j)\\
& + \frac{1}{\pi}\sum_{j}\int_{-\infty}^{\infty}\sum_{v,h,d}a(v)b(h,d)\left(\frac{v}{hw}\right)^{-it}\overline{\varphi}_{j\mathfrak{a}}\left(v,\frac{1}{2}+it\right)\varphi_{j\mathfrak{b}}\left(hw,\frac{1}{2}+it\right)\hat{\varphi}(t)dt.
\end{align}

Applying Cauchy's inequality, we get that
\begin{align}
& \mathcal{S}^2 \\
 \ll & \sum_{j \geqslant 1}\left|\frac{\hat{\varphi}(t_j)}{\cosh (\pi t_j)}\right|\left|\sum_{v}a(v)\rho_{j\mathfrak{a}}(v)\right|^2\sum_{j \geqslant 1}\left|\frac{\hat{\varphi}(t_j)}{\cosh (\pi t_j)}\right|\left|\sum_{h,d} b(h,d)\overline{\rho}_{j\mathfrak{b}}(hw)\right|^2\\
& + \sum_{k = 0(2)}|\tilde{\varphi}(k-1)|\frac{(k-1)!}{(4\pi)^{k-1}}\sum_{j}\left|\sum_{v}a(v)\overline{\psi}_{jk}(\mathfrak{a},v)\right|^2
 \sum_{k = 0(2)}|\tilde{\varphi}(k-1)|\frac{(k-1)!}{(4\pi)^{k-1}}\sum_{j}\left|\sum_{h,d}b(h,d)\overline{\psi}_{jk}(\mathfrak{b},hw)\right|^2\\
& + \sum_{j}\int_{-\infty}^{\infty}|\hat{\varphi}(t)|\left|\sum_v a(v)\left(v\right)^{-it}\overline{\varphi}_{j\mathfrak{a}}\left(v,\frac{1}{2}+it\right)\right|^2dt \sum_{j}\int_{-\infty}^{\infty}|\hat{\varphi}(t)|\left|\sum_{h,d}b(h,d)(hw)^{it}\varphi_{j\mathfrak{b}}\left(hw,\frac{1}{2}+it\right)\right|^2dt.
\end{align}

We will bound
$$\sum_{j \geqslant 1}\left|\frac{\hat{\varphi}(t_j)}{\cosh (\pi t_j)}\right|\left|\sum_{v}a(v)\rho_{j\mathfrak{a}}(v)\right|^2 \text{ , }\sum_{j \geqslant 1}\left|\frac{\hat{\varphi}(t_j)}{\cosh (\pi t_j)}\right|\left|\sum_{h,d}b(h,d)\overline{\rho}_{j\mathfrak{b}}(hw)\right|^2.$$
Consider the sum over $v$ first. Set $T=Z+2\Xi+1$. We have
\begin{align}
& \sum_{j \geqslant 1}\left|\frac{\hat{\varphi}(t_j)}{\cosh (\pi t_j)}\right|\left|\sum_{v}a(v)\rho_{j\mathfrak{a}}(v)\right|^2 \\
= & 
\sum_{|t_j| \leqslant T} \left|\frac{\hat{\varphi}(t_j)}{\cosh (\pi t_j)}\right|\left|\sum_{v}a(v)\rho_{j\mathfrak{a}}(v)\right|^2 + \sum_{|t_j| > T} \left|\frac{\hat{\varphi}(t_j)}{\cosh (\pi t_j)}\right|\left|\sum_{v}a(v)\rho_{j\mathfrak{a}}(v)\right|^2.
\end{align}
From Lemma \ref{lemB} and \ref{lem13}, we have 
$$\sum_{|t_j| < T}\left|\frac{\hat{\varphi}(i\kappa_j)}{\cosh (\pi t_j)}\right|\left|\sum_{v}a(v)\rho_{j\mathfrak{a}}(v)\right|^2 \ll
\left(\frac{1 + \log\(\frac{\Xi}{Z}\)+ \left(\frac{\Xi}{Z}\right)^{-2\theta}}{{1+\frac{\Xi}{Z}}}\right)\left(T^2+ \frac{V^{1+\varepsilon}}{rs}\right)\|a(v)\|^2. $$
For the second term, we split the sum dyadically to obtain that
\begin{align}
\sum_{|t_j| > T}\left|\frac{\hat{\varphi}(i\kappa_j)}{\cosh (\pi t_j)}\right|\left|\sum_{v}a(v)\rho_{j\mathfrak{a}}(v)\right|^2 \ll & \sum_{i>0}\left(\frac{1}{2}\right)^{2i}\left(\frac{Z}{T}\right)^{4}\left(\frac{1}{T^{1/2}}+\frac{\Xi(1+\log T)}{T}\right) \left(T^2+ \frac{V^{1+\varepsilon}}{rs}\right)\|a(v)\|^2\\
\ll & \left(\frac{Z}{T}\right)^{4}\left(\frac{1}{T^{1/2}}+\frac{\Xi(1+\log T)}{T}\right) \left(T^2+ \frac{V^{1+\varepsilon}}{rs}\right)\|a(v)\|^2.
\end{align}
Therefore, by noticing that $Z\geqslant 1$, we have the bound
\begin{align}
\label{large sieve 1}
& \sum_{j \geqslant 1}\left|\frac{\hat{\varphi}(t_j)}{\cosh (\pi t_j)}\right|\left|\sum_{v}a(v)\rho_{j\mathfrak{a}}(v)\right|^2
 \ll  
 \left(\frac{1 + \left(\frac{\Xi}{Z}\right)^{-2\theta}}{{Z+X}}\right) \left(Z^2+\Xi^2+ \frac{V^{1+\varepsilon}}{rs}\right)\|a(v)\|^2Z(\Xi Z)^{\epsilon}.
\end{align}

Next, we estimate the sum over $n$. For this term, we need the arithmetic property of Hecke-eigenvalues
\begin{align}
\rho_{j\infty}(hw) = \sum_{u|(h,w)}\rho_{j\infty}\(\frac{h}{u}\)\lambda_{j}\(\frac{w}{u}\)\label{51}
\end{align}
when $(w,rs) = 1$.

Thus, one can estimate this sum via a similar way as 
\begin{align}
 &\sum_{j \geqslant 1}\left|\frac{\hat{\varphi}(t_j)}{\cosh (\pi t_j)}\right|\left|\sum_{h,d}b(h,d)\overline{\rho}_{j\mathfrak{b}}(hw)\right|^2\label{52}
 =  \sum_{j \geqslant 1}\left|\frac{\hat{\varphi}(t_j)}{\cosh (\pi t_j)}\right|\left|\sum_{h,d}b(h,d)\sum_{u|(h,w)}\overline{\rho}_{j\mathfrak{b}}\(\frac{h}{u}\)\overline{\lambda}_{j}\(\frac{w}{u}\)\right|^2\notag\\
\ll & \sum_{j \geqslant 1}\left|\frac{\hat{\varphi}(t_j)}{\cosh (\pi t_j)}\right|\sum_{u|w}\left|\overline{\lambda}_{j}\(\frac{w}{u}\)\right|^2\sum_{u|w}\left|\sum_{h \equiv 0(\text{ mod }u)}\sum_db(h,d)\overline{\rho}_{j\mathfrak{b}}\(\frac{h}{u}\)\right|^2\notag\\
\ll &\left(\frac{1 + \left(\frac{\Xi}{Z}\right)^{-2\theta}}{{Z+\Xi}}\right) \left(Z^2+\Xi^2+ \frac{H^{1+\varepsilon}}{rs}\right)w^{2\theta}\|B(h)\|^2Z(\Xi Zw)^{\epsilon}.
\end{align}
For the last step, we used \eqref{large sieve 1} and the Kim-Sarnak bound $|\lambda(w)| \leqslant \tau(w)w^{\theta}$ (see \cite{K}).


The holomorphic case is similar. In order to treat the case of Eisenstein series, we need the Heck eigenvectors of space generated by incomplete Eisenstein series. As in Remark \ref{Eis}, we refer to the notation in \cite{GJ}, where we have a basis of Eisenstein series indexed by a finite set
$$\{(t, \chi_1,\chi_2,b) | t\in \mathbb{R}, \chi_1\chi_2 = 1, b \in \mathcal{B}(\chi_1,\chi_2)\}.$$

Furthermore, as explained in section 2.6-2.8 \cite{BH} and in \cite{BHM1} , the Hecke multiplicativity of coefficients of Eisenstein series and the large sieve inequality also hold. As a consequence, a similar bound holds true for this part as well. We can also bound the continuous part via a direct computation (see \cite{BHM} Section 3.2).

Thus, one can get the final bound of \eqref{integrant_2} as
\begin{align}
 \left(\frac{1+\left(\frac{\Xi}{Z}\right)^{-2\theta}}{Z+\Xi}\right)\left(Z+\Xi+ \frac{V^{1/2+\varepsilon}}{(rs)^{1/2}}\right)\left(Z+\Xi + \frac{H^{1/2+\varepsilon}}{(rs)^{1/2}}\right)w^{\theta}\|a(v)\|_2\|B(h)\|_2Z^2(w\Xi Z)^{\varepsilon}. \label{fLS}
\end{align}

Now assume that
$$B(\Xi,Z,V,H,Q) =  \left(\frac{1+\left(\frac{\Xi}{Z}\right)^{-2\theta}}{Z+\Xi}\right)\left(Z+\Xi+ \frac{V^{1/2+\varepsilon}}{(rs)^{1/2}}\right)\left(Z+\Xi + \frac{H^{1/2+\varepsilon}}{(rs)^{1/2}}\right)w^{\theta}(w\Xi Z)^{\varepsilon},$$

For fixed $t_1,t_2,t_3$, using the bound \eqref{fLS}, \eqref{11} is bounded above by
\begin{align*}
& \iiint DVH(t_1V/Z)^{-p_1}(t_2H/Z)^{-p_2}(t_3D/Z)^{-p_3}B(\Xi,Z,V,H,Q)Z^2\\
&\times\left|\sum_v|a(v)|^2\right|^{1/2}\left|\sum_h|\sum_db(h,d)e(t_3d)|^2\right|^{1/2} dt_1dt_2dt_3\\
\ll_{\epsilon} & B(\Xi,Z,V,H,Q)(1+Z^8)\|a(v)\|\|B(h)\|.
\end{align*}
In the last step, we chose $p_1 = p_2 =p_3=0$ when $t_1<ZV^{-1-\epsilon}$, $t_2<ZH^{-1-\epsilon}$, $t_3<ZD^{-1-\epsilon}$ and $p_1 = p_2 = p_3=2$ otherwise. Therefore, we complete the proof.
\end{proof}

\end{document}